\numberwithin{equation}{section}
\newcommand{\stirling}[2]{\genfrac{[}{]}{0pt}{}{#1}{#2}}
\newcommand{\stirlingsec}[2]{\genfrac{\{}{\}}{0pt}{}{#1}{#2}}
\newcommand{\stirlingb}[2]{B\hspace*{-0.2mm}\big[{#1},{#2}\big]}
\newcommand{\stirlingsecb}[2]{B\hspace*{-0.2mm}\big\{{#1},{#2}\big\}}
\newcommand{\N}{\mathbb{N}}
\newcommand{\R}{\mathbb{R}}
\newcommand{\1}{\mathbbm{1}}
\newcommand{\eps}{\varepsilon}
\newcommand*\xbar[1]{%
   \hbox{%
     \vbox{%
       \hrule height 0.5pt 
       \kern0.25ex
       \hbox{%
         \kern-0.05em
         \ensuremath{#1}%
         \kern-0.1em
       }%
     }%
   }%
}
\DeclareMathOperator{\E}{\mathbb{E}}
\DeclareMathOperator{\lin}{lin}
\DeclareMathOperator{\conv}{conv}
\DeclareMathOperator{\pos}{pos}
\DeclareMathOperator{\relint}{relint}
\newcommand{\bE}{\mathbb{E}}
\newcommand{\bP}{\mathbb{P}}
\newcommand{\cA}{\mathcal{A}}
\newcommand{\cC}{\mathcal{C}}
\newcommand{\cF}{\mathcal{F}}
\def\dint{\textup{d}}
\renewcommand{\P}{\mathbb{P}}
\newcommand{\aff}{\mathop{\mathrm{aff}}\nolimits}
\newcommand{\eqdistr}{\stackrel{d}{=}}
\newcommand{\bsl}{\backslash}
\newcommand{\ind}{\mathbbm{1}}
\theoremstyle{plain}
\newtheorem{theorem}{Theorem}[section]
\newtheorem{lemma}[theorem]{Lemma}
\newtheorem{corollary}[theorem]{Corollary}
\theoremstyle{definition}
\theoremstyle{remark}
\newtheorem{remark}[theorem]{Remark}
\begin{document}

\author{Thomas Godland}
\address{Thomas Godland: Institut f\"ur Mathematische Stochastik,
Westf\"alische Wilhelms-Universit\"at M\"unster,
Orl\'eans-Ring 10,
48149 M\"unster, Germany}
\email{thomas.godland@uni-muenster.de}

\author{Zakhar Kabluchko}
\address{Zakhar Kabluchko: Institut f\"ur Mathematische Stochastik,
Westf\"alische {Wilhelms-Uni\-ver\-sit\"at} M\"unster,
Orl\'eans--Ring 10,
48149 M\"unster, Germany}
\email{zakhar.kabluchko@uni-muenster.de}

\title[Positive hulls of random walks and bridges]{Positive hulls of random walks and bridges}

\keywords{
Random polyhedral cones, positive hulls, random walks, random bridges, $f$-vector, conic intrinsic volumes, conic quermassintegrals, Weyl chambers, Stirling numbers, exchangeability}

\subjclass[2010]{Primary:
52A22, 60D05.
Secondary:
11B73, 51F15, 52B05,  52A55}

\thanks{TG and ZK acknowledge support by the German Research Foundation under Germany's Excellence Strategy  EXC 2044 -- 390685587, Mathematics M\"unster: Dynamics - Geometry - Structure  and by the DFG priority program SPP 2265 Random Geometric Systems.}

\begin{abstract}
We study random convex cones defined as positive hulls of $d$-dimensional random walks and bridges.
We compute expectations of various geometric functionals of these cones such as the number of $k$-dimensional faces and the sums of conic quermassintegrals  of their $k$-dimensional faces.
These expectations are expressed in terms of Stirling numbers of both kinds and their $B$-analogues.
\end{abstract}

\maketitle
\tableofcontents

\section{Statement of the problem}
\label{sec:intro}

\subsection{Introduction}
A \textit{polyhedral cone} is an intersection of finitely many closed half-spaces in $\R^d$ whose bounding hyperplanes pass through the origin.  In other words,  polyhedral cones, (or  just \textit{cones}, for the purpose of the present paper) are sets of solutions to finite systems of linear homogeneous inequalities.  As such, they are fundamental objects appearing  in many areas of applied mathematics.  Equivalently, one can define a cone to be a \textit{positive hull} of finitely many vectors $x_1,\ldots,x_n\in \R^d$, denoted by
\begin{align*}
\pos\{x_1,\ldots,x_n\}=\{\lambda_1x_1+\ldots+\lambda_nx_n:\lambda_1,\ldots,\lambda_n\ge 0\}.
\end{align*}

We will be interested in \textit{random} cones.
The probably simplest way to construct such a  cone is to select $n$ independent random points $X_1,\ldots,X_n$ according to some probability density on $\R^d$ and to define
$$
P_n := \pos \{X_1,\ldots,X_n\}.
$$
A remarkable result of Wendel~\cite{jW62}, see also~\cite[Theorem~8.2.1]{SW08}, states that if the density of the $X_i$'s is symmetric with respect to the origin, then
$$
\P[P_n \neq \R^d] = \frac{1}{2^{n-1}} \sum_{k=0}^{d-1} \binom{n-1}{k}.
$$
For example, in the special case  with $n=4$ points in dimension $d=3$, this formula implies the value $7/8$ for the probability that a random tetrahedron with four vertices $X_1,\ldots,X_4$ does not contain the origin.  An elementary and elegant discussion of this special case can be found in~\cite{youtube}. Random cones of the form $P_n$, along with several other random cones related to $P_n$ by conditioning and taking the dual, were further studied by Cover and Efron~\cite{CoverEfron}, Donoho and Tanner~\cite{Donoho2009}, and Hug and Schneider~\cite{HugSchneider2016}. These authors computed explicitly expected values of some basic quantities associated with $P_n$ including the number of $k$-dimensional faces $f_k(P_n)$, the solid angle $\alpha(P_n)$, the conic intrinsic volumes $\upsilon_k(P_n)$, the conic quermassintegrals $U_k(P_n)$, and more general geometric functionals $Y_{m,l}(P_n)$ whose definition will be recalled below. More recently, asymptotic properties and threshold phenomena for these random cones as $n,d\to\infty$ have been investigated in~\cite{HugSchneiderThresholdPhenomena,HugSchneiderThresholdPhenomenaPart2,GKT2020_HighDimension1}.

In this paper we will be interested in random cones of the form $\pos\{S_1,\ldots,S_n\}$, where $S_k:= X_1+\ldots+X_k$ with $1\leq k\leq n$  is a random walk or a random bridge in $\R^d$ with increments $X_1,\ldots,X_n$.
Under appropriate assumptions on the increments, the probability that $\pos\{S_1,\ldots,S_n\} \neq  \R^d$ has been computed explicitly in the paper~\cite{KVZ15}; see~\eqref{eq:non-abs_prob_bridge} and~\eqref{eq:non-abs_prob_walk} below. In the one-dimensional case, this recovers a classical formula of Sparre Andersen~\cite[Theorem~1]{sparre_andersen_number_positive}  for the probability that a one-dimensional random walk (or bridge) does not change the sign. Furthermore, in~\cite{KVZ17}  an explicit formula for the expected $f$-vector of the \textit{convex} hulls of the form $\conv\{S_1,\ldots,S_n\}$ has been derived. Earlier, this type of question has been studied by Vysotsky and Zaporozhets~\cite{vysotsky_zaporozhets} who computed (among other results) the probability that $\pos\{S_1,\ldots,S_n\} \neq  \R^2$ for a random walk in dimension $d=2$; see also~\cite{snyder_steele} and~\cite{barndorff_nielsen_baxter} for related works.
The aim of the present paper is to derive explicit formulas for the expected values of several geometric functionals of the cones of the form $\pos\{S_1,\ldots,S_n\}$ including their expected $f$-vector and their solid angle, to mention just the simplest examples.  As it turns out, all these formulas, which we will state in Section~\ref{sec:expectations_main_results}, are distribution-free meaning that they only require certain symmetry and general position assumptions on the distribution of the increments.

One of the main observations made in~\cite{KVZ15} is a connection between positive (and convex) hulls of random walks, on the one side,  and Weyl chambers, on the other side. More specifically, random walks correspond, in a certain sense,  to Weyl chambers of type $B$, while random bridges correspond to Weyl chambers of type $A$.   This allows to restate probabilistic questions in terms of Weyl chambers and reflection arrangements. While random walks and bridges are \textit{partial sums} of random variables,  the positive hulls of the \textit{differences} of random variables, namely $\pos\{X_1-X_2,X_2-X_3,\ldots,X_{n-1}-X_n\}$ (in the $A$-case) and $\pos\{X_1-X_2,X_2-X_3,\ldots,X_{n-1}-X_n,X_n\}$ (in the $B$-case) also exhibit a distribution-free behavior and can also be interpreted in terms of Weyl chambers, as we have shown in~\cite{GK2020_WeylCones}. To summarize, we have the following types of random cones:
\begin{itemize}
\item [$(A+)$:] $\pos\{S_1,\ldots,S_n\}$, where $S_k=X_1+\ldots+X_k$, $1\leq k\leq n$, form a random bridge with $S_n=0$.
\item [$(B+)$:] $\pos\{S_1,\ldots,S_n\}$, where $S_k=X_1+\ldots+X_k$, $1\leq k\leq n$, form a random walk.
\item [$(A-)$:] $\pos\{X_1-X_2,X_2-X_3,\ldots,X_{n-1}-X_n\}$.
\item [$(B-)$:] $\pos\{X_1-X_2,X_2-X_3,\ldots,X_{n-1}-X_n, X_n\}$.
\end{itemize}
Together with any of these random cones, one may consider $3$ more random cones differing from the respective positive hull by conditioning it to be non-degenerate,  taking the dual,  or both. Altogether, we have $4$ random cones of each type.
The types $A-$ and $B-$ have been extensively studied in~\cite{GK2020_WeylCones}, while the types $A+$ and $B+$ are the main subject of the present paper. The connection between all these random cones and the Weyl chambers will be explained in more detail in Section~\ref{sec:weyl_cones}.


\subsection{Positive hulls of random walks and bridges} \label{subsec:weyl_cones_second kind}
Let us define the random cones we are interested in.


\subsubsection*{Type $A+$}
Let $X_1,\ldots,X_n$ be (possibly dependent) random vectors in $\R^d$. Consider their partial sums $S_1,\ldots,S_n$ defined by $S_k:=X_1+\ldots+X_k$ for $k\in\{1,\ldots,n\}$. Denote by $\text{Sym}(n)$ the group of all permutations of the set $\{1,\ldots,n\}$. We suppose that $X_1,\ldots,X_n$ and their partial sums satisfy the following conditions:
\begin{enumerate} [leftmargin=50pt, topsep=10pt]
\item[(Ex)]\emph{Exchangeability}: For every permutation $\sigma\in\text{Sym}(n)$, we have that
\begin{align*}
(X_{\sigma(1)},\ldots,X_{\sigma(n)})\eqdistr (X_1,\ldots,X_n).
\end{align*}
\item[(Br)]  \emph{Bridge property}: With probability $1$, it holds that $S_n=X_1+\ldots+X_n=0$.\\
\item[(GP')] \emph{General position}: $n\ge d+1$ and, for every $1\le i_1<\ldots<i_d\le n-1$, the vectors $S_{i_1},\ldots,S_{i_d}$ are linearly independent with probability $1$.
\end{enumerate}
Then, we say that $S_1,\ldots,S_n$ define a \textit{random bridge} in $\R^d$. A sufficient condition for the general position assumption (GP'), which is rather mild, will be stated  in Lemma~\ref{lemma:sufficent_condition}.
The random cone $C_n^A$ is defined as the positive hull of this random bridge:
\begin{align*}
C_n^A:=\pos\{S_1,\ldots,S_{n-1}\}.
\end{align*}

The probability of the event $\{C_n^A\neq\R^d\}$  coincides with the probability that the \textit{convex} hull of a random bridge $S_1,\dots,S_{n}$ whose increments $X_1,\dots,X_n$ satisfy (Ex), (Br) and (GP') contains $0$.  This follows from a classical result~\cite[p.~167]{Zeigler_LecturesOnPolytopes}; see also~\cite[Lemma~2.13]{GK2020_WeylCones}.   By~\cite[Theorem~2.1]{KVZ15} we have
\begin{align}
\P[C_n^A\neq\R^d]&=\P[0\notin\conv\{S_1,\ldots,S_{n-1}\}]=\frac{2}{n!}\bigg(\stirling n{d}+\stirling n{d-2}+\ldots\bigg),\label{eq:non-abs_prob_bridge}\\
\P[C_n^A=\R^d]&=\P[0\in\conv\{S_1,\ldots,S_{n-1}\}]=\frac{2}{n!}\bigg(\stirling n{d+2}+\stirling n{d+4}+\ldots\bigg),\label{eq:abs_prob_bridge}
\end{align}
where the $\stirling nk$'s are the (signless) \textit{Stirling numbers of the first kind}. By definition, $\stirling nk$ is  the number of permutations of the set $\{1,\dots,n\}$ having exactly $k$ cycles. Equivalently, the numbers $\stirling nk$  can be defined as the coefficients of the polynomial
\begin{align}\label{eq:def_stirling1_polynomial}
t(t+1)\cdot\ldots \cdot (t+n-1)=\sum_{k=0}^n\stirling{n}{k}t^k
\end{align}
for $n\in\N_0$, with the convention that $\stirling{n}{k}=0$ for $n\in \N_0$, $k\notin\{0,\ldots,n\}$, and $\stirling{0}{0} = 1$.
We remark that a consequence of~\eqref{eq:non-abs_prob_bridge} and~\eqref{eq:abs_prob_bridge} is that $\lim_{n\to\infty}\P[C_n^A=\R^d]=1$.

\subsubsection*{Type $B+$}
In the $B$-case, we assume that the random vectors $X_1,\ldots,X_n$ in $\R^d$ and their partial sums $S_1,\ldots,S_n$ satisfy the following conditions:
\begin{enumerate}[leftmargin=50pt, topsep=10pt]
\item[($\pm$Ex)]\emph{Symmetric exchangeability}: For every permutation $\sigma\in\text{Sym}(n)$ and every vector of signs $\eps=(\eps_1,\ldots,\eps_n)\in\{\pm 1\}^n$, we have that
\begin{align*}
(\eps_1X_{\sigma(1)},\ldots,\eps_nX_{\sigma(n)})\eqdistr (X_1,\ldots,X_n).
\end{align*}
\item[(GP)] \emph{General position}:  $n\ge d$ and, for every $1\le i_1<\ldots<i_d\le n$, the vectors $S_{i_1},\ldots,S_{i_d}$ are linearly independent with probability $1$.
\end{enumerate}
Under these assumptions, we refer to $S_1,\ldots,S_n$ as a symmetric \textit{random walk} in $\R^d$. Then, the random cone $C_n^B$ is defined as
\begin{align*}
C_n^B:=\pos\{S_1,\ldots,S_n\}.
\end{align*}

Again, the probability of the event $\{C_n^B\neq\R^d\}$  coincides with the so-called non-absorption probability for the convex hull of a random walk $S_1,\dots,S_n$.
By~\cite[Theorem~2.3]{KVZ15} we have
\begin{align}
\P[C_n^B\neq\R^d]&=\P[0\notin\conv\{S_1,\ldots,S_n\}]=\frac{2}{2^nn!}\sum_{r=0}^\infty \stirlingb n{d-2r-1},\label{eq:non-abs_prob_walk}\\
\P[C_n^B=\R^d]&=\P[0\in\conv\{S_1,\ldots,S_n\}]=\frac{2}{2^nn!}\sum_{r=0}^\infty \stirlingb n{d+2r+1}\label{eq:abs_prob_walk},
\end{align}
where the $\stirlingb nk$'s are the $B$-analogues to the Stirling numbers of the first kind defined  as the coefficients of the polynomial
\begin{align}\label{eq:def_stirling1b}
(t+1)(t+3)\cdot\ldots \cdot (t+2n-1)=\sum_{k=0}^n\stirlingb{n}{k}t^k
\end{align}
for $n\in\N_0$, with the convention that $\stirlingb{n}{k}=0$ for $n\in \N_0$, $k\notin\{0,\ldots,n\}$, and $\stirlingb{0}{0} = 1$. These numbers appear as entry A028338 in~\cite{sloane}. 

\subsubsection*{Dual and conditioned cones}
Together with the random cones $C_n^A$ and $C_n^B$ we also  consider their duals $(C_n^A)^\circ$ and $(C_n^B)^\circ$, as well as these four cones conditioned on the event that they are non-degenerate (that is, not equal to $\R^d$, for the original cones, or $\{0\}$, for their duals).
We will explain the construction in the $A$-case because the $B$-case is similar.
The dual cone $(C_n^A)^\circ$ of $C_n^A$ is defined by
\begin{align*}
(C_n^A)^\circ
=\{v\in\R^d:\langle v,x\rangle\le 0\text{ for all } x\in C_n^A\}
=\{v\in\R^d:\langle v,S_i\rangle\le 0 \text{ for all } i=1,\ldots,n-1\}.
\end{align*}
We denote by $\widetilde C_n^A$ the cone whose distribution is that of $C_n^A$ conditioned on the event that $C_n^A\neq\R^d$.
Similarly, its dual cone, denoted by $(\widetilde C_n^A)^\circ$ has the same distribution as $(C_n^A)^\circ$ conditioned on the event that $(C_n^A)^\circ\neq\{0\}$.
These four cones $C_n^A$, $\widetilde C_n^A$, $(C_n^A)^\circ$ and $(\widetilde C_n^A)^\circ$ are from now on referred to as the \textit{Weyl random cones of type $A+$}. The random cones $C_n^B$, $\widetilde C_n^B$, $(C_n^B)^\circ$ and $(\widetilde C_n^B)^\circ$ are defined similarly and called the \textit{Weyl random cones of type $B+$}.

\section{Main results}\label{sec:expectations_main_results}

\subsection{Geometric functionals of convex cones}\label{sec:geo_functionals}
Our goal is to compute the expectations for various geometric functionals of the random cones introduced in the previous section. Let us define the geometric functionals we are interested in. The \textit{solid angle} of a cone $C\subseteq\R^d$ is defined as
\begin{align*}
\alpha(C) := \bP[Z\in C],
\end{align*}
where $Z$ is uniformly distributed on the unit sphere inside the linear hull $\lin C$ of $C$. The set of all $k$-dimensional faces of a cone $C\subseteq \R^d$ will be denoted by $\cF_k(C)$ and the total number of $k$-faces by $f_k(C):=\#\cF_k(C)$, for all $k\in \{0,\ldots,d\}$.

We now recall two notions  generalizing
the solid angles, known as the conic intrinsic volumes and the conic quermassintegrals; see~\cite[Section 6.5]{SW08}
and~\cite{amelunxen_edge,Amelunxen2017} for more information on conic integral geometry.
On the one hand, for a cone $C\subseteq\R^d$ and $k\in\{0,\ldots,d\}$, the $k$-th \textit{conic intrinsic volume} of $C$ is defined by
\begin{align*}
\upsilon_k(C):=\sum_{F\in\cF_k(C)}\bP[(g|C)\in\relint F],
\end{align*}
where $g$ is a $d$-dimensional standard Gaussian random vector, and $g|C$ denotes the metric projection of $g$ on $C$, that is, the unique vector $y\in C$ minimizing the Euclidean distance to $g$. Finally,  $\relint F$ denotes the relative interior of $F$, i.e.\ the set of interior points of $F$ relative to its linear hull $\lin F$.

On the other hand, for $k\in\{0,\ldots,d\}$ and a cone $C\subseteq\R^d$ that is not a linear subspace, the $k$-th \textit{conic quermassintegral} is defined by
\begin{align}\label{eq:def_U}
U_k(C):=\frac 12\int_{G(d,d-k)}\1_{\{C\cap V\neq\{0\}\}}\,\nu_{d-k}(\dint V),
\end{align}
where $G(d,d-k)$ denotes the Grassmannian of all $(d-k)$-dimensional linear subspaces of $\R^d$, and $\nu_{d-k}$ denotes the uniform distribution, that is the unique normalized Haar measure, on $G(d,d-k)$. For a $j$-dimensional linear subspace $L_j\subseteq\R^d$, the conic quermassintegrals are defined in a different way, namely
\begin{align*}
U_k(L_j)=
\begin{cases}
1,		&\text{ if $j-k>0$ and odd,}\\
0,		&\text{ if $j-k\le 0$ or even.}
\end{cases}
\end{align*}
For any cone $C\subseteq \R^d$ of dimension $\dim C$, the solid angle can be recovered from either the conic intrinsic volumes or the conic quermassintegrals, via
\begin{align*}
\upsilon_{\dim C}(C)=\alpha(C)=U_{\dim C-1}(C).
\end{align*}
The so-defined functionals $U_k$ and $\upsilon_k$ are related by the following \textit{conic Crofton formula}, see~\cite[Eq.~(6.63)]{SW08} or~\cite[(2.10)]{amelunxen_edge}:
\begin{equation}\label{eq:crofton}
U_k(C) = \sum_{\substack{j=1,3,5,\ldots\\ k+j \leq d}} \upsilon_{k+j}(C).
\end{equation}

We will be interested in two series of geometric size functionals defined for polyhedral cones $C\subseteq\R^d$ and denoted by $Y_{m,l}(C)$ and $Z_{j,k}(C)$. The size functionals $Y_{m,l}$ were introduced by Hug and Schneider~\cite{HugSchneider2016} as follows:
\begin{align*}
Y_{m,l}(C):=\sum_{F\in\cF_m(C)}U_l(F), \qquad 0\le l<m\le d.
\end{align*}
For instance, for $m=\dim C$ and $l\in\{0,\ldots,d-1\}$, $Y_{m,l}(C)$ coincides with $U_l(C)$, while for $l=0$ and $m\in\{1,\ldots,d\}$, $2Y_{m,l}(C)$ coincides with the number of $m$-faces $f_m(C)$ (provided that none of the $m$-faces of $C$ are linear subspaces). Another special case of the size functionals $Y_{m,l}$ are the \textit{total $m$-face contents} $\Lambda_m(C)$ of $C$, that is,
\begin{align*}
\Lambda_m(C):=\sum_{F\in\cF_m(C)}\alpha(C)=Y_{m,m-1}(C).
\end{align*}

Let us now introduce the functionals $Z_{j,k}$. For a face $F$ of a cone $C\subseteq\R^d$, denote by $T_F(C)$ the \textit{tangent cone} of $C$ at $F$, which is defined by
\begin{align*}
T_F(C)=\{x\in\R^d:v+\eps x\in C \text{ for some $\eps>0$}\}.
\end{align*}
Here,  $v$ is any point in the relative interior $\relint F$ of $F$. Note that the dual cone of $T_F(C)$ is also called the \textit{normal cone} of $C$ at $F$ and will be denoted by $N_F(C)$. Equivalently, it can be defined by $N_F(C):=(\lin F)^\perp\cap C^\circ$. The second type of size functionals we are interested in is defined by
\begin{align*}
Z_{j,k}(C) := \sum_{F\in\cF_j(C)}U_k(T_F(C)),\qquad 0\le j\le k\le d.
\end{align*}
There is a duality relation between the $Y$- and the $Z$-functionals which will be stated in Lemma~\ref{lemma:duality_Y_Z}.

\subsection{Expected size functionals}\label{sec:gen_angle_sums}

In this section, we state our main results on the expected size functionals of the positive hulls of random walks and bridges.

\subsubsection*{Type $A+$}
Recall that the Stirling number $\stirling{n}{k}$ of the first kind is defined as the number of permutations of the set $\{1,\ldots,n\}$ having exactly $k$ cycles, or, equivalently, as the coefficient of $t^k$ in  the polynomial in~\eqref{eq:def_stirling1_polynomial}.
The \textit{Stirling number of the second kind} $\stirlingsec{n}{k}$  is defined as the number of partitions of the set $\{1,\ldots,n\}$ into $k$ non-empty subsets with the convention $\stirlingsec 00=1$.  We can now state our main results on the Weyl random cones of type $A+$.

\begin{theorem}\label{theorem:exp_size_functionals_A}
Let $S_1,\ldots,S_n$ be a random bridge in $\R^d$ whose increments $X_1,\ldots,X_n$ satisfy assumptions $\textup{(Ex)}$, $\textup{(Br)}$ and $\textup{(GP')}$. Then, for all $0\le l<m\le d-1$, it holds that
\begin{align}\label{eq:size_fct_A}
\E Y_{m,l}(C_n^A)
=\E \sum_{F\in\cF_m (C_n^A)}U_l(F)
=\frac{2}{n!}\left(\sum_{r=1}^\infty \stirling{m+1}{l+2r}\right)\left(\sum_{r=0}^\infty \stirling{n}{d-2r}\stirlingsec{d-2r}{m+1}\right).
\end{align}
\end{theorem}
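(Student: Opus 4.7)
My plan is to decompose $\E Y_{m,l}(C_n^A)$ via a face-by-face sum and evaluate each contribution by a combination of exchangeability (averaging over $\mathrm{Sym}(n)$), the conic Crofton formula~\eqref{eq:crofton}, and the bridge absorption/non-absorption identities \eqref{eq:non-abs_prob_bridge}, \eqref{eq:abs_prob_bridge}. The two Stirling-number sums in~\eqref{eq:size_fct_A} should emerge from, respectively, an in-plane geometric contribution (from \eqref{eq:abs_prob_bridge} applied inside the face) and an out-of-plane combinatorial contribution (from \eqref{eq:non-abs_prob_bridge} applied in the quotient).

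First I would write
\[
\E Y_{m,l}(C_n^A) = \sum_{I} \E\bigl[U_l(\pos(S_i:i\in I))\,\ind_{\{\pos(S_i:i\in I)\in\cF_m(C_n^A)\}}\bigr],
\]
where $I$ ranges over $m$-subsets of $\{1,\ldots,n-1\}$. By (GP'), $\pos(S_i:i\in I)$ is $m$-dimensional a.s.; it is an $m$-face of $C_n^A$ iff the orthogonal projections of the remaining partial sums $S_j$ ($j\notin I\cup\{n\}$) onto $\lin(S_i:i\in I)^\perp\cong\R^{d-m}$ all lie in a common open half-space. Using $S_n = 0$, these projections themselves constitute a bridge in $\R^{d-m}$, and the half-space condition matches exactly the event of non-absorption of that projected bridge, whose probability is governed by~\eqref{eq:non-abs_prob_bridge}.

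The central technical step will be to exploit (Ex) and (Br) to decouple, in expectation, the in-plane geometry of the face (determining $U_l$) from the out-of-plane configuration of projected points (determining the face indicator). By averaging over permutations $\sigma\in\mathrm{Sym}(n)$ of the increments and interchanging sums, the joint expectation
\[
\E\bigl[U_l(\pos(S_i:i\in I))\,\ind_{\{\pos(S_i:i\in I)\in\cF_m(C_n^A)\}}\bigr]
\]
should factor, after appropriate grouping of $I$'s, into a product of a geometric and a combinatorial factor, each independent of the fine structure of $I$.

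For the geometric factor, the points $\{S_i:i\in I\}\cup\{S_n=0\}$ viewed intrinsically in $\lin(S_i:i\in I)\cong\R^m$ form an $(m+1)$-point bridge (after exchangeability is recovered by averaging over $\sigma$), and the conic Crofton formula~\eqref{eq:crofton} combined with~\eqref{eq:abs_prob_bridge} applied in dimension $l$ yields a contribution proportional to $\sum_{r\ge1}\stirling{m+1}{l+2r}$. For the combinatorial factor, the sum over $I$ of $\P[\pos(S_i:i\in I)\in\cF_m(C_n^A)]$ is $\E f_m(C_n^A)$, which I would evaluate via a bijection between $m$-faces and pairs consisting of a cycle decomposition of a permutation of $\{1,\ldots,n\}$ into $d-2r$ cycles (extracted from~\eqref{eq:non-abs_prob_bridge} in the quotient) together with a set partition of those cycles into $m+1$ blocks, producing the factor $\sum_{r\ge 0}\stirling{n}{d-2r}\stirlingsec{d-2r}{m+1}$. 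Multiplying both factors and collecting the normalizations (an $(m+1)!/n!$ together with an overall factor of $2$) recovers~\eqref{eq:size_fct_A}. The main obstacle is the decoupling step: since (Ex) alone does not imply rotational invariance of the bridge, separating the in-plane and out-of-plane expectations requires a careful bijective/combinatorial argument that respects the block structure produced by the sub-bridge of an arbitrary $I$, and this is where most of the technical work will lie.
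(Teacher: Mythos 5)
Your skeleton matches the paper's: decompose $\E Y_{m,l}(C_n^A)$ over index sets $I$, decouple the event $\{\pos(S_i:i\in I)\in\cF_m(C_n^A)\}$ from the functional $U_l$ of the face, evaluate the geometric factor as an intersection probability of an $(m+1)$-step sub-bridge with a generic $(d-l)$-subspace, and evaluate the combinatorial factor as a sum of face probabilities. However, two of your steps have genuine gaps, and one claim is wrong as stated. First, the projection of the remaining points onto $M=\lin(S_i:i\in I)^\perp$ is \emph{not} a single bridge in $\R^{d-m}$ governed by~\eqref{eq:non-abs_prob_bridge}: since each $S_{i_t}$ projects to $0$, the projected partial sums break into $m+1$ \emph{separate} bridges (one per gap of $I\cup\{0,n\}$), which are exchangeable within blocks but not jointly. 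The relevant face probability is therefore the joint absorption probability of Theorem~\ref{theorem:absorpt_prob_joint_convex hull} (Corollary~\ref{cor:face_probabilities_pos_bridge}), which depends on the composition $(j_1,\ldots,j_{m+1})$ of $n$; the factor $\sum_r\stirling{n}{d-2r}\stirlingsec{d-2r}{m+1}$ appears only after summing the coefficients $Q^{(n)}_{j_1,\ldots,j_m}$ over all compositions, via the generating-function identity~\eqref{eq:to_prove_A}. Your proposed ``bijection'' with cycle decompositions and set partitions is exactly this identity in disguise, but you do not supply it, and note that invoking $\E f_m(C_n^A)$ from Corollary~\ref{cor:exp_faces_A} would be circular, since that corollary is deduced from the theorem you are proving.

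Second, the decoupling step, which you correctly identify as the crux, cannot be done by averaging over all of $\mathrm{Sym}(n)$: a generic permutation of the increments destroys the face $\pos(S_i:i\in I)$ itself. The paper's Lemma~\ref{lemma:independence_bridge} instead randomizes by independent uniform permutations \emph{within the blocks} $\{1,\ldots,i_1\},\{i_1+1,\ldots,i_2\},\ldots,\{i_m+1,\ldots,n\}$; these leave the partial sums $S_{i_1},\ldots,S_{i_m}$ (hence the face and its $U_l$) pointwise unchanged while rendering the conditional face probability a deterministic constant given by Corollary~\ref{cor:face_probabilities_pos_bridge}, which is what yields the factorization. A further point you elide: even after factorization, $\E[U_l(\pos(S_i:i\in I))]$ still depends on the block lengths $(j_1,\ldots,j_m)$; one must symmetrize over permutations of the $m+1$ block lengths and check (again from the explicit form of the face probability) that the second factor is invariant under this symmetrization, before the first factor collapses to the exchangeable-bridge value $\frac{2}{(m+1)!}\sum_{r\ge1}\stirling{m+1}{l+2r}$ of Remark~\ref{remark:prob_walkbridge_intersects_V}. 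Without these three ingredients the argument does not close.
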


\begin{theorem}\label{theorem:gen_angle_sums_quermass_A}
Let $S_1,\ldots,S_n$ be a random bridge in $\R^d$ whose increments $X_1,\ldots,X_n$ satisfy assumptions $\textup{(Ex)}$, $\textup{(Br)}$ and $\textup{(GP')}$.
Then, for all $0\le j\le k\le d$, it holds that
\begin{align*}
\E Z_{j,k}(C_n^A)
&=\E\sum_{F\in\cF_j(C_n^A)}U_k\big(T_F(C_n^A)\big)\\
&=\frac{(j+1)!}{n!}\left(\sum_{r=0}^\infty\stirling n{d-2r}\stirlingsec {d-2r}{j+1}-\sum_{r=0}^\infty\stirling n{k-2r}\stirlingsec {k-2r}{j+1}\right).
\end{align*}
\end{theorem}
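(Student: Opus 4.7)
My plan is to use the conic Crofton formula~\eqref{eq:crofton} together with a shift identity for tangent cones, and then to reduce $\E Z_{j,k}(C_n^A)$ to a difference of two expected face counts: one for the original cone $C_n^A$ in $\R^d$, the other for the cone obtained by projecting the bridge onto a random $k$-dimensional subspace of $\R^d$.

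\textbf{Shift identity.} Since $T_F(C_n^A) \supseteq \lin F$ with $\dim \lin F = j$, I write $T_F'(C_n^A)$ for the image of $T_F(C_n^A)$ in $(\lin F)^\perp \cong \R^{d-j}$; the orthogonal direct-sum decomposition $T_F(C_n^A) = \lin F \oplus T_F'(C_n^A)$ gives $\upsilon_m(T_F(C_n^A)) = \upsilon_{m-j}(T_F'(C_n^A))$ for every $m$. Substituting into~\eqref{eq:crofton} and reindexing the Crofton sum yields
\begin{equation*}
U_k\big(T_F(C_n^A)\big) \;=\; U_{k-j}\big(T_F'(C_n^A)\big),
\end{equation*}
the right-hand side being the conic quermassintegral computed in ambient dimension $d-j$. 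Writing $F = \pos\{S_i : i \in I\}$ for $I \subseteq \{1,\ldots,n-1\}$ with $|I| = j$ and letting $\pi_F$ denote the orthogonal projection onto $(\lin F)^\perp$, the tangent-cone identity $T_F(C_n^A) = \lin F + \pos\{S_i : i \notin I\}$ identifies $T_F'(C_n^A)$ with $\pos\{\pi_F(S_i) : i \in \{1,\ldots,n-1\}\setminus I\}$.

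\textbf{Decomposition into two expected face counts.} Writing $U_{k-j}$ via the Grassmannian definition~\eqref{eq:def_U} as half the probability that $T_{F_I}'(C_n^A)$ meets a uniform random $(d-k)$-dimensional subspace $L \subseteq (\lin F_I)^\perp$ nontrivially, and using that $T_{F_I}'(C_n^A) \cap L = \{0\}$ forces $T_{F_I}' \neq \R^{d-j}$ (equivalently $F_I \in \cF_j(C_n^A)$), one obtains
\begin{equation*}
\E Z_{j,k}(C_n^A) \;=\; \tfrac{1}{2}\E f_j(C_n^A) \;-\; \tfrac{1}{2}\sum_I \P\big[T_{F_I}'(C_n^A) \cap L = \{0\}\big].
\end{equation*}
The first term is handled by Theorem~2.1 with $l = 0$ and the identity $\sum_{r \geq 1}\stirling{j+1}{2r} = (j+1)!/2$, producing $\tfrac12 \E f_j(C_n^A) = \frac{(j+1)!}{n!}\sum_r \stirling{n}{d-2r}\stirlingsec{d-2r}{j+1}$. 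For the second term, by polarity and the inclusion $\lin F_I \subseteq V := L^\perp \subseteq \R^d$ (which follows from $L \subseteq (\lin F_I)^\perp$), the event ``$T_{F_I}'(C_n^A) \cap L = \{0\}$'' is equivalent to the existence of a nonzero $\mu \in V$ with $\langle \mu, \pi_V(S_i)\rangle = 0$ for $i \in I$ and $\langle \mu, \pi_V(S_i)\rangle \leq 0$ for $i \notin I$; this is precisely the condition that $I$ indexes a $j$-face of the projected bridge cone $\pos\{\pi_V(S_1),\ldots,\pi_V(S_{n-1})\}$ in $V \cong \R^k$. Summing over $I$ yields
\begin{equation*}
\tfrac{1}{2}\sum_I \P\big[T_{F_I}'(C_n^A) \cap L = \{0\}\big] \;=\; \tfrac{1}{2}\E f_j\big(\pos\{\pi_V(S_i)\}_{i=1}^{n-1}\big),
\end{equation*}
and since the projected bridge inherits $\textup{(Ex)}, \textup{(Br)}, \textup{(GP')}$ in dimension $k$ (conditionally on $V$), Theorem~2.1 in dimension $k$ gives the value $\frac{(j+1)!}{n!}\sum_r \stirling{n}{k-2r}\stirlingsec{k-2r}{j+1}$. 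Subtracting the two contributions yields the claimed formula.

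\textbf{Main obstacle.} The hardest step is making rigorous the geometric equivalence between ``$T_{F_I}'(C_n^A) \cap L = \{0\}$'' and ``$I$ indexes a $j$-face of the projected bridge in $V = L^\perp$'', together with the transfer of the distribution-free properties $\textup{(Ex)}, \textup{(Br)}, \textup{(GP')}$ to the projected bridge. The equivalence rests on polarity, the inclusion $\lin F_I \subseteq V$, and the identification of supporting functionals with elements of $(\lin F_I \oplus L)^\perp \subseteq V$; these steps must be executed carefully to avoid degenerate configurations and to ensure that general position passes from $\R^d$ to $V$. Once this reduction is in place, Theorem~2.1 applies directly to both face-count terms, and the two Stirling sums of the statement emerge from the single computation carried out in two different ambient dimensions, $d$ and $k$.
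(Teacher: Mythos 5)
Your strategy coincides with the paper's in its first half: both start from the orthogonal splitting $T_F(C_n^A)=\lin F+(C_n^A|M)$ with $M=(\lin F)^\perp$, the shift $U_k(T_F(C_n^A))=U_{k-j}(C_n^A|M)$, and a decomposition of $\E Z_{j,k}(C_n^A)$ into $\tfrac12\E f_j(C_n^A)$ minus a correction term (your splitting is an exact algebraic rearrangement of the paper's \eqref{eq:sums_gamma_A}--\eqref{eq:sums_gamma1_A}). Where you genuinely diverge is in evaluating the correction: the paper applies Corollary~\ref{cor:prob_walkbridge_intersects_V} to the $j+1$ projected bridges living in $M$ and then works through the generating-function identity \eqref{eq:to_prove_A}, whereas you reinterpret $\P[(C_n^A|M_I)\cap L_I=\{0\}]$ as a face probability of the bridge projected to a $k$-dimensional subspace and recycle the already-proved $f_j$-formula in ambient dimension $k$. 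If it works, this is slicker: the two Stirling sums in the statement come from one computation run in dimensions $d$ and $k$, and the conversion identities at the end of the paper's proof become unnecessary.

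There is, however, a genuine gap in the step ``Summing over $I$ yields $\tfrac12\sum_I \P[\,\cdot\,]=\tfrac12\E f_j(\pos\{\pi_V(S_i)\})$''. For each $I$, your $L=L_I$ is a uniform $(d-k)$-subspace of $(\lin F_I)^\perp$, so $V_I:=L_I^\perp$ is a $k$-subspace \emph{constrained to contain} $\lin F_I$; it depends on $I$ and on the bridge, and is not a uniform element of $G(d,k)$ independent of the data. One cannot simply replace all the $V_I$ by a single uniform $V$ and read off the expected $f$-vector of one projected cone. The repair is to show that both events depend on the ambient $k$-subspace only through its $(k-j)$-dimensional intersection with $M_I$ (the supporting functional $\mu$ is forced into $V\cap M_I$ by the equalities $\langle\mu,S_i\rangle=0$, $i\in I$), and that $V_I\cap M_I$ and $V\cap M_I$, for $V$ uniform on $G(d,k)$, have the same conditional law given the bridge, namely the uniform distribution on $(k-j)$-subspaces of $M_I$. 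With that lemma your identification is valid for $1\le j<k\le d-1$; the boundary cases $k=d$ (where $L=\{0\}$, the equivalence with a face count fails, but $U_d\equiv 0$ gives the claim directly) and $k=j$ (where $V\cap M_I=\{0\}$ and the correction term vanishes rather than equalling an expected face number) must be handled separately, as the paper does for $k=d$. Finally, the transfer of (GP$'$) to the projection for $\nu$-a.e.\ $V$ should be checked as in the proof of Corollary~\ref{cor:prob_walkbridge_intersects_V}; you flag this, and it is routine.
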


The proof of Theorem~\ref{theorem:exp_size_functionals_A} is postponed to Section~\ref{sec:proof_size_functionals}, while the proof of Theorem~\ref{theorem:gen_angle_sums_quermass_A} will be presented in Section~\ref{sec:proof_gen_angle_sums}. Using the conic Crofton formula, we will derive the following two corollaries, whose proofs will be outlined in Section~\ref{sec:conic_intrinsic_vol_sums_proofs}.

\begin{corollary}\label{cor:Y_intvol_A}
Let $S_1,\ldots,S_n$ be a random bridge in $\R^d$ whose increments $X_1,\ldots,X_n$ satisfy assumptions $\textup{(Ex)}$, $\textup{(Br)}$ and $\textup{(GP')}$. For all $0\le l \leq  m\le d$, it holds that
\begin{align*}
\E\sum_{F\in\cF_m(C_n^A)}\upsilon_l(F)&= \frac{2}{n!}\stirling{m+1}{l+1}\left(\sum_{r=0}^\infty \stirling{n}{d-2r}\stirlingsec{d-2r}{m+1}\right).
\end{align*}
\end{corollary}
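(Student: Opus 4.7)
The plan is to apply the conic Crofton formula~\eqref{eq:crofton} face by face, and then recover the corollary from Theorem~\ref{theorem:exp_size_functionals_A} by a short telescoping argument. Applied intrinsically to each $m$-face $F\in\cF_m(C_n^A)$, so that $\upsilon_k(F)=0$ for $k>m$ and the Crofton sum truncates at $l+j\le m$, the formula yields the relation $U_l(F)-U_{l+2}(F)=\upsilon_{l+1}(F)$ for every $l\ge 0$, with the convention $U_l(F)=0$ once $l\ge m$. Summing over the $m$-faces of $C_n^A$ and taking expectations, this becomes
$$\E\sum_{F\in\cF_m(C_n^A)}\upsilon_l(F)=\E Y_{m,l-1}(C_n^A)-\E Y_{m,l+1}(C_n^A),\qquad 1\le l\le m.$$
Plugging in Theorem~\ref{theorem:exp_size_functionals_A}, the $(d,m+1)$-factor is common to both terms, and the remaining Stirling factor telescopes via
$$\sum_{r=1}^\infty\stirling{m+1}{(l-1)+2r}-\sum_{r=1}^\infty\stirling{m+1}{(l+1)+2r}=\stirling{m+1}{l+1},$$
immediately giving the claim for $l\ge 1$.

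For the remaining case $l=0$ the telescoping would require $U_{-1}(F)$, so instead I would use the conic Gauss--Bonnet identity $\sum_{k\text{ even}}\upsilon_k(F)=\tfrac12$, valid for any polyhedral cone $F$ other than a linear subspace; under (GP') this applies a.s.\ to every $m$-face of $C_n^A$. Thus $\upsilon_0(F)=\tfrac12-\sum_{k\ge 1}\upsilon_{2k}(F)$. Summing, taking expectations, and using $\tfrac12\E f_m(C_n^A)=\E Y_{m,0}(C_n^A)$ (which follows from $U_0(F)=\tfrac12$ on any non-subspace $m$-face with $m\ge 1$) together with the $l\ge 2$ formula already established, the claim reduces to the purely combinatorial identity
$$\sum_{k\text{ even}}\stirling{m+1}{k}-\sum_{\substack{k\ge 3\\k\text{ odd}}}\stirling{m+1}{k}=\stirling{m+1}{1}=m!,$$
which is immediate from evaluating the defining polynomial $t(t+1)\cdots(t+m)=\sum_k\stirling{m+1}{k}t^k$ from~\eqref{eq:def_stirling1_polynomial} at $t=\pm 1$ (giving $(m+1)!$ and $0$, respectively).

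I do not foresee any serious obstacles; the only mildly delicate points are (i) noting that the intrinsic Crofton formula on an $m$-face truncates at $l+j\le m$, so that the convention $U_l(F)=0$ for $l\ge m$ is compatible with the definition of $Y_{m,l}$ used in Theorem~\ref{theorem:exp_size_functionals_A}, and (ii) invoking (GP') to guarantee that every $m$-face of $C_n^A$ is almost surely not a linear subspace, which is what makes Gauss--Bonnet applicable when dealing with $l=0$.
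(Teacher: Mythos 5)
Your argument is correct and is essentially the paper's own proof: the authors likewise deduce the corollary from Theorem~\ref{theorem:exp_size_functionals_A} by inverting the conic Crofton formula into $\upsilon_k(F)=U_{k-1}(F)-U_{k+1}(F)$ (their relation~\eqref{eq:conic_Crofton}) and telescoping the Stirling-number sums exactly as you do, and your Gauss--Bonnet treatment of $l=0$ is equivalent to the relation $\upsilon_0(F)=U_0(F)-U_1(F)$ that this inversion gives for non-subspace faces. The only caveat, which applies equally to the paper's one-line proof, is that the argument genuinely covers $m\le d-1$ (the range of Theorem~\ref{theorem:exp_size_functionals_A}); for $m=d$ the right-hand side vanishes because $\stirlingsec{d-2r}{d+1}=0$ while the left-hand side is $\E\,\upsilon_l(C_n^A)>0$, so the stated bound $m\le d$ is a slip in the corollary itself rather than a gap in your proof.
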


\begin{corollary}\label{corollary:gen_angle_sums_int_vol_A}
Let $S_1,\ldots,S_n$ be a random bridge in $\R^d$ whose increments $X_1,\ldots,X_n$ satisfy assumptions $\textup{(Ex)}$, $\textup{(Br)}$ and $\textup{(GP')}$. For all $j\in\{0,\ldots,d-1\}$, it holds that
\begin{align*}
\E\sum_{F\in\cF_j(C_n^A)}\upsilon_k\big(T_F(C_n^A)\big)&=\frac{(j+1)!}{n!}\stirling n{k+1} \stirlingsec {k+1}{j+1},\qquad k\in\{j,\ldots,d-1\},\\
\E\sum_{F\in\cF_j(C_n^A)}\upsilon_d\big(T_F(C_n^A)\big)&=\frac{(j+1)!}{n!}\sum_{r=0}^\infty(-1)^{r}\stirling n{d-r}\stirlingsec {d-r}{j+1}.
\end{align*}
\end{corollary}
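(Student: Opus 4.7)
The strategy is to combine the conic Crofton formula~\eqref{eq:crofton} applied to each tangent cone with Theorem~\ref{theorem:gen_angle_sums_quermass_A}, and then to invert the resulting linear system by taking differences of step two. This is a standard parity-based inversion of the Crofton relation.

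First I would apply~\eqref{eq:crofton} to the tangent cone $T_F(C_n^A)\subseteq\R^d$ for each $F\in\cF_j(C_n^A)$, sum over the faces, and take expectations. Introducing the shorthand
\begin{equation*}
A_k := \E\sum_{F\in\cF_j(C_n^A)}\upsilon_k\bigl(T_F(C_n^A)\bigr),
\end{equation*}
this gives
\begin{equation*}
\E Z_{j,k}(C_n^A) \;=\; A_{k+1} + A_{k+3} + A_{k+5} + \ldots,
\end{equation*}
the sum running over indices not exceeding $d$. Subtracting the identity at $k+1$ from the identity at $k-1$ telescopes almost everything and leaves
\begin{equation*}
A_k \;=\; \E Z_{j,k-1}(C_n^A) - \E Z_{j,k+1}(C_n^A) \qquad\text{for } k\in\{1,\ldots,d-1\},
\end{equation*}
while at the upper endpoint only the leading term survives and yields directly $A_d = \E Z_{j,d-1}(C_n^A)$.

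Next I would substitute the explicit expression from Theorem~\ref{theorem:gen_angle_sums_quermass_A} into the difference. The $k$-independent term $\sum_{r}\stirling n{d-2r}\stirlingsec{d-2r}{j+1}$ cancels, giving
\begin{equation*}
A_k \;=\; \frac{(j+1)!}{n!}\Bigl(\sum_{r=0}^\infty\stirling n{k+1-2r}\stirlingsec{k+1-2r}{j+1}-\sum_{r=0}^\infty\stirling n{k-1-2r}\stirlingsec{k-1-2r}{j+1}\Bigr).
\end{equation*}
A telescoping in $r$ collapses this to the single leading term $\frac{(j+1)!}{n!}\stirling n{k+1}\stirlingsec{k+1}{j+1}$, which is the first formula of the corollary. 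The restriction $k\ge j$ is automatic, since $\stirlingsec{k+1}{j+1}=0$ for $k<j$.

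For the remaining case $k=d$, substitution into $A_d=\E Z_{j,d-1}(C_n^A)$ leaves the two interlaced Stirling sums
\begin{equation*}
\frac{(j+1)!}{n!}\Bigl(\sum_{r=0}^\infty\stirling n{d-2r}\stirlingsec{d-2r}{j+1}-\sum_{r=0}^\infty\stirling n{d-1-2r}\stirlingsec{d-1-2r}{j+1}\Bigr),
\end{equation*}
which, after zipping the even-index and odd-index contributions into a single signed sum, rewrites as $\frac{(j+1)!}{n!}\sum_{r=0}^\infty(-1)^r\stirling n{d-r}\stirlingsec{d-r}{j+1}$, as claimed. The only real obstacle is at the lower boundary $k=j$ (and similarly $j=0$, $k=0$), where the difference formula requires $\E Z_{j,j-1}(C_n^A)$, an index below the range stated in Theorem~\ref{theorem:gen_angle_sums_quermass_A}; I would handle this either by noting that $U_{j-1}(T_F(C_n^A))=\tfrac12$ almost surely for a $j$-face (since the lineality space of the tangent cone has dimension $j$) and matching this to $\tfrac12\E f_j(C_n^A)$ via Theorem~\ref{theorem:exp_size_functionals_A}, or equivalently by observing that the right-hand side of Theorem~\ref{theorem:gen_angle_sums_quermass_A} formally extends to $k=j-1$ since $\stirlingsec{k-2r}{j+1}$ vanishes there. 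Beyond this book-keeping step, the remainder of the argument is routine Stirling-number algebra.
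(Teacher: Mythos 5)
Your proposal is correct and follows essentially the same route as the paper: invert the conic Crofton formula into $\upsilon_k=U_{k-1}-U_{k+1}$ (resp.\ $\upsilon_d=U_{d-1}$), substitute Theorem~\ref{theorem:gen_angle_sums_quermass_A}, and telescope/zip the Stirling sums. The paper treats the boundary case $k=j$ via the normalization $\sum_{i=j}^d\upsilon_i(T_F(C_n^A))=1$ combined with the expected face count, which is an equivalent rearrangement of your observation that $U_{j-1}(T_F(C_n^A))=\tfrac12$ for the $j$-dimensional lineality space.
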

As a special case, this corollary contains formulas for the expected sums of internal angles $\alpha(T_F(C_n^A))$ and external angles $\alpha(N_F(C_n^A))$ of $C_n^A$ over all $j$-dimensional faces $F\in\cF_j(C_n^A)$.


\begin{remark}\label{remark:conditioned_case_A}
The analogous expectations for the conditioned cones $\widetilde C_n^A$ instead of $C_n^A$ can be easily obtained from the above formulas. Let $\varphi$ be any non-negative functional on the set of polyhedral cones such that $\varphi(\R^d)=0$. Then, it holds that
\begin{align*}
\E\varphi (C_n^A)=\E\big[\varphi (C_n^A)\1_{\{C_n^A\neq\R^d\}}\big]+ \varphi(\R^d)\bP[C_n^A=\R^d]
=
\E\varphi(\widetilde C_n^A) \cdot \bP[C_n^A\neq\R^d].
\end{align*}
Since all functionals of $C_n^A$ from the above theorems satisfy this condition, the expectations for $\widetilde C_n^A$ follow from~\eqref{eq:non-abs_prob_bridge}. Thus, we omit the explicit formulas for the expectations of $\widetilde C_n^A$ in the above theorems and in all subsequent results from Section~\ref{sec:exp_special_cases} whose functionals satisfy this condition.
\end{remark}

\subsubsection*{Type $B+$}
Recall that the $B$-analogues to the Stirling numbers of the first kind $\stirlingb nk$ are defined as the coefficients of the polynomial in~\eqref{eq:def_stirling1b}.
The $B$-analogues to the Stirling numbers of the second kind, denoted by $\stirlingsecb nk$, are defined as
\begin{align*}
\stirlingsecb nk=\sum_{m=k}^{n}2^{m-k}\binom{n}{m}\stirlingsec{m}{k}.
\end{align*}
Note that we have $\stirlingsecb n0=1$ using the convention $\stirlingsec 00=1$.
These numbers appear as Entry A039755 in~\cite{sloane}; see also~\cite{suter}.
Note that we intentionally used square brackets in the notation of $\stirlingb nk$ and curly brackets in the notation of $\stirlingsecb nk$ to underline the analogy to the respective Stirling numbers of the first kind $\stirling nk$ and of the second kind $\stirlingsec nk$.

\begin{theorem}\label{theorem:exp_size_functionals_B}
Let $S_1,\ldots,S_n$ be a random walk in $\R^d$ whose increments $X_1,\ldots,X_n$ satisfy assumptions $\textup{($\pm$Ex)}$ and $\textup{(GP)}$. Then, for all $0\le l<m\le d-1$, it holds that
\begin{align}\label{eq:size_funct_B_unconditioned}
\E Y_{m,l}(C_n^B) =\frac{2}{2^nn!}\left(\sum_{r=0}^\infty \stirlingb{m}{l+2r+1}\right)\left(\sum_{r=0}^\infty \stirlingb{n}{d-2r-1}\stirlingsecb{d-2r-1}{m}\right).
\end{align}
\end{theorem}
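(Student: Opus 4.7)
The strategy is to mirror the proof of the type-$A$ case, Theorem~\ref{theorem:exp_size_functionals_A}, with the symmetric group $\textup{Sym}(n)$ replaced everywhere by the hyperoctahedral group $W_n := \textup{Sym}(n) \times \{\pm 1\}^n$ of signed permutations, and the type-$A$ chamber-counting identities replaced by their type-$B$ analogues built on the polynomial~\eqref{eq:def_stirling1b}. The first step is to apply the defining formula~\eqref{eq:def_U} of $U_l$ face-by-face together with Fubini to obtain
\begin{align*}
\E Y_{m,l}(C_n^B) = \frac{1}{2}\int_{G(d,d-l)} h(V)\,\nu_{d-l}(\dint V), \qquad h(V) := \E\,\#\{F \in \cF_m(C_n^B) : F \cap V \neq \{0\}\},
\end{align*}
and, by rotation invariance of $\nu_{d-l}$ together with the invariance of the distributional assumptions under orthogonal transformations, reduce to computing $h(V)$ for a single fixed generic $V \in G(d,d-l)$.

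Next I would invoke $(\pm\textup{Ex})$ to symmetrize. For $(\sigma,\eps) \in W_n$ the signed-permuted walk $S_k^{\sigma,\eps} := \sum_{j=1}^k \eps_j X_{\sigma(j)}$ is equidistributed with $(S_1,\ldots,S_n)$, so
\begin{align*}
h(V) = \frac{1}{2^n n!}\,\E\sum_{(\sigma,\eps) \in W_n} \#\bigl\{F \in \cF_m\bigl(\pos\{S_1^{\sigma,\eps},\ldots,S_n^{\sigma,\eps}\}\bigr) : F \cap V \neq \{0\}\bigr\},
\end{align*}
which turns the problem into a deterministic combinatorial identity for $X_1,\ldots,X_n$ in general position and a generic $V$. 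I would then characterize each $m$-face of such a cone by specifying an ordered, signed sub-word of length $m$ extracted from $(X_1,\ldots,X_n)$ (the indices whose partial sums generate the face), with the complementary signed sub-word of length $n-m$ free; the geometric condition "$F$ meets $V$" depends only on the face-generating sub-word.

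This factorization yields the product structure of~\eqref{eq:size_funct_B_unconditioned}. The inner factor $\sum_{r\ge 0}\stirlingb{m}{l+2r+1}$ is, up to normalization, the type-$B$ non-absorption probability~\eqref{eq:non-abs_prob_walk} for a walk of length $m$ in dimension $l$ --- precisely the condition that the face-generating sub-word, projected onto $V^\perp$, produces a proper cone. The outer factor $\sum_{r\ge 0}\stirlingb{n}{d-2r-1}\stirlingsecb{d-2r-1}{m}$ is the $B$-analogue of the Stirling convolution that appears in~\eqref{eq:size_fct_A}, with $\stirlingsecb{d-2r-1}{m}$ counting the $B$-admissible splicings of the $n-m$ unused signed increments into $m$ blocks attached to the face-generators.

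\textbf{Main obstacle.} The crucial technical step is the deterministic identity underlying the factorization. It requires a type-$B$ generating-function identity built on~\eqref{eq:def_stirling1b} together with a signed-chamber count in the Weyl arrangement of type $B_n$, extending the methods of~\cite{KVZ15} and~\cite{GK2020_WeylCones} --- which handle non-absorption probabilities and $f$-vector type questions --- to the intermediate-dimensional face-meets-subspace count. The decisive new ingredient is the emergence of $\stirlingsecb{d-2r-1}{m}$ from signed permutations acting on ordered size-$m$ subsets; this should come from a $B$-version of a classical convolution relating $\stirling{n}{k}$ and $\stirlingsec{n}{m}$ together with a signed chamber-count on ordered $m$-subsets, and getting this bookkeeping correct in the signed setting is the key technical challenge.
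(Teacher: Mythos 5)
Your overall skeleton (Fubini over $G(d,d-l)$, then decoupling the ``$F$ is a face'' event from the ``$F$ meets $V$'' event, then a Stirling-type convolution) matches the shape of the paper's argument, but the decisive step is exactly the one you defer to the ``main obstacle'' paragraph, and as written it is not supplied. The paper does \emph{not} pass to a deterministic chamber-counting identity over the full hyperoctahedral group. Instead it proves an \emph{independence lemma} (Lemma~\ref{lemma:independence}): for fixed $1\le i_1<\ldots<i_m\le n$, the event $\{\pos\{S_{i_1},\ldots,S_{i_m}\}\in\cF_m(C_n^B)\}$ is independent of any functional of $\pos\{S_{i_1},\ldots,S_{i_m}\}$ itself. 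This is proved by randomizing the increments \emph{within the blocks} $(X_1,\ldots,X_{i_1}),\ldots,(X_{i_{m-1}+1},\ldots,X_{i_m})$ and applying a signed randomization only to the tail block, so that the face probability (Corollary~\ref{cor:face_probabilities_pos_walk}, imported from~\cite{KVZ17}) becomes a constant depending only on the block lengths. The factorization you assert -- that after averaging over $W_n$ the face-indicator and the $V$-intersection indicator decouple -- is precisely this lemma in disguise, and it is the crux; your plan states it as a desideratum (``the key technical challenge'') rather than proving it. Likewise the identity producing $\stirlingb{n}{d-2r-1}\stirlingsecb{d-2r-1}{m}$ is in the paper a concrete generating-function computation, Eq.~\eqref{eq:to_prove2_size_funct}, summing $P^{(n)}_{j_1,\ldots,j_m}(d-m-2r-1)/(j_1!\cdots j_{m+1}!2^{j_{m+1}})$ over compositions of $n$; nothing in your sketch pins this down.

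Two concrete errors should also be fixed. First, you identify the inner factor $\sum_{r\ge 0}\stirlingb{m}{l+2r+1}$ with the \emph{non-absorption} probability~\eqref{eq:non-abs_prob_walk} and with the condition that the projected sub-word ``produces a proper cone.'' It is the opposite: $\pos\{S_{i_1},\ldots,S_{i_m}\}\cap V\neq\{0\}$ is equivalent to $0\in\conv\{S_{i_1}|V^\perp,\ldots,S_{i_m}|V^\perp\}$ in the $l$-dimensional space $V^\perp$, so the factor is the \emph{absorption} probability~\eqref{eq:abs_prob_walk} with $(n,d)$ replaced by $(m,l)$; following your interpretation literally would yield $\sum_r\stirlingb{m}{l-2r-1}$ instead. (Moreover, applying that formula requires first making the $m$ block increments $T_1=S_{i_1}, T_2=S_{i_2}-S_{i_1},\ldots$ symmetrically exchangeable, which the paper arranges by an extra averaging over $\mathrm{Sym}(m)$ -- another step missing from your outline.) Second, the reduction ``to a single fixed generic $V$ by rotation invariance of the distributional assumptions'' is unjustified: $(\pm\mathrm{Ex})$ does not make the law of $(X_1,\ldots,X_n)$ invariant under $\mathrm{O}(d)$. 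One must instead show directly that the integrand is the same constant for $\nu_{d-l}$-almost every $V$, which is what the distribution-free formulas of Corollary~\ref{cor:prob_walkbridge_intersects_V} deliver.
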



\begin{theorem}\label{theorem:gen_angle_sums_quermass_B}
Let $S_1,\ldots,S_n$ be a random walk in $\R^d$ whose increments $X_1,\ldots,X_n$ satisfy assumptions $\textup{($\pm$Ex)}$ and $\textup{(GP)}$. 
Then, for all $0\le j\le k\le d$, it holds that
\begin{multline*}
\E Z_{j,k}(C_n^B)=\E\sum_{F\in\cF_j(C_n^B)}U_k\big(T_F(C_n^B)\big) \\
=\frac{j!}{2^{n-j} n!}\left(\sum_{r=0}^\infty\stirlingb n{d-2r-1}\stirlingsecb {d-2r-1}j-\sum_{r=0}^\infty\stirlingb n{k-2r-1}\stirlingsecb {k-2r-1}j\right).
\end{multline*}
\end{theorem}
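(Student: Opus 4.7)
I would follow the strategy used for the type-$A+$ analogue (Theorem~\ref{theorem:gen_angle_sums_quermass_A}), replacing the exchangeability hypothesis (Ex) by symmetric exchangeability ($\pm$Ex). The starting point is to insert the Grassmannian representation~\eqref{eq:def_U} of $U_k$ into the definition of $Z_{j,k}(C_n^B)$ and apply Fubini:
$$\E Z_{j,k}(C_n^B) = \tfrac{1}{2}\int_{G(d,d-k)} \E \bigg(\sum_{F\in\cF_j(C_n^B)} \1_{\{T_F(C_n^B)\cap V \neq \{0\}\}}\bigg) \nu_{d-k}(\dd V).$$
Under (GP), each $j$-face $F$ equals $\pos\{S_{i_1},\ldots,S_{i_j}\}$ for a unique index set $I=\{i_1<\cdots<i_j\}\subseteq\{1,\ldots,n\}$, so the bracketed sum is an indicator sum over such index sets.

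\textbf{Reduction to an absorption event.}
For fixed $I$, the tangent cone decomposes as $T_F(C_n^B)=\lin F + \pos\{\pi(S_i) : i\notin I\}$, where $\pi$ denotes orthogonal projection onto $(\lin F)^\perp$. Using the elementary duality between intersection with a subspace and projection onto its orthogonal complement, the event $\{T_F(C_n^B)\cap V\neq\{0\}\}$ becomes an absorption event for the vectors $\{\rho(S_i):i\notin I\}$ in the subspace $V^\perp\cap(\lin F)^\perp$, which is generically of dimension $k-j$. By the rotational invariance of $\nu_{d-k}$ combined with $(\pm\mathrm{Ex})$ and (GP), the projected vectors form the partial sums of a symmetric random walk inheriting $(\pm\mathrm{Ex})$ and (GP) in that lower-dimensional subspace, so the absorption probability is computable via~\eqref{eq:abs_prob_walk} applied in dimension $k-j$. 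This produces the second sum $\sum_{r\geq 0}\stirlingb{n}{k-2r-1}\stirlingsecb{k-2r-1}{j}$, while the first sum $\sum_{r\geq 0}\stirlingb{n}{d-2r-1}\stirlingsecb{d-2r-1}{j}$ arises from the analogous unconditional computation that counts how often $I$ indexes a face of $C_n^B$ at all.

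\textbf{Combinatorial assembly.}
To assemble the final answer one combines, with care, the counting of subsets $I$, the sign and permutation symmetries from $(\pm\mathrm{Ex})$, and the identity
$$\stirlingsecb{m}{j}=\sum_{p=j}^m 2^{p-j}\binom{m}{p}\stirlingsec{p}{j},$$
which encodes the partitioning of the $n-j$ non-face indices into signed equivalence classes cut out by the projection. The difference structure of the formula is consistent with the boundary identity $Z_{j,d}(C_n^B)=0$ (the two sums coincide at $k=d$, since $U_d\equiv 0$ on full-dimensional cones), while the prefactor $\frac{j!}{2^{n-j}n!}$ emerges from combining the $\frac{2}{2^n n!}$ of~\eqref{eq:non-abs_prob_walk}, the $2^j$ absorbed into the sign accounting, and the $j!$ from unordering the indexing of $I$.

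\textbf{Main obstacle.}
The principal technical point is the reduction step: translating $\{T_F(C_n^B)\cap V\neq\{0\}\}$ explicitly into an absorption event for a projected symmetric random walk in dimension $k-j$, and verifying that the projection preserves both $(\pm\mathrm{Ex})$ and (GP). Compared with the type-$A+$ proof, the additional sign-flip symmetries require more careful bookkeeping, and the corresponding combinatorial identity involving the $B$-Stirling products $\stirlingb{n}{\cdot}\stirlingsecb{\cdot}{j}$ is where the $B$-analogue structure makes its decisive appearance.
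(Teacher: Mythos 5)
Your overall skeleton matches the paper's: decompose the tangent cone as $T_F(C_n^B)=\lin F+\big(C_n^B|(\lin F)^\perp\big)$, express $U_k$ as a Grassmannian integral, reduce to intersection/absorption probabilities, and obtain the difference structure from the face versus non-face dichotomy. But the central reduction step as you state it is wrong. When you project $S_1,\ldots,S_n$ onto $M:=(\lin F)^\perp=\lin^\perp\{S_{i_1},\ldots,S_{i_j}\}$, the points $S_i|M$, $i\notin I$, do \emph{not} form the partial sums of a single symmetric random walk inheriting $(\pm\mathrm{Ex})$ and (GP): since $S_{i_1}|M=\ldots=S_{i_j}|M=0$, each block of increments between consecutive face indices sums to zero after projection, so you obtain $j$ random \emph{bridges} plus one random walk (this is exactly Lemma~\ref{lemma:projection_walk}; moreover individual sign flips of $X_i$ for $i\le i_j$ change $M$ itself, so symmetric exchangeability is not inherited within those blocks). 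Consequently the single-walk absorption formula~\eqref{eq:abs_prob_walk} in dimension $k-j$ does not apply; it would produce a term of the form $\stirlingb{n-j}{\,\cdot\,}$ with no $\stirlingsecb{\cdot}{j}$ factor and cannot yield the product structure $\stirlingb{n}{\cdot}\stirlingsecb{\cdot}{j}$ of the theorem. What is actually needed is the joint absorption probability for a mixture of bridges and a walk (Theorem~\ref{theorem:absorpt_prob_joint_convex hull}, applied through Corollary~\ref{cor:prob_walkbridge_intersects_V}), which yields the mixed coefficients $P^{(n)}_{i_1,i_2-i_1,\ldots,i_j-i_{j-1}}(\cdot)$.

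The second gap is that your ``combinatorial assembly'' is asserted rather than performed. The decisive step is the identity~\eqref{eq:to_prove2_size_funct}, which converts the sum of $P^{(n)}_{l_1,\ldots,l_j}(\cdot)$ over all compositions $l_1+\ldots+l_{j+1}=n$ into $\frac{j!}{2^{n-j}n!}\stirlingb{n}{\cdot}\stirlingsecb{\cdot}{j}$; this is a generating-function computation (carried out via~\cite{GK20_Schlaefli_orthoschemes}), not a direct consequence of the defining relation $\stirlingsecb{m}{j}=\sum_p 2^{p-j}\binom{m}{p}\stirlingsec{p}{j}$ that you cite. You would also still need the final sign identities $\sum_k(-1)^k\stirlingb nk\stirlingsecb kj=0$ and $\sum_k\stirlingb nk\stirlingsecb kj=\frac{2^nn!}{2^jj!}\binom nj$ to pass from the sums indexed by $k+2r+1$ and $d+2r+1$ to the stated form with $d-2r-1$ and $k-2r-1$, and a separate treatment of the case $j=0$ (where the unique $0$-face is $\{0\}$ and the tangent cone is $C_n^B$ itself), which your argument does not cover.
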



These theorems are proven together with their $A$-analogues in the respective Sections~\ref{sec:proof_size_functionals} and~\ref{sec:proof_gen_angle_sums}. The conic Crofton formula yields also the following two corollaries whose proofs are postponed to Section~\ref{sec:conic_intrinsic_vol_sums_proofs}.

\begin{corollary}\label{cor:Y_intvol_B}
Let $S_1,\ldots,S_n$ be a random bridge in $\R^d$ whose increments $X_1,\ldots,X_n$ satisfy assumptions $\textup{($\pm$Ex)}$ and $\textup{(GP)}$. For all $0\le l \leq  m\le d$ we have
\begin{align*}
\E\sum_{F\in\cF_m(C_n^B)}\upsilon_l(F)&= \frac{2}{2^n n!}\stirlingb ml\left(\sum_{r=0}^\infty \stirlingb{n}{d-2r-1}\stirlingsecb{d-2r-1}{m}\right).
\end{align*}

\end{corollary}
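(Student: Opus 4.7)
The plan is to derive Corollary~\ref{cor:Y_intvol_B} from Theorem~\ref{theorem:exp_size_functionals_B} by inverting the conic Crofton formula~\eqref{eq:crofton}. Write $a_l:=\E\sum_{F\in\cF_m(C_n^B)}\upsilon_l(F)$ and $b_l:=\E Y_{m,l}(C_n^B)$.

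First, since every $F\in\cF_m(C_n^B)$ is $m$-dimensional, $\upsilon_k(F)=0$ for $k>m$ and \eqref{eq:crofton} collapses to $U_l(F)=\sum_{j\ge 1,\ j\text{ odd},\ l+j\le m}\upsilon_{l+j}(F)$. Summing over $F$ and taking expectation, I would get $b_l=a_{l+1}+a_{l+3}+\cdots$, which, for $l\ge 1$, telescopes to $a_l=b_{l-1}-b_{l+1}$ (with the convention $b_k:=0$ for $k\ge m$). Substituting the closed form of $b_l$ from Theorem~\ref{theorem:exp_size_functionals_B}, the shifted Stirling sums telescope via
\[
\sum_{r\ge 0}\stirlingb{m}{l+2r}-\sum_{r\ge 0}\stirlingb{m}{l+2r+2}=\stirlingb{m}{l},
\]
yielding the asserted formula for every $l\ge 1$.

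The case $l=0$ is the step I expect to be the main obstacle, since the recursion $a_l=b_{l-1}-b_{l+1}$ has no natural meaning there. The plan is to invoke the Gauss--Bonnet identity $\sum_{k\text{ even}}\upsilon_k(F)=\tfrac{1}{2}$, valid for any polyhedral cone $F$ that is not a linear subspace. Under~(GP) every $m$-face of $C_n^B$ is a.s.\ pointed, hence not a subspace, so this holds face-wise. Summing and using $\E f_m(C_n^B)=2b_0$ (since each non-subspace $m$-face contributes $U_0(F)=\tfrac{1}{2}$) produces $a_0=b_0-\sum_{r\ge 1}a_{2r}$. Plugging in the already established formula for $a_{2r}$ and for $b_0$, the identity reduces to
\[
\sum_{r\ge 0}\stirlingb{m}{2r+1}-\sum_{r\ge 1}\stirlingb{m}{2r}=\stirlingb{m}{0},
\]
an immediate consequence of evaluating \eqref{eq:def_stirling1b} at $t=-1$: the product $(-1+1)(-1+3)\cdots(-1+2m-1)$ vanishes for $m\ge 1$, so the signed sum $\sum_{l=0}^m(-1)^l\stirlingb{m}{l}$ equals $0$. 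The case $m=0$ is trivial.

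Thus the hard part is isolating $l=0$; once Gauss--Bonnet is invoked the remainder is polynomial bookkeeping. The identical strategy produces Corollary~\ref{cor:Y_intvol_A} in parallel, the only change being that $\stirlingb{}{}$ and $\stirlingsecb{}{}$ are replaced by $\stirling{}{}$ and $\stirlingsec{}{}$ (and the factor $2^{-n}$ is adjusted).
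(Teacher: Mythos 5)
Your proposal is correct and follows essentially the same route as the paper: Corollary~\ref{cor:Y_intvol_B} is deduced from Theorem~\ref{theorem:exp_size_functionals_B} by inverting the conic Crofton formula ($\upsilon_l(F)=U_{l-1}(F)-U_{l+1}(F)$ within $\lin F$), which is exactly the relation~\eqref{eq:conic_Crofton} the paper invokes, and your Gauss--Bonnet treatment of $l=0$ is equivalent to the paper's implicit use of $\upsilon_0(F)+\ldots+\upsilon_m(F)=1$ together with the known value of $\E f_m(C_n^B)$. The only caveat --- which affects the paper's own one-line proof just as much as yours --- is that Theorem~\ref{theorem:exp_size_functionals_B} is stated only for $m\le d-1$, so the boundary case $m=d$ of the corollary is not actually covered by either argument.
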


\begin{corollary}\label{corollary:gen_angle_sums_int_vol_B}
Let $S_1,\ldots,S_n$ be a random walk in $\R^d$ whose increments $X_1,\ldots,X_n$ satisfy assumptions $\textup{($\pm$Ex)}$ and $\textup{(GP)}$. For all $j\in\{0,\ldots,d-1\}$ we have
\begin{align*}
\E\sum_{F\in\cF_j(C_n^B)}\upsilon_k\big(T_F(C_n^B)\big)&=\frac{j!}{2^{n-j}n!}\stirlingb nk \stirlingsecb kj,\quad k\in\{j,\ldots,d-1\},\\
\E\sum_{F\in\cF_j(C_n^B)}\upsilon_d\big(T_F(C_n^B)\big)&=\frac{j!}{2^{n-j}n!}\sum_{r=0}^\infty(-1)^{r}\stirlingb n{d-1-r}\stirlingsecb {d-1-r}j.
\end{align*}
\end{corollary}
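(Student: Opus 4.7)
The strategy is to deduce both formulas from Theorem~\ref{theorem:gen_angle_sums_quermass_B} via the conic Crofton formula~\eqref{eq:crofton}, which implies for any polyhedral cone $C \subseteq \R^d$ the single-term telescoping identity
\begin{equation*}
\upsilon_{k+1}(C) = U_k(C) - U_{k+2}(C), \qquad 0 \leq k \leq d-1,
\end{equation*}
with the convention $U_{d+1}(C) := 0$. Applied to each tangent cone $T_F(C_n^B)$, summed over $F \in \cF_j(C_n^B)$, and in expectation, this becomes
\begin{equation*}
\E \sum_{F \in \cF_j(C_n^B)} \upsilon_{k+1}(T_F(C_n^B)) = \E Z_{j,k}(C_n^B) - \E Z_{j,k+2}(C_n^B).
\end{equation*}

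For the first formula, fix $k' \in \{j,\ldots,d-1\}$ and substitute Theorem~\ref{theorem:gen_angle_sums_quermass_B} with $k = k'-1$ and $k = k'+1$. The $k$-independent piece $\sum_r \stirlingb{n}{d-2r-1}\stirlingsecb{d-2r-1}{j}$ cancels, and after re-indexing $r \mapsto r-1$ the two remaining Stirling sums collapse, leaving only the single term $\stirlingb{n}{k'}\stirlingsecb{k'}{j}$ multiplied by the prefactor $j!/(2^{n-j}n!)$. A mild subtlety at the lower boundary $k' = j$ is that Theorem~\ref{theorem:gen_angle_sums_quermass_B} is only stated for $k \geq j$; however, its right-hand side extends formally to $k = j-1$ because $\stirlingsecb{s}{j} = 0$ for $s < j$ kills every summand of the second Stirling sum. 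This extension is consistent with a direct computation: under $\textup{(GP)}$, $T_F(C_n^B)$ has lineality exactly $j$ and is a.s.\ not a linear subspace, so the Sommerville-type identity gives $U_{j-1}(T_F(C_n^B)) = 1/2$ a.s., whence $\E Z_{j,j-1}(C_n^B) = \tfrac{1}{2}\E f_j(C_n^B)$, which matches via Theorem~\ref{theorem:exp_size_functionals_B} (with $l=0$, $m=j$) upon using the identity $\sum_{r \geq 0} \stirlingb{j}{2r+1} = 2^{j-1} j!$.

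For the second formula, I observe that under $\textup{(GP)}$ the tangent cone $T_F(C_n^B)$ is a.s.\ $d$-dimensional and not a linear subspace, so by $\upsilon_{\dim T}(T) = \alpha(T) = U_{\dim T - 1}(T)$ one has $\upsilon_d(T_F(C_n^B)) = U_{d-1}(T_F(C_n^B))$ a.s., and therefore $\E \sum_F \upsilon_d(T_F(C_n^B)) = \E Z_{j,d-1}(C_n^B)$. Plugging $k = d-1$ into Theorem~\ref{theorem:gen_angle_sums_quermass_B} and merging the two Stirling sums, whose indices $d-1, d-3, \ldots$ and $d-2, d-4, \ldots$ interlace with opposite signs, yields the single alternating sum $\sum_{r \geq 0}(-1)^r \stirlingb{n}{d-1-r}\stirlingsecb{d-1-r}{j}$, which matches the stated expression. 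The main obstacle is really just careful bookkeeping of the index shifts and signs in the Stirling-number sums (and the small verification that extending the formula of Theorem~\ref{theorem:gen_angle_sums_quermass_B} down to $k = j-1$ is legitimate); no new probabilistic input is required beyond Theorem~\ref{theorem:gen_angle_sums_quermass_B} and elementary conic integral geometry.
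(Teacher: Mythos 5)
Your proposal is correct and follows essentially the same route as the paper: both deduce the corollary from Theorem~\ref{theorem:gen_angle_sums_quermass_B} via the conic Crofton relations $\upsilon_k=U_{k-1}-U_{k+1}$ and $\upsilon_d=U_{d-1}$, with the same telescoping of the Stirling-number sums. The only (equivalent) variation is at the boundary case $k=j$, where the paper uses $\upsilon_j(T_F)+\ldots+\upsilon_d(T_F)=1$ together with the expected face count $\E f_j(C_n^B)$, while you use $U_{j-1}\big(T_F(C_n^B)\big)=\tfrac12$ (the same lineality/Gauss--Bonnet input) to justify extending the formula of Theorem~\ref{theorem:gen_angle_sums_quermass_B} to $k=j-1$; both reduce to the same computation.
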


\begin{remark}
In the case when the increments $X_1,\ldots,X_n$ are independent and standard Gaussian, one can show that the random cone $C_n^B$ can be viewed as a Gaussian projection of the deterministic cone  $\{x_1\geq \ldots\geq x_n\geq 0\}\subseteq \R^n$, which is a Weyl chamber of type $B$. Thus, in the Gaussian special case, Corollary~\ref{corollary:gen_angle_sums_int_vol_A} can be deduced from a formula for the intrinsic conic volume sums of the Weyl chambers~\cite[Theorem~3.3]{GK20_Schlaefli_orthoschemes} together with the invariance property of such sums under Gaussian projections~\cite[Corollary~4.18]{GKZ20_Random_Polytopes}. However, this method of proof does not extend to the non-Gaussian case.
\end{remark}


\begin{remark}\label{remark:conditioned_case_B}
The analogous expectations for $\widetilde C_n^B$ instead of $C_n^B$ are easily  obtained by dividing the above formulas by $\bP[C_n^B\neq\R^d]$ (see~\eqref{eq:non-abs_prob_walk}) since for any non-negative functional $\varphi$ on the set of polyhedral cones with $\varphi(\R^d)=0$ we have that $\E\varphi (C_n^B)= \E\varphi(\widetilde C_n^B) \cdot \bP[C_n^B\neq\R^d]$.
\end{remark}

\subsection{Special cases: A collection of expectations}\label{sec:exp_special_cases}
The formulas for the expected size functionals from the previous section comprise a lot of interesting special cases, which we state and prove below. We will present the formulas for the $A$-case and the $B$-case together.
In the subsequent results we make the following convention.  For the Weyl random cones of type $A+$, we tacitly assume that the increments $X_1,\ldots,X_n$ satisfy assumptions $\textup{(Ex)}$, $\textup{(Br)}$ and $\textup{(GP')}$, while for the Weyl random cones of type $B+$, we assume that $X_1,\ldots,X_n$ satisfy assumptions $\textup{($\pm$Ex)}$ and $\textup{(GP)}$.

\begin{corollary}\label{cor:exp_faces_A}
Let $k\in\{0,\ldots,d-1\}$ be given. Then, it holds that
\begin{align*}
\E f_k(C_n^A)
&	=\frac{2(k+1)!}{n!}\sum_{r=0}^\infty \stirling{n}{d-2r}\stirlingsec{d-2r}{k+1},\\
\E f_k(C_n^B)
&	=\frac{2\cdot k!}{2^{n-k}\cdot n!}\sum_{r=0}^\infty \stirlingb{n}{d-2r-1}\stirlingsecb{d-2r-1}{k}.
\end{align*}
\end{corollary}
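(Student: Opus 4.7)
The plan is to identify the expected face numbers with the expected size functionals $Y_{k,0}$ from Theorems~\ref{theorem:exp_size_functionals_A} and~\ref{theorem:exp_size_functionals_B} whenever $k \ge 1$, and to treat the boundary case $k = 0$ directly via the non-absorption probabilities~\eqref{eq:non-abs_prob_bridge} and~\eqref{eq:non-abs_prob_walk}.

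The key deterministic observation is that for any polyhedral cone $F \subseteq \R^d$ which is neither a linear subspace nor $\{0\}$, the definition~\eqref{eq:def_U} gives $U_0(F) = \tfrac12$, because $G(d,d) = \{\R^d\}$ is a singleton and $F \cap \R^d = F \neq \{0\}$. Under the general-position hypotheses (GP') and (GP), the random cones $C_n^A$ and $C_n^B$ are almost surely either all of $\R^d$ or pointed: this follows from the discussion preceding~\eqref{eq:non-abs_prob_bridge}, since $\{C \neq \R^d\} = \{0 \notin \conv\{\ldots\}\}$ a.s., and the latter event precludes any non-trivial non-negative linear relation among the generators, forcing pointedness. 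Consequently every $k$-face with $k \ge 1$ is not a linear subspace, and the deterministic identity $f_k(C) = 2\, Y_{k,0}(C)$ holds almost surely for both random cones.

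For $k \ge 1$ I would now apply Theorems~\ref{theorem:exp_size_functionals_A} and~\ref{theorem:exp_size_functionals_B} with $m = k$, $l = 0$, and simplify the Stirling factors via generating polynomials. Substituting $t = 1$ and $t = -1$ in~\eqref{eq:def_stirling1_polynomial} gives $\sum_j \stirling{k+1}{j} = (k+1)!$, and, for $k \ge 1$, $\sum_j (-1)^j \stirling{k+1}{j} = 0$ (the factor $(t+1)$ vanishes at $t=-1$). Combined with $\stirling{k+1}{0}=0$, this yields
$$\sum_{r \ge 1} \stirling{k+1}{2r} = \tfrac12 (k+1)!.$$
An analogous evaluation of~\eqref{eq:def_stirling1b} at $t = \pm 1$ gives $\sum_j \stirlingb{k}{j} = 2^k k!$ and $\sum_j (-1)^j \stirlingb{k}{j} = 0$ for $k \ge 1$, hence
$$\sum_{r \ge 0} \stirlingb{k}{2r+1} = 2^{k-1} k!.$$
Plugging these two identities into $2\,\E Y_{k,0}(C_n^A)$ and $2\,\E Y_{k,0}(C_n^B)$ reproduces exactly the two formulas claimed in Corollary~\ref{cor:exp_faces_A} for $k \ge 1$.

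The boundary case $k = 0$ is handled separately: on the event $\{C_n^A \neq \R^d\}$ general position forces $C_n^A$ to be pointed with unique $0$-face $\{0\}$, so $f_0(C_n^A) = 1$, while $f_0(C_n^A) = 0$ on the complement; hence $\E f_0(C_n^A) = \P[C_n^A \neq \R^d]$, which by~\eqref{eq:non-abs_prob_bridge} together with $\stirlingsec{m}{1} = 1$ for $m \ge 1$ and $\stirlingsec{0}{1} = 0$ equals the right-hand side of the corollary at $k = 0$. The $B$-case is identical, using~\eqref{eq:non-abs_prob_walk} together with $\stirlingsecb{m}{0} = 1$ for $m \ge 0$. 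I do not anticipate a serious obstacle; the only genuinely delicate point is the $U_0 = \tfrac12$ reduction, which is where the general-position hypothesis is actually used, and the rest is bookkeeping of even versus odd indices in the two Stirling sums.
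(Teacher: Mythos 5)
Your proposal is correct and follows essentially the same route as the paper's own proof: the case $k=0$ is reduced to the non-absorption probabilities \eqref{eq:non-abs_prob_bridge} and \eqref{eq:non-abs_prob_walk}, and for $k\ge 1$ the identity $f_k = 2Y_{k,0}$ (valid since general position makes the cones pointed, so no $k$-face is a linear subspace) is combined with Theorems~\ref{theorem:exp_size_functionals_A} and~\ref{theorem:exp_size_functionals_B} and the evaluations of the Stirling generating polynomials at $t=\pm 1$. The extra detail you supply on the $U_0=\tfrac12$ reduction and on pointedness matches what the paper delegates to Remark~\ref{remark:pointed}.
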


\begin{remark}\label{remark:pointed}
In the proof of this corollary and other results, we will use the following observation.
Let $x_1,\ldots, x_N\in \R^d$  be vectors in general position meaning that any $d$ or less of them are linearly independent.
Then, it is known that the  positive hull $C:= \pos\{x_1,\ldots,x_N\}$
is either a linear subspace or a pointed polyhedral cone. The latter means that $C$ does not contain
a linear subspace different from $\{0\}$. This claim follows, for example, from~\cite[Eq.~(14)]{HugSchneider2016}.
In particular, this observation applies to the cones $C_n^A$ and $C_n^B$: With probability $1$, any of these cones is either equal to $\R^d$ or  is pointed.  It follows that any face of $C_n^A$ or  $C_n^B$ of dimension $k\in \{1,\ldots,d-1\}$ is a.s.~not a linear subspace.
\end{remark}
\begin{proof}[Proof of Corollary~\ref{cor:exp_faces_A}]
For $k=0$ we have $\E f_0(C_n^A) = \P[C_n^A \neq \R^d]$ and the claim follows from~\eqref{eq:non-abs_prob_bridge}.
Therefore let $k\in \{1,\ldots,d-1\}$. Replace $m$ by $k$ and $l$ by $0$ in Theorem~\ref{theorem:exp_size_functionals_A}. Then, \eqref{eq:size_fct_A} yields
\begin{align*}
\E f_k(C_n^A)
&	=2\E Y_{k,0}(C_n^A)=\frac{4}{n!}\left(\sum_{r=1}^\infty \stirling{k+1}{2r}\right)\left(\sum_{r=0}^\infty \stirling{n}{d-2r}\stirlingsec{d-2r}{k+1}\right)\\
&	=\frac{2\cdot (k+1)!}{n!}\sum_{r=0}^\infty \stirling{n}{d-2r}\stirlingsec{d-2r}{k+1}.
\end{align*}
Note that we used in the first equation that, for $k\in\{1,\ldots,d-1\}$, all $k$-faces of $C_n^A$ are not linear subspaces a.s.~(see Remark~\ref{remark:pointed}), while the last equation uses that
\begin{align*}
\stirling {k+1}1+\stirling {k+1}3+\ldots=\stirling {k+1}2+\stirling {k+1}4+\ldots=\frac{(k+1)!}2.
\end{align*}
The $B$-case follows similarly from Theorem~\ref{theorem:exp_size_functionals_B} using that
$
\stirlingb k0+\stirlingb k2+\ldots=\stirlingb k1+\stirlingb k3+\ldots=2^{k-1}k!.
$
\end{proof}

\begin{remark}
For comparison, let us mention that in~\cite{KVZ17} it has been shown that for the partial sums of the random variables $X_1,\ldots,X_n$ satisfying
$(\text{Ex})$ and $(\text{GP})$ we have
\begin{equation}\label{eq:E_F_k_C_n_main_theorem}
\E [f_k(\conv\{0,S_1,\ldots,S_n\})] = \frac{2\cdot k!}{n!} \sum_{l=0}^{\infty}\stirling{n+1}{d-2l}  \stirlingsec{d-2l}{k+1},
\quad
0\leq k\leq d-1.
\end{equation}
This formula also applies to a random bridge $S_1,\ldots, S_{n+1}$ with $S_{n+1}=0$; see~\cite[Theorem~5.1]{KVZ17}.
\end{remark}

\begin{remark}
Let us mention some asymptotic results on $\E f_k(C_n^A)$ which can be derived from Corollary~\ref{cor:exp_faces_A}. Consider first a setting where $n\to\infty$, while $d\in \N$ and $k\in \{0,\ldots,d-1\}$ stay fixed. Using the formula $\stirling{n}{d} \sim \frac{(n-1)!}{(d-1)!} (\log n)^{d-1}$ as in~\cite[Section~5.1]{KVZ15} and~\cite[Remark~1.4]{KVZ17}, we obtain
$$
\lim_{n\to\infty} \E [f_k (\widetilde C_n^A)] = \lim_{n\to\infty} \E [f_k (C_n^A)| C_n^A \neq \R^d] = (k+1)! \stirlingsec{d}{k+1},
$$
where $\widetilde C_n^A$ are the conditioned cones defined in Remark~\ref{remark:conditioned_case_A}.
The asymptotic behaviour of $\E f_k(C_n^A)$ in regimes when $n,d\to\infty$ in a coordinated way can be investigated using the methods of~\cite{kabluchko_marynych_lah} since the expression for $\E f_k(C_n^A)$ given in Corollary~\ref{cor:exp_faces_A} can be rewritten in terms of the Lah distribution introduced in~\cite{kabluchko_marynych_lah}. For example, if $k\in \N_0$ is fixed, while $d\to\infty$ and $n\to\infty$ such that $(\log n)/d \to  \gamma$ for some $\gamma\geq 0$, then Theorem~6.2 of~\cite{kabluchko_marynych_lah} implies that
$$
\lim_{n,d\to\infty} \frac{\E f_k(C_n^A)}{\binom {n-1}{k}} =
\begin{cases}
1, &\text{ if } \gamma < 1/(k+1),\\
0, &\text{ if } \gamma > 1/(k+1).
\end{cases}
$$
\end{remark}

\begin{corollary}\label{cor:quermass}
Let $k\in\{0,\ldots,d\}$ be given. Then, it holds that
\begin{align*}
\E U_k(C_n^A)
	&=\begin{dcases}
	\frac{1}{n!}\bigg(\sum_{r=1}^\infty\stirling n{k+2r}+\sum_{r=1}^\infty\stirling n{d+2r}\bigg),	& d-k\text { odd},\\
	\frac{1}{n!}\bigg(\stirling{n}{k+2}+\stirling{n}{k+4}+\ldots+\stirling{n}{d}\bigg),	& d-k=0\text { or even},
	\end{dcases}\\
\E U_k(C_n^B)
	&=\begin{dcases}
	\frac{1}{2^nn!}\bigg(\sum_{r=0}^\infty\stirlingb n{k+2r+1}+\sum_{r=0}^\infty\stirlingb n{d+2r+1}\bigg),	& d-k\text { odd},\\
	\frac{1}{2^nn!}\big(\stirlingb{n}{k+1}+\stirlingb{n}{k+3}+\ldots+\stirlingb{n}{d-1}\big),	& d-k=0\text { or even},
	\end{dcases}
\end{align*}
while in the conditioned case we have
\begin{align*}
\E U_k(\widetilde C_n^A)
&	=\frac{\sum_{r=0}^\infty\stirling n{d-2r}- \sum_{r=0}^\infty\stirling n {k-2r}}{2\sum_{r=0}^\infty \stirling n{d-2r}},\\
\E U_k(\widetilde C_n^B)
&	=\frac{\sum_{r=0}^\infty\stirlingb n{d-2r-1}- \sum_{r=0}^\infty\stirlingb n {k-2r-1}}{2\sum_{r=0}^\infty \stirlingb n{d-2r-1}}.
\end{align*}
\end{corollary}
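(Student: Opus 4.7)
The plan is to deduce Corollary~\ref{cor:quermass} as the $j=0$ instance of Theorems~\ref{theorem:gen_angle_sums_quermass_A} and~\ref{theorem:gen_angle_sums_quermass_B}, handling the degenerate event $\{C_n^{A}=\R^d\}$ (resp.~$\{C_n^{B}=\R^d\}$) separately.

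The first step is to relate $U_k$ to $Z_{0,k}$. By the general position hypothesis and Remark~\ref{remark:pointed}, with probability one the cone $C_n^A$ is either equal to $\R^d$ or pointed. When $C_n^A$ is pointed, its unique $0$-dimensional face is $\{0\}$ and the tangent cone $T_{\{0\}}(C_n^A)=C_n^A$, so $Z_{0,k}(C_n^A)=U_k(C_n^A)$; when $C_n^A=\R^d$ there are no $0$-dimensional faces and $Z_{0,k}(C_n^A)=0$. Hence
\begin{align*}
Z_{0,k}(C_n^A)=U_k(C_n^A)\,\1_{\{C_n^A\neq\R^d\}}\qquad\text{a.s.,}
\end{align*}
and the same identity holds in the $B$-case. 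Since $\stirlingsec{m}{1}=1$ for $m\ge 1$ and $\stirlingsec{0}{1}=0$, and $\stirlingsecb{m}{0}=1$ for all $m\ge 0$, specializing Theorems~\ref{theorem:gen_angle_sums_quermass_A} and~\ref{theorem:gen_angle_sums_quermass_B} to $j=0$ yields
\begin{align*}
\E Z_{0,k}(C_n^A)&=\frac{1}{n!}\Bigl(\textstyle\sum_{r=0}^\infty \stirling{n}{d-2r}-\sum_{r=0}^\infty \stirling{n}{k-2r}\Bigr),\\
\E Z_{0,k}(C_n^B)&=\frac{1}{2^n n!}\Bigl(\textstyle\sum_{r=0}^\infty \stirlingb{n}{d-2r-1}-\sum_{r=0}^\infty \stirlingb{n}{k-2r-1}\Bigr).
\end{align*}

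The conditioned formulas then follow immediately by dividing these expressions by $\P[C_n^A\neq\R^d]$ and $\P[C_n^B\neq\R^d]$ as given in~\eqref{eq:non-abs_prob_bridge} and~\eqref{eq:non-abs_prob_walk} (here one uses that $\stirling n0=0$ and $\stirlingb n{-1}=0$ for $n\ge 1$, so that the sums in the numerators match the stated ones).

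For the unconditioned expectations, I would add $U_k(\R^d)\,\P[C_n^A=\R^d]$ and similarly for $B$. Recall $U_k(\R^d)=1$ when $d-k$ is positive and odd, and $U_k(\R^d)=0$ otherwise. In the even (or zero) case this extra term vanishes and the identity $\sum_{r=0}^\infty\stirling n{d-2r}=\stirling n d+\stirling n{d-2}+\ldots$ together with $\sum_{r=0}^\infty\stirling n{k-2r}=\stirling n k+\stirling n{k-2}+\ldots$ makes their difference telescope to $\stirling n{k+2}+\stirling n{k+4}+\ldots+\stirling n d$, which is the stated expression. In the odd case, one combines the contribution $\frac{2}{n!}\sum_{r\ge 1}\stirling{n}{d+2r}$ from $\P[C_n^A=\R^d]$ with the $j=0$ formula and invokes the parity identity
\begin{align*}
\sum_{j\equiv p\, (\bmod\, 2)}\stirling{n}{j}=\frac{n!}{2},\qquad n\ge 2,
\end{align*}
which follows from evaluating~\eqref{eq:def_stirling1_polynomial} at $t=\pm 1$. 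This rewrites $\sum_{r\ge 0,\,k-2r\ge 1}\stirling{n}{k-2r}$ in terms of its complement $\sum_{r\ge 1}\stirling{n}{k+2r}$, and after a short rearrangement one obtains the desired form $\frac{1}{n!}(\sum_{r\ge 1}\stirling{n}{k+2r}+\sum_{r\ge 1}\stirling{n}{d+2r})$. The $B$-case is completely analogous, using the identity $\sum_{j\equiv p\,(\bmod\,2)}\stirlingb{n}{j}=2^{n-1}n!$ obtained by evaluating~\eqref{eq:def_stirling1b} at $t=\pm 1$.

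The main bookkeeping obstacle is the parity case analysis and making sure the parity identity for Stirling numbers is applied with the correct offset so that the two seemingly different formulas (odd vs even $d-k$) both emerge from a single calculation; the $B$-case is verbatim the same with $n!$ replaced by $2^{n-1}n!$ in the parity identity.
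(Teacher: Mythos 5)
Your proposal is correct and follows essentially the same route as the paper: specialize Theorems~\ref{theorem:gen_angle_sums_quermass_A} and~\ref{theorem:gen_angle_sums_quermass_B} to $j=0$, identify $Z_{0,k}$ with $U_k\cdot\1_{\{\cdot\neq\R^d\}}$ via Remark~\ref{remark:pointed}, add back $U_k(\R^d)\,\P[C_n^A=\R^d]$ using~\eqref{eq:abs_prob_bridge}, and divide by the non-absorption probability for the conditioned cones. The parity bookkeeping you carry out explicitly (via $t=\pm1$ in~\eqref{eq:def_stirling1_polynomial} and~\eqref{eq:def_stirling1b}) is exactly what the paper leaves implicit.
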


\begin{proof}
The $A$-case follows from Theorem~\ref{theorem:gen_angle_sums_quermass_A} in the special case $j=0$. In particular, we know that $C_n^A$ is pointed if and only if $C_n^A\neq\R^d$. This yields
\begin{align*}
\E\sum_{F\in\cF_0(C_n^A)}U_k\big(T_F(C_n^A)\big)=\E U_k\big(T_{\{0\}}(C_n^A)\big)\1_{\{C_n^A\neq\R^d\}}=\E U_k(C_n^A)\1_{\{C_n^A\neq\R^d\}},
\end{align*}
which leads us to
\begin{align*}
\E U_k(C_n^A)=\E\sum_{F\in\cF_0(C_n^A)}U_k\big(T_F(C_n^A)\big)+U_k(\R^d)\bP\big[C_n^A=\R^d\big].
\end{align*}
Applying Theorem~\ref{theorem:gen_angle_sums_quermass_A} together with the formula for the absorption probability~\eqref{eq:abs_prob_bridge} and the fact that $U_k(\R^d)=1$ if $d-k>0$ and odd (and otherwise $U_k(\R^d)=0$) proves the first claim. The formula for $\widetilde C_n^A$ follows from Theorem~\ref{theorem:gen_angle_sums_quermass_A}  in the special case $j=0$ together with Remark~\ref{remark:conditioned_case_A}.
The $B$-case is proven analogously using Theorem~\ref{theorem:gen_angle_sums_quermass_B}.
\end{proof}

%

\begin{corollary}\label{cor:intr_vol_A}
For the expected conic intrinsic volumes, it holds that
\begin{align*}
\E \upsilon_d(C_n^A)&=\frac{1}{n!}\sum_{r=1}^\infty \stirling n{d+r},\qquad
\E \upsilon_k(C_n^A)=\frac{1}{n!}\stirling n{k+1},\quad k\in\{0,\ldots,d-1\},\\
\E \upsilon_d(C_n^B)&=\frac{1}{2^nn!}\sum_{r=0}^\infty\stirlingb n{d+r},
\qquad
\E \upsilon_k(C_n^B)
	= \frac{1}{2^nn!}\stirlingb{n}{k},\quad k\in\{0,\ldots,d-1\}.
\end{align*}
In the conditioned case,  we have
\begin{align*}
\E \upsilon_k(\widetilde C_n^A)&=\frac{\stirling n{k+1}}{2\sum_{r=0}^\infty\stirling n {d-2r}},
\quad
\E \upsilon_k(\widetilde C_n^B)
	=\frac{\stirlingb{n}{k}}{2\sum_{r=0}^\infty \stirlingb{n}{d-2r-1}},\quad k\in\{0,\ldots,d-1\},\\
\E \upsilon_d(\widetilde C_n^A)&=\frac{\sum_{r=0}^\infty (-1)^r\stirling n{d-r}}{2\sum_{r=0}^\infty\stirling n{d-2r}},
\quad
\E \upsilon_d(\widetilde C_n^B)=\frac{\sum_{r=0}^{\infty}(-1)^r\stirlingb n{d-r-1}}{2\sum_{r=0}^\infty \stirlingb{n}{d-2r-1}}.
\end{align*}
\end{corollary}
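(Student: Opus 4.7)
The plan is to derive Corollary~\ref{cor:intr_vol_A} by specializing Corollary~\ref{corollary:gen_angle_sums_int_vol_A} (and Corollary~\ref{corollary:gen_angle_sums_int_vol_B} in the $B$-case) to $j=0$, combined with the absorption probability formulas~\eqref{eq:non-abs_prob_bridge}--\eqref{eq:abs_prob_walk}. The starting observation is that by Remark~\ref{remark:pointed}, $C_n^A$ is almost surely either equal to $\R^d$ or pointed. If pointed, then $\cF_0(C_n^A)=\{\{0\}\}$ and $T_{\{0\}}(C_n^A)=C_n^A$; if $C_n^A=\R^d$, then $\cF_0(C_n^A)=\emptyset$. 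Hence
\begin{equation*}
\sum_{F\in\cF_0(C_n^A)}\upsilon_k\big(T_F(C_n^A)\big)=\upsilon_k(C_n^A)\,\1_{\{C_n^A\neq\R^d\}}\quad\text{a.s.},
\end{equation*}
and analogously for $C_n^B$.

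For $k\in\{0,\dots,d-1\}$ one has $\upsilon_k(\R^d)=0$, so the indicator can be dropped, and using $\stirlingsec{k+1}{1}=1$, Corollary~\ref{corollary:gen_angle_sums_int_vol_A} immediately yields $\E\upsilon_k(C_n^A)=\frac{1}{n!}\stirling{n}{k+1}$. Dividing by $\bP[C_n^A\neq\R^d]$ from~\eqref{eq:non-abs_prob_bridge} via Remark~\ref{remark:conditioned_case_A} produces the conditioned formula $\E\upsilon_k(\widetilde C_n^A)$. The $B$-case proceeds identically, using $\stirlingsecb{k}{0}=1$, Corollary~\ref{corollary:gen_angle_sums_int_vol_B}, Remark~\ref{remark:conditioned_case_B}, and~\eqref{eq:non-abs_prob_walk}.

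For $k=d$ the indicator cannot be dropped because $\upsilon_d(\R^d)=1$. Instead I would write
\begin{equation*}
\E\upsilon_d(C_n^A)=\E\sum_{F\in\cF_0(C_n^A)}\upsilon_d\big(T_F(C_n^A)\big)+\bP[C_n^A=\R^d],
\end{equation*}
where the first summand is provided by Corollary~\ref{corollary:gen_angle_sums_int_vol_A} (noting $\stirlingsec{d-r}{1}=\1_{\{d-r\ge 1\}}$) and the second by~\eqref{eq:abs_prob_bridge}. The two contributions must be combined into the alternating-free form $\frac{1}{n!}\sum_{r=1}^\infty\stirling{n}{d+r}$. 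To do so I would invoke the identity $\sum_{k=0}^n(-1)^k\stirling{n}{k}=0$ for $n\ge 2$, which is immediate from~\eqref{eq:def_stirling1_polynomial} at $t=-1$ since the product $t(t+1)\cdots(t+n-1)$ then contains the factor $0$. This identity rewrites as $\sum_{r=0}^\infty(-1)^r\stirling{n}{d-r}=-\sum_{r=1}^\infty(-1)^r\stirling{n}{d+r}$, after which cancellation with $2\sum_{r=1}^\infty\stirling{n}{d+2r}$ yields the desired telescoping sum. The conditioned formula for $\widetilde C_n^A$ follows from $\E[\upsilon_d(C_n^A)\,\1_{\{C_n^A\neq\R^d\}}]=\frac{1}{n!}\sum_{r=0}^\infty(-1)^r\stirling{n}{d-r}$ by dividing by $\bP[C_n^A\neq\R^d]$ directly.

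The $B$-analogue for $k=d$ is entirely parallel, using Corollary~\ref{corollary:gen_angle_sums_int_vol_B},~\eqref{eq:non-abs_prob_walk}--\eqref{eq:abs_prob_walk}, and the identity $\sum_{k=0}^n(-1)^k\stirlingb{n}{k}=0$ for $n\ge 1$, obtained by setting $t=-1$ in~\eqref{eq:def_stirling1b} (since the factor $t+1$ vanishes). The only genuine (but minor) obstacle is the sign-bookkeeping in the $k=d$ case to verify that the combined series collapses correctly; everything else is a direct specialization of the already established corollaries together with the absorption probabilities.
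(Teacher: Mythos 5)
Your proposal is correct, and it agrees with the paper's proof for $k\in\{0,\ldots,d-1\}$ and for the conditioned cones: both arguments reduce to the $j=0$ case of Corollaries~\ref{corollary:gen_angle_sums_int_vol_A} and~\ref{corollary:gen_angle_sums_int_vol_B} via the identity $\E\upsilon_k(C_n^A)=\E\sum_{F\in\cF_0(C_n^A)}\upsilon_k(T_F(C_n^A))+\upsilon_k(\R^d)\P[C_n^A=\R^d]$. The only place where you genuinely diverge is the case $k=d$: the paper obtains $\E\upsilon_d(C_n^A)$ from the closure relation $\upsilon_0(C)+\ldots+\upsilon_d(C)=1$ together with the already-established formulas for $k<d$ and the evaluation $\sum_{k}\stirling nk=n!$ at $t=1$, whereas you instead invoke the $k=d$ formula of Corollary~\ref{corollary:gen_angle_sums_int_vol_A}, add the absorption probability~\eqref{eq:abs_prob_bridge}, and collapse the result using $\sum_{k=0}^n(-1)^k\stirling nk=0$ (evaluation at $t=-1$). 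I checked your sign bookkeeping: $\sum_{r=0}^\infty(-1)^r\stirling n{d-r}=-\sum_{r=1}^\infty(-1)^r\stirling n{d+r}$, and adding $2\sum_{r=1}^\infty\stirling n{d+2r}$ indeed yields $\sum_{r=1}^\infty\stirling n{d+r}$; the $B$-analogue works the same way. Your route costs a little more computation but has a small advantage: it produces $\E[\upsilon_d(C_n^A)\1_{\{C_n^A\neq\R^d\}}]$ directly, so the conditioned formula for $\E\upsilon_d(\widetilde C_n^A)$ follows by simple division, correctly sidestepping the fact that Remark~\ref{remark:conditioned_case_A} does not literally apply to $\upsilon_d$ (since $\upsilon_d(\R^d)=1\neq 0$) --- a point the paper glosses over. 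Conversely, the paper's use of $\sum_k\upsilon_k=1$ is shorter and avoids the alternating-sum manipulation. Both are complete proofs.
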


\begin{proof}
We consider only  the $A$-case. Essentially, the claims follow from Corollary~\ref{corollary:gen_angle_sums_int_vol_A} in the case $j=0$. Indeed, in the same way as in the proof of Corollary~\ref{cor:quermass}, we observe that
\begin{align*}
\E\upsilon_k(C_n^A)=\E\sum_{F\in\cF_0(C_n^A)}\upsilon_k\big(T_F(C_n^A)\big)+\upsilon_k(\R^d)\bP\big[C_n^A=\R^d\big]=\E\sum_{F\in\cF_0(C_n^A)}\upsilon_k\big(T_F(C_n^A)\big),
\end{align*}
for $k\in\{0,\ldots,d-1\}$.
Thus, applying Corollary~\ref{corollary:gen_angle_sums_int_vol_A} yields the formulas for $\E \upsilon_k(C_n^A)$ in the case $k\in\{0,\ldots,d-1\}$ . The case $k=d$ follows from the relation $\upsilon_0(C)+\ldots+\upsilon_d(C)=1$, which holds for any cone $C\subseteq\R^d$. The formulas for $\widetilde C_n^A$ follow from  Remark~\ref{remark:conditioned_case_A}. The $B$-case is proven analogously using Corollary~\ref{corollary:gen_angle_sums_int_vol_B}.
\end{proof}

\begin{corollary}
Let $k\in\{1,\ldots,d-1\}$ be given. Then, it holds that
\begin{align*}
\E \Lambda_k(C_n^A)&	=\frac{2}{n!}\sum_{r=0}^\infty \stirling{n}{d-2r}\stirlingsec{d-2r}{k+1},\quad\E \Lambda_k(C_n^B)	=\frac{2}{2^nn!}\sum_{r=0}^\infty \stirlingb n{d-2r-1}\stirlingsecb {d-2r-1}k.
\end{align*}
\end{corollary}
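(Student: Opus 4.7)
The strategy is a direct reduction to Theorems~\ref{theorem:exp_size_functionals_A} and~\ref{theorem:exp_size_functionals_B} via the identity $\Lambda_k = Y_{k,k-1}$ recorded in Section~\ref{sec:geo_functionals}. Recall that for any face $F\in\cF_k(C)$, the solid angle satisfies $\alpha(F)=U_{k-1}(F)$, so that $\Lambda_k(C)=\sum_{F\in\cF_k(C)}\alpha(F)=Y_{k,k-1}(C)$. Since $1\le k\le d-1$, the pair $(m,l)=(k,k-1)$ falls in the admissible range $0\le l<m\le d-1$ of both theorems, so both formulas for $\E Y_{m,l}$ apply directly.

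In the $A$-case, substituting $m=k$ and $l=k-1$ in \eqref{eq:size_fct_A} gives
\begin{align*}
\E\Lambda_k(C_n^A)=\frac{2}{n!}\biggl(\sum_{r=1}^\infty \stirling{k+1}{k-1+2r}\biggr)\biggl(\sum_{r=0}^\infty \stirling{n}{d-2r}\stirlingsec{d-2r}{k+1}\biggr).
\end{align*}
The inner sum over $r\ge 1$ is $\stirling{k+1}{k+1}+\stirling{k+1}{k+3}+\cdots$, which collapses to $\stirling{k+1}{k+1}=1$ because $\stirling{k+1}{j}=0$ for $j>k+1$. This yields the claimed formula.

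In the $B$-case, substituting $m=k$ and $l=k-1$ in \eqref{eq:size_funct_B_unconditioned} gives
\begin{align*}
\E\Lambda_k(C_n^B)=\frac{2}{2^n n!}\biggl(\sum_{r=0}^\infty \stirlingb{k}{k+2r}\biggr)\biggl(\sum_{r=0}^\infty \stirlingb{n}{d-2r-1}\stirlingsecb{d-2r-1}{k}\biggr).
\end{align*}
The first sum collapses to $\stirlingb{k}{k}=1$ since $\stirlingb{k}{j}=0$ for $j>k$ (the polynomial in~\eqref{eq:def_stirling1b} is monic of degree~$k$). This produces the stated formula.

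There is no real obstacle here; the entire content of the corollary is the observation that the ``first factor'' in each of the two size-functional theorems degenerates to $1$ in the diagonal case $l=m-1$, because only the top Stirling coefficient survives. The only points that deserve a brief word are the identification $\alpha(F)=U_{k-1}(F)$ for a $k$-dimensional face (which holds whenever $F$ is not a linear subspace, an event of probability one by Remark~\ref{remark:pointed}) and verifying that the index range $1\le k\le d-1$ is compatible with $0\le l<m\le d-1$. Both are immediate.
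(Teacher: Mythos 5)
Your proposal is correct and follows exactly the paper's own argument: substitute $(m,l)=(k,k-1)$ into Theorems~\ref{theorem:exp_size_functionals_A} and~\ref{theorem:exp_size_functionals_B} and note that the first factor reduces to $\stirling{k+1}{k+1}=1$ (resp.\ $\stirlingb kk=1$). Your added remarks on the identification $\Lambda_k=Y_{k,k-1}$ and the admissibility of the index range are harmless elaborations of what the paper leaves implicit.
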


\begin{proof}
This follows directly from Theorems~\ref{theorem:exp_size_functionals_A} and~\ref{theorem:exp_size_functionals_B} by replacing $m$ by $k$ and $l$ by $k-1$, and observing that $\stirling kk=1= B[k,k]$.
\end{proof}




\subsubsection*{The dual cones}

Similar formulas can be derived for the dual cones $(C_n^A)^\circ$, $(\widetilde C_n^A)^\circ$ in the $A$-case and $(C_n^B)^\circ$, $(\widetilde C_n^B)^\circ$ in the $B$-case. 
For example, formulas for the expected number of $k$-faces and the expected conic intrinsic volumes of the dual cones $(C_n^A)^\circ$ and $(C_n^B)^\circ$ follow directly from Corollaries~\ref{cor:exp_faces_A}  and~\ref{cor:intr_vol_A} using the duality relations $f_k(C)=f_{d-k}(C^\circ)$ and $\upsilon_k(C)=\upsilon_{d-k}(C^\circ)$ which hold for every cone $C\subseteq\R^d$. The expectations of the more general functionals $Y_{m,l}$ and $Z_{j,k}$ can be derived by means of the following duality relation.

\begin{lemma}\label{lemma:duality_Y_Z}
For any cone $C\subseteq\R^d$ with non-empty interior, and all $0\le l<m\le d$, it holds that
\begin{align}\label{eq:relation_Y_Z}
Y_{m,l}(C^\circ)=\frac{1}{2}f_{d-m}(C) - Z_{d-m,d-l}(C).
\end{align}
\end{lemma}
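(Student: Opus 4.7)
The plan is to establish \eqref{eq:relation_Y_Z} by proving a pointwise identity per face and then summing. Since $C$ has nonempty interior, polar face duality gives a dimension-reversing bijection $F \mapsto N_F(C)$ between $\cF_{d-m}(C)$ and $\cF_m(C^\circ)$ (outside this range both sides of \eqref{eq:relation_Y_Z} vanish trivially), whence
\begin{align*}
Y_{m,l}(C^\circ) = \sum_{F \in \cF_{d-m}(C)} U_l\bigl(N_F(C)\bigr).
\end{align*}
Thus it suffices to prove the pointwise identity
\begin{align*}
U_l\bigl(N_F(C)\bigr) + U_{d-l}\bigl(T_F(C)\bigr) = \tfrac{1}{2} \qquad \text{for every } F \in \cF_{d-m}(C);
\end{align*}
summing over $F$ then immediately yields $Y_{m,l}(C^\circ) = \tfrac{1}{2} f_{d-m}(C) - Z_{d-m,d-l}(C)$.

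To prove the pointwise identity, I would exploit the direct-sum structure $T_F(C) = \lin F + K$, where $K := T_F(C)\cap (\lin F)^\perp$ is a pointed $m$-dimensional cone in $(\lin F)^\perp$ with nonempty relative interior, and $N_F(C) = K^\circ$, with polarity taken inside $(\lin F)^\perp \cong \mathbb{R}^m$. The product rule $\upsilon_k(C_1 + C_2) = \sum_{i+j=k}\upsilon_i(C_1)\upsilon_j(C_2)$ for orthogonal summands, applied to $C_1 = \lin F$ (a linear subspace, for which $\upsilon_i$ is a Kronecker delta) and $C_2 = K$, gives $\upsilon_k(T_F(C)) = \upsilon_{k-(d-m)}(K)$. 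Substituting this into the conic Crofton formula~\eqref{eq:crofton} and reindexing produces
\begin{align*}
U_{d-l}\bigl(T_F(C)\bigr)
= \sum_{\substack{j\text{ odd}\\ 1\le j\le l}} \upsilon_{d-l+j}\bigl(T_F(C)\bigr)
= \sum_{\substack{j\text{ odd}\\ 1\le j\le l}} \upsilon_{m-l+j}(K)
= U_{m-l}(K),
\end{align*}
so the pointwise identity reduces to the intrinsic claim $U_l(K^\circ) + U_{m-l}(K) = \tfrac{1}{2}$ inside $\mathbb{R}^m$.

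To establish this last identity I would expand both $U$-terms via Crofton in $\mathbb{R}^m$ and apply the polar duality $\upsilon_i(K^\circ) = \upsilon_{m-i}(K)$. After reindexing, the two index sets $\{0,\ldots,m-l-1\}$ and $\{m-l+1,\ldots,m\}$ appear with the common parity restriction $i \equiv m+l+1 \pmod 2$; as the missing value $i = m-l$ automatically fails this parity, the combined sum runs over all $i \in \{0,\ldots,m\}$ with $i \not\equiv m-l\pmod 2$. Since $K$ is a nonzero pointed cone and therefore not a linear subspace, the conic Gauss--Bonnet identity gives $\sum_{i\text{ even}}\upsilon_i(K) = \sum_{i\text{ odd}}\upsilon_i(K) = \tfrac{1}{2}$, so the parity-restricted sum equals $\tfrac{1}{2}$ and the identity follows. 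The main obstacle I expect is the Crofton bookkeeping identifying the ambient-$d$ functional $U_{d-l}(T_F(C))$ with the intrinsic functional $U_{m-l}(K)$ in $\mathbb{R}^m$; this works because $\upsilon_i$ vanishes above the dimension of a cone, so the Crofton sum always truncates at $\dim C$ irrespective of the ambient dimension.
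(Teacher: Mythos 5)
Your proof is correct and follows essentially the same route as the paper: both reduce \eqref{eq:relation_Y_Z}, via the bijection $G\mapsto N_G(C)$ between $\cF_{d-m}(C)$ and $\cF_m(C^\circ)$, to the pointwise identity $U_l(N_G(C))+U_{d-l}(T_G(C))=\tfrac12$ for each face and then sum. The only difference is that the paper quotes this identity from Hug and Schneider (after verifying that $T_G(C)$ is not a linear subspace), whereas you rederive it from the conic Crofton formula, the product rule for orthogonal sums, the polar duality $\upsilon_i(K^\circ)=\upsilon_{m-i}(K)$, and the conic Gauss--Bonnet relation; that derivation is sound.
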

\begin{proof}
Take a $(d-m)$-face $G\in\cF_{d-m}(C)$.
By definition, the normal cone of $G$ with respect to $C$ is given by $N_G(C)=(\lin G)^\perp\cap C^\circ$. Its dual cone, the tangent cone $T_G(C)$, takes the form
\begin{align}\label{eq:tangent_cone_decomposition}
T_G(C)=\big((\lin G)^\perp\cap C^\circ\big)^\circ=\lin G+C=\lin G + \big(C|(\lin G)^\perp\big),
\end{align}
where we used the relation $(C\cap D)^\circ=C^\circ+D^\circ$ which holds for any two polyhedral cones $C,D\subseteq\R^d$. Let us also argue that $T_G(C)$ is not a linear subspace. Indeed,  we can find a supporting hyperplane $H$ of $C$ such that $H\cap C=G$ and $C$ is contained in one of the closed half-spaces, let's say $H^-$, bounded by $H$. Thus, $\lin G\subseteq H$, which implies that $C|(\lin G)^\perp$ is contained in the half-space $H^-\cap (\lin G)^\perp$ inside $(\lin G)^\perp$. Together with the fact that $C$ has non-empty interior in $\R^d$ (and therefore also $C|(\lin G)^\perp$ has non-empty interior inside $(\lin G)^\perp$), this yields that $T_G(C)$, and equivalently $N_G(C)$, is not a linear subspace.
This allows us to apply the duality relation $U_l(N_G(C)) + U_{d-l}(T_G(C))=\frac 12$, see~\cite[Eq.~(5)]{HugSchneider2016}, which yields
\begin{align*}
Y_{m,l}\big(C^\circ\big)
&=\sum_{F\in\cF_{m}(C^\circ)}U_l(F)
	=\sum_{G\in\cF_{d-m}(C)}U_l\big(N_G(C)\big)\\
&	=\sum_{G\in\cF_{d-m}(C)}\Big(\frac 12-U_{d-l}\big(T_G(C))\Big)
		=\frac{1}{2} f_{d-m}(C) - Z_{d-m,d-l}(C).
\end{align*}
The  proof of~\eqref{eq:relation_Y_Z} is complete.
\end{proof}
Using Lemma~\ref{lemma:duality_Y_Z} together with our results on $Y_{m,l}$ and $Z_{j,k}$, it is possible to compute the expected values of these functionals for the dual cones of $C_n^A$ and $C_n^B$. We state only the result for $Y_{m,l}$.
\begin{corollary}\label{corollary:size_funct_dual_A}
Let $0\le l<m\le d$ be given. Then, it holds that
\begin{align*}
\E Y_{m,l}\big((C_n^A)^\circ\big)
&	=\frac{(d-m+1)!}{n!}\sum_{r=0}^\infty\stirling n{d-l-2r}\stirlingsec{d-l-2r}{d-m+1},\\
\E Y_{m,l}\big((C_n^B)^\circ\big)
&	=\frac{(d-m)!}{2^{n-d+m}n!}\sum_{r=0}^\infty\stirlingb n{d-l-2r-1}\stirlingsecb{d-l-2r-1}{d-m}.
\end{align*}
\end{corollary}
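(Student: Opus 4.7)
The plan is to apply the duality identity of Lemma~\ref{lemma:duality_Y_Z} to $C=C_n^A$ (respectively $C=C_n^B$), take expectations, and then substitute the already-established formulas for $\E f_{d-m}$ and $\E Z_{d-m,d-l}$ provided by Corollary~\ref{cor:exp_faces_A} and Theorem~\ref{theorem:gen_angle_sums_quermass_A} (resp.\ Theorem~\ref{theorem:gen_angle_sums_quermass_B}). The two terms produced by Corollary~\ref{cor:exp_faces_A} and by the first summand in Theorem~\ref{theorem:gen_angle_sums_quermass_A}/\ref{theorem:gen_angle_sums_quermass_B} will coincide up to the prefactor $1/2$ and thus cancel, leaving exactly the claimed expression.

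First I would verify that Lemma~\ref{lemma:duality_Y_Z} is indeed applicable to $C_n^A$ and $C_n^B$ almost surely. By the general position assumption (GP') together with Remark~\ref{remark:pointed}, the cone $C_n^A$ is almost surely either the full space $\R^d$ or a pointed, full-dimensional polyhedral cone; in either case its interior is non-empty, so the hypothesis of Lemma~\ref{lemma:duality_Y_Z} is satisfied. On the exceptional event $\{C_n^A=\R^d\}$ (which has positive probability) both sides of the duality identity vanish for $m\ge 1$, since then $(C_n^A)^\circ=\{0\}$ and neither $\R^d$ nor $\{0\}$ has any faces of dimension $d-m\in\{0,\ldots,d-1\}$; the assumption $0\le l<m$ forces $m\ge 1$, so this degenerate case is harmless. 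The same argument applies verbatim in the $B$-case under ($\pm$Ex) and (GP).

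Taking expectations in \eqref{eq:relation_Y_Z} gives
\begin{align*}
\E Y_{m,l}\bigl((C_n^A)^\circ\bigr)=\tfrac{1}{2}\,\E f_{d-m}(C_n^A)-\E Z_{d-m,d-l}(C_n^A),
\end{align*}
and substituting Corollary~\ref{cor:exp_faces_A} (with $k=d-m$) and Theorem~\ref{theorem:gen_angle_sums_quermass_A} (with $j=d-m$, $k=d-l$), the prefactors match to $(d-m+1)!/n!$, the first sum $\sum_{r\ge 0}\stirling{n}{d-2r}\stirlingsec{d-2r}{d-m+1}$ appears with equal and opposite signs, and only $\sum_{r\ge 0}\stirling{n}{d-l-2r}\stirlingsec{d-l-2r}{d-m+1}$ survives — this is exactly the claimed formula in the $A$-case. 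The $B$-case is identical in structure: substitute Corollary~\ref{cor:exp_faces_A} and Theorem~\ref{theorem:gen_angle_sums_quermass_B}, observe that the prefactor $(d-m)!/(2^{n-(d-m)}n!)$ factors out after the $\tfrac12$ cancels the $2$ in $\E f_{d-m}$, and the $\stirlingb{n}{d-2r-1}\stirlingsecb{d-2r-1}{d-m}$ terms cancel, leaving the stated sum.

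There is no substantive obstacle here; the whole argument is a bookkeeping exercise once the duality lemma is in hand. The only point requiring any care is the applicability check in the first paragraph, which hinges on Remark~\ref{remark:pointed} (general-position positive hulls are either the whole space or pointed) to guarantee the interior hypothesis of Lemma~\ref{lemma:duality_Y_Z}. Once that is settled, the result follows by substitution and simple arithmetic.
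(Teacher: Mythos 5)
Your proposal is correct and follows exactly the paper's own route: apply Lemma~\ref{lemma:duality_Y_Z} with $C=C_n^A$ (resp.\ $C=C_n^B$), take expectations, and substitute Corollary~\ref{cor:exp_faces_A} with $k=d-m$ and Theorem~\ref{theorem:gen_angle_sums_quermass_A} (resp.~\ref{theorem:gen_angle_sums_quermass_B}) with $j=d-m$, $k=d-l$, after which the $\stirling{n}{d-2r}$-type terms cancel. Your extra check that the duality lemma applies almost surely (and that the event $\{C_n^A=\R^d\}$ is harmless since $l<m$ forces $m\ge 1$) is a welcome bit of care that the paper leaves implicit.
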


\begin{proof}
The claims follow from Lemma~\ref{lemma:duality_Y_Z} (with $C=C_n^A$ and $C=C_n^B$) by inserting the formulas from Corollary~\ref{cor:exp_faces_A} (with $k$ replaced by $d-m\in\{0,\ldots,d-1\}$) and Theorems~\ref{theorem:gen_angle_sums_quermass_A} and~\ref{theorem:gen_angle_sums_quermass_B} (with $j$ replaced by $d-m$ and $k$ replaced by $d-l$).
\end{proof}

The formulas for $\E U_k((C_n^A)^\circ)$ and $\E U_k((C_n^B)^\circ)$ follow from Corollary~\ref{corollary:size_funct_dual_A} in the special case $m=d$ and $l=k$, while the formulas for $\E \Lambda_k((C_n^A)^\circ)$ and $\E \Lambda_k((C_n^B)^\circ)$ can be obtained by considering the special case $m=k$ and $l=k-1$.

\section{Comparison to other random cones}\label{sec:weyl_cones}

In this section, we explain the connection between the Weyl random cones of types $A+$ and $B+$  and the Weyl chambers of types $A_{n-1}$ and  $B_n$. Additionally, we recall another model of random cones from~\cite{GK2020_WeylCones}, which we call  Weyl random cones of types $A-$ and $B-$, and show how both models are related.
Additionally, we compare these constructions to the Cover-Efron cones~\cite{CoverEfron,HugSchneider2016} which are related to the Weyl chambers of product type $B_1\times \ldots\times B_1$.

\subsection{Weyl random cones generated by differences} \label{subsec:weyl_cones_first_kind}
We start by recalling the construction of random cones introduced in~\cite{GK2020_WeylCones}.
Let $X_1,\ldots,X_n$ be (possibly dependent) random vectors in $\R^d$.

\subsubsection*{Type $A-$}
In the $A$-case, we suppose that $X_1,\ldots,X_n$ satisfy condition $\textup{(Ex)}$, and additionally, the following general position assumption holds:
\begin{enumerate}[leftmargin=50pt, topsep=10pt]
\item[(GP*)] $n\ge d+1$ and for every $\sigma\in\text{Sym}(n)$  any $d$ vectors from the list
$
X_{\sigma(1)}-X_{\sigma(2)},X_{\sigma(2)}-X_{\sigma(3)},\ldots,X_{\sigma(n-1)}-X_{\sigma(n)}
$
are linearly independent in $\R^d$, with probability $1$.
\end{enumerate}
Here, $\text{Sym}(n)$ denotes the group of all permutations of the set $\{1,\ldots,n\}$. Consider the random conical tessellation $\mathcal{W}^A(X_1,\ldots,X_n)$ of $\R^d$ generated by the hyperplane arrangement
\begin{align*}
\big\{(X_i-X_j)^\perp:\:1\le i<j\le n\big\}.
\end{align*}
Here, $v^\bot$ is the orthogonal complement of a vector $v\in\R^d\backslash\{0\}$.
The above hyperplanes dissect the space $\R^d$ into open, connected components whose closures define the polyhedral cones of the conical tessellation. This tessellation a.s~consists of
$
2\sum_{r=0}^\infty\stirling n{n-d+2r+1}
$
$d$-dimensional cones; see~\cite[Theorem~1.1]{GK2020_WeylCones}.

The random cone $\widetilde D_n^A$ (introduced in~\cite{GK2020_WeylCones} and denoted there by $\mathcal D_n^A$) is defined as follows: Among the cones of the random tessellation $\mathcal{W}^A(X_1,\ldots,X_n)$ we pick one uniformly at random. Equivalently, $\widetilde D_n^A$  has the same distribution as  the random cone
\begin{align}\label{eq:weyl2_unconditioned}
D_n^A:=\{v\in\R^d:\langle v,X_1\rangle\le\ldots\le\langle v,X_n\rangle\}
\end{align}
conditioned on the event that $D_n^A\neq\{0\}$; see~\cite[Proposition~1.12]{GK2020_WeylCones}. The dual cone $(\widetilde D_n^A)^\circ$
has the same distribution as the random cone
\begin{align}\label{eq:weyl2dual_unconditioned}
(D_n^A)^\circ=\pos\{X_1-X_2,\ldots,X_{n-1}-X_n\}
\end{align}
conditioned on the event $(D_n^A)^\circ\neq\R^d$; see~\cite[Proposition~1.13]{GK2020_WeylCones}.
It is also possible to consider the random cones from~\eqref{eq:weyl2_unconditioned} and~\eqref{eq:weyl2dual_unconditioned} without the conditioning. This yields four random cones that differ only by conditioning on a certain event, or by taking the dual cone. These four random cones are referred to as the \textit{Weyl random cones of type $A-$}.

\subsubsection*{Type $B-$}
In the $B$-case, we suppose that $X_1,\ldots,X_n$ satisfy $\textup{($\pm$Ex)}$ and the following general position assumption holds:
\begin{enumerate}[leftmargin=50pt, topsep=10pt]
\item[(GP**)] $n\ge d$ and for every $\sigma\in\text{Sym}(n)$ and $\varepsilon=(\eps_1,\ldots,\eps_n)\in\{\pm 1\}^n$  any $d$ vectors of the list
$$
\varepsilon_1X_{\sigma(1)}-\varepsilon_2X_{\sigma(2)},\varepsilon_2X_{\sigma(2)}-\varepsilon_3X_{\sigma(3)},\ldots,\varepsilon_{n-1}X_{\sigma(n-1)}-\varepsilon_nX_{\sigma(n)}, \varepsilon_nX_{\sigma(n)}
$$
are linearly independent in $\R^d$, with probability $1$.
\end{enumerate}
Now, consider the random conical tessellation $\mathcal{W}^B(X_1,\ldots,X_n)$  of $\R^d$ generated by the hyperplane arrangement
\begin{align*}
\big\{(X_i-X_j)^\perp,(X_i+X_j)^\perp:\:1\le i<j\le n\big\}\cup
\big\{X_i^\perp:\:1\le i\le n\big\}.
\end{align*}
In~\cite[Theorem~1.3]{GK2020_WeylCones} it was shown that the tessellation $\mathcal{W}^B(X_1,\ldots,X_n)$ almost surely consists of
$
2\sum_{r=0}^\infty\stirlingb n{n-d+2r+1}
$
$d$-dimensional cones.
Then, the random cone $\widetilde D_n^B$ (denoted in~\cite{GK2020_WeylCones} by $\mathcal D_n^B$) is defined as follows: Among the cones of the random tessellation $\mathcal{W}^B(X_1,\ldots,X_n)$ pick one uniformly at random. Like in the $A$-case, it has the same distribution as the random cone
\begin{align}\label{eq:weyl1_unconditioned}
D_n^B:=\{v\in\R^d:\langle v,X_1\rangle\le\ldots\le\langle v,X_n\rangle\le 0\}
\end{align}
conditioned on the event that $D_n^B\neq\{0\}$. The dual cone $(\widetilde D_n^B)^\circ$ of $\widetilde D_n^B$ has the same distribution as the random cone
\begin{align}\label{eq:weyl1dual_unconditioned}
(D_n^B)^\circ=\pos\{X_1-X_2,\ldots,X_{n-1}-X_n,X_n\},
\end{align}
conditioned on the event this $(D_n^B)^\circ\neq\R^d$; see~\cite[Proposition~1.14]{GK2020_WeylCones}.
This yields four random cones $D_n^B, \widetilde D_n^B, (D_n^B)^\circ, (\widetilde D_n^B)^\circ$ that differ only by conditioning or duality. We refer to these four cones as the \textit{Weyl random cones of type $B-$}.

\subsection{Connections between the Weyl cones and the Weyl chambers}\label{subsec:conection_first_second_kind}
In order to see the similarity in the construction of the Weyl random cones generated by sums ($A+$ and $B+$) and those generated by differences ($A-$ and $B-$) we shortly introduce the Weyl chambers of type $A_{n-1}$ and  $B_n$. The polyhedral cone
$$
W_A:=\{x\in\R^n:x_1\ge x_2\ge \ldots\ge x_n\}
$$
is called the \textit{fundamental Weyl chamber} (or just Weyl chamber) of type $A_{n-1}$. Applying to $W_A$ arbitrary permutations of the coordinates yields the following polyhedral cones which form a conical tessellation of $\R^n$:
\begin{align*}
\{x\in\R^n:x_{\sigma(1)}\ge x_{\sigma(2)}\ge \ldots\ge x_{\sigma(n)}\},\quad\sigma\in\text{Sym}(n).
\end{align*}
These cones are also called Weyl chambers of type $A_{n-1}$.
Similarly, the cone
\begin{align*}
W_B:=\{x\in\R^n:x_1\ge x_2\ge \ldots\ge x_n\ge 0\}
\end{align*}
is called the \textit{fundamental Weyl chamber} (or just Weyl chamber) of type $B_n$. The cones obtained from $W_B$ by arbitrarily permuting the coordinates and multiplying any number of coordinates by $-1$ form a conical tessellation of $\R^n$. These cones are also called Weyl chambers of type $B_n$ and are explicitly given by
\begin{align*}
\{x\in\R^n:\eps_1x_{\sigma(1)}\ge \eps_2x_{\sigma(2)}\ge \ldots\ge \eps_nx_{\sigma(n)}\ge 0\},\quad\sigma\in\text{Sym}(n), \,\eps=(\eps_1,\ldots,\eps_n)\in\{\pm 1\}^n.
\end{align*}
%

We now claim that the random Weyl cones can be obtained from the Weyl chambers and their duals by applying linear transformations.
Let the random $(n\times d)$-matrix $X=(X_1,\ldots,X_n)$ consist of the random column vectors $X_1,\ldots,X_n$ introduced above. Let $e_1,\dots,e_n$ denote the standard Euclidean orthonormal basis vectors in $\R^n$.
In the $A$-case, the fundamental Weyl chamber and its negative dual cone can equivalently be written in terms of positive hulls as follows:
$$
W_A=\pos\{e_1,e_1+e_2,\ldots,e_1+\ldots+e_n,-(e_1+\ldots+e_n)\}
$$
and
$$
(-W_A)^\circ
=\pos\{e_1-e_2,\ldots,e_{n-1}-e_n\}.
$$
Then, the cone $C_n^A$ can be represented as
\begin{align*}
C_n^A=\pos\{S_1,S_2,\ldots,S_{n-1}\}=X\cdot W_A,
\end{align*}
which relies on the bridge property (Br), whereas for $(D_n^A)^\circ$ we have
\begin{align*}
(D_n^A)^\circ= \pos\{X_1-X_2,\ldots,X_{n-1}-X_n\}=X\cdot (-W_A)^\circ.
\end{align*}

Similarly, the Weyl chamber of type $B_n$ and its negative dual can be represented in terms of positive hulls as follows:
$$
W_B=\pos\{e_1,e_1+e_2,\ldots,e_1+\ldots+e_n\}
$$
and
$$
(-W_B)^\circ
=\pos\{e_1-e_2,\ldots,e_{n-1}-e_n,e_n\}.
$$
Then, the cone $C_n^B$ can be represented as
\begin{align*}
C_n^B=\pos\{S_1,S_2,\ldots,S_{n}\}=X\cdot W_B,
\end{align*}
while for $(D_n^B)^\circ$  we have
\begin{align*}
(D_n^B)^\circ=\pos\{X_1-X_2,\ldots,X_{n-1}-X_n,X_n\}=X\cdot (-W_B)^\circ.
\end{align*}

\begin{remark}
Comparing these constructions to the construction of the Cover-Efron random cone, see~\cite{CoverEfron} and~\cite{HugSchneider2016}, defined as $\pos\{X_1,\ldots,X_n\}$ conditioned on the event that this positive hull is not equal to $\R^d$, we observe that there are again four cones belonging to the Cover-Efron family that differ only by duality or conditioning. Namely, the so-called Schl\"afli random cone, see~\cite{HugSchneider2016}, is defined as the cone drawn uniformly at random from the conical tessellation generated by $X_1^\perp,\ldots,X_n^\perp$.
The Schl\"afli random cone can equivalently be described as the cone $\{v\in\R^d:\langle v,X_i\rangle\le 0\;\forall i=1,\ldots,n\}$ conditioned on the event that it is not $\{0\}$. Removing the conditioning operation  yields  the cone
$\pos\{X_1,\ldots,X_n\}$, which was among others studied by Donoho and Tanner~\cite{Donoho2009}, and its dual cone $\{v\in\R^d:\langle v,X_i\rangle\le 0\;\forall i=1,\ldots,n\}$.
Similarly to Weyl cones, we can observe that
\begin{align*}
\pos\{X_1,\ldots,X_n\}=X\cdot V = X\cdot (-V)^\circ,
\end{align*}
where the matrix $X$ is defined as above and $V:=\{x\in\R^n:x_i\geq 0\;\forall i=1,\ldots,n\}$ is the non-negative orthant. Unlike in the Weyl case, we have that $V=(-V)^\circ$ and therefore there is no difference between the plus and minus types in the Cover-Efron case.
\end{remark}

\begin{remark}
Contrary to what one may assume at first glance, the conditions (GP') and (GP*) (respectively (GP) and (GP**)) are not equivalent. We explain this in the case of (GP') and (GP*). By~\cite[Theorem~2.16]{GK2020_WeylCones}, (GP*) holds if and only if the kernel $\ker X$ has dimension $n-d$ and, additionally, $(\ker X)^\perp$ is in general position with respect to the reflection arrangement $\cA(A_{n-1}):=\{(e_i-e_j)^\perp:1\le i<j\le n\}$.
By~\cite[Lemma~6.2]{KVZ15} (which states just one direction, the other one being analogous), condition (GP') holds if and only if $\ker X$ has dimension $n-d$ and, additionally, $\ker X$ (not $(\ker X)^\perp$!) is in general position with respect to $\cA(A_{n-1})$.

We now convince ourselves that these conditions are not equivalent by looking at  the case $d=1$.
Indeed,  a linear hull $\lin v$ of a vector $v\in\R^n\backslash\{0\}$ is in general position with respect to $\cA(A_{n-1})$ if and only if the coordinates of $v$ are pairwise distinct.
On the other hand, the linear hyperplane $H:= v^{\bot}$ is in general position with respect to $\cA(A_{n-1})$ if and only if there is no partition of the coordinates $(v_1,\dots,v_n)$ of $v$ such that the sum of coordinates inside each subset of the partition vanishes.
Clearly, these conditions are not equivalent.
\end{remark}

\section{Proofs}\label{sec:proofs}

This section contains the proofs of our main results stated in Section~\ref{sec:expectations_main_results}. In Section~\ref{sec:prob_positive_hulls} we recall a formula for the absorption probability of a joint convex hull of random walks and random bridges, and, additionally, prove some consequences of this result that are needed for the subsequent proofs. Section~\ref{sec:proof_size_functionals} contains the proofs of Theorems~\ref{theorem:exp_size_functionals_A} and~\ref{theorem:exp_size_functionals_B} on the expected size functionals $Y_{m,l}$ of $C_n^A$ and $C_n^B$, while in Section~\ref{sec:proof_gen_angle_sums} we prove the formulas for expected values of $Z_{j,k}$ of $C_n^A$ and $C_n^B$, that is, Theorems~\ref{theorem:gen_angle_sums_quermass_A} and~\ref{theorem:gen_angle_sums_quermass_B}. Corollaries~\ref{corollary:gen_angle_sums_int_vol_A} and~\ref{corollary:gen_angle_sums_int_vol_B} will be derived in Section~\ref{sec:conic_intrinsic_vol_sums_proofs}.
Finally, Section~\ref{sec:proof_sufficient_condition} contains Lemma~\ref{lemma:sufficent_condition} which states a sufficient condition for the general position assumptions (GP) and (GP'). Whenever possible, we treat the $B$-case of each result more extensively, and only briefly sketch the $A$-case, since it is proven in a similar and somewhat simpler way.

\subsection{Probabilities involving joint positive hulls of random walks and random bridges}\label{sec:prob_positive_hulls}

Before we will be able to prove the results from Section~\ref{sec:expectations_main_results} we need to recall a formula for the absorption probability of the joint convex hull of a finite number of random walks and random bridges that was proven in~\cite[Theorem~2.1]{KVZ17} and generalizes  Equations~\eqref{eq:abs_prob_bridge} and~\eqref{eq:abs_prob_walk}.   We will state this result, which is the basis of the subsequent proofs, in Theorem~\ref{theorem:absorpt_prob_joint_convex hull}. But first, we need to introduce some necessary notation.

Fix numbers $s,r\in\N_0$ that do not vanish simultaneously, and take $n_1,\ldots,n_s\in\N$, $m_1,\ldots,m_r\in\N\backslash\{1\}$. Then, consider some random vectors
\begin{align*}
X_1^{(1)},\ldots,X_{n_1}^{(1)},\;\ldots,\;X_1^{(s)},\ldots,X_{n_s}^{(s)},
\qquad
\;Y_1^{(1)},\ldots,Y_{m_1}^{(1)},\;\ldots,\;Y_1^{(r)},\ldots,Y_{m_r}^{(r)}
\end{align*}
in $\R^d$, such that $Y_1^{(j)}+\ldots+Y_{m_j}^{(j)}=0$ a.s.~for every $1\le j\le r$.  Now, consider the collection of $s$ random walks $(S_1^{(i)},\ldots,S_{n_i}^{(i)})$, $1\le i\le s$, and $r$ random bridges $(R_1^{(j)},\ldots,R_{m_j}^{(j)})$, $1\le j\le r$, defined by
$$
S_k^{(i)} := X_{1}^{(i)} + \ldots + X_{k}^{(i)},
\qquad
R_\ell^{(j)} := Y_{1}^{(j)} + \ldots + Y_{\ell}^{(j)}.
$$
Let their joint convex hull be denoted by
\begin{align}\label{eq:joint_convex_hull_walks_bridges}
H:=\conv\Big\{S_1^{(1)},\ldots,S_{n_1}^{(1)},\;\ldots,\;S_1^{(s)},\ldots,S_{n_s}^{(s)},R_1^{(1)},\ldots,R_{m_1-1}^{(1)},\;\ldots,\;R_1^{(r)},\ldots,R_{m_r-1}^{(r)}\Big\}
\end{align}
and their joint positive hull by
\begin{align}\label{eq:joint_pos_hull_walks_bridges}
P:=\pos\Big\{S_1^{(1)},\ldots,S_{n_1}^{(1)},\;\ldots,\;S_1^{(s)},\ldots,S_{n_s}^{(s)},R_1^{(1)},\ldots,R_{m_1-1}^{(1)},\;\ldots,\;R_1^{(r)},\ldots,R_{m_r-1}^{(r)}\Big\}.
\end{align}

We are interested in the probability that $H$ contains the origin. It is known explicitly under the following distributional invariance assumption:
\begin{multline}\label{eq:exchangeability_joint_walks_bridges}
\Big(X_1^{(1)},\ldots,X_{n_1}^{(1)},\;\ldots,\;X_1^{(s)},\ldots,X_{n_s}^{(s)},\;Y_1^{(1)},\ldots,Y_{m_1}^{(1)},\;\ldots,\;Y_1^{(r)},\ldots,Y_{m_r}^{(r)}\Big)	\\
	\eqdistr \Big(\eps_1^{(1)}X_{\sigma_1(1)}^{(1)},\ldots,\eps_{n_1}^{(1)}X_{\sigma_1(n_1)}^{(1)},\;\ldots,\;\eps_1^{(s)}X_{\sigma_s(1)}^{(s)},\ldots,\eps_{n_s}^{(s)}X_{{\sigma_s(n_s)}}^{(s)},\\
	Y_ {\pi_1(1)}^{(1)},\ldots,Y_{{\pi_1(m_1)}}^{(1)},\;\ldots,\;Y_{\pi_r(1)}^{(r)},\ldots,Y_{{\pi_r(m_r)}}^{(r)}\Big)	,
\end{multline}
for all permutations $\sigma^{(1)}\in\text{Sym}(n_1),\ldots,\sigma^{(s)}\in\text{Sym}(n_s),\pi^{(1)}\in\text{Sym}(m_1),\ldots,\pi^{(r)}\in\text{Sym}(m_r)$ and all signs $\eps_1^{(1)},\ldots,\eps_{n_1}^{(1)},\ldots,\eps_1^{(s)},\ldots,\eps_{n_s}^{(s)}\in \{\pm 1\}$. Essentially, \eqref{eq:exchangeability_joint_walks_bridges} states that the joint distribution of the increments of the walks/bridges is invariant with respect to the product action of reflection groups of type $B$ (for walks) and $A$ (for bridges).
\begin{theorem}[Theorem~2.1 in~\cite{KVZ17}]\label{theorem:absorpt_prob_joint_convex hull}
Suppose that~\eqref{eq:exchangeability_joint_walks_bridges} holds and that any $d$ random vectors from the list on the right-hand side of~\eqref{eq:joint_convex_hull_walks_bridges} are linearly independent with probability $1$. Then, it holds that
\begin{align*}
\P[0\in H]=\frac{2(P(d+1)+P(d+3)+\ldots)}{2^{n_1}n_1!\cdot\ldots\cdot 2^{n_s}n_s!m_1!\cdot\ldots\cdot m_r!},\quad \P[0\notin H]=\frac{2(P(d-1)+P(d-3)+\ldots)}{2^{n_1}n_1!\cdot\ldots\cdot 2^{n_s}n_s!m_1!\cdot\ldots\cdot m_r!},
\end{align*}
where the $P(k)$'s are the coefficients of the polynomial
\begin{align*}
\left(\prod_{i=1}^s(t+1)(t+3)\ldots(t+2n_i-1)\right)\left(\prod_{j=1}^r(t+1)(t+2)\ldots(t+m_j-1)\right)=\sum_{k=0}^\infty P(k)t^k.
\end{align*}
Note that the $P(k)$'s also depend on $s,r,n_1,\ldots,n_s,m_1,\ldots,m_r$.
\end{theorem}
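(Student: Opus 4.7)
The plan is to adapt the Wendel-type counting argument to the joint setting. First, by the general position hypothesis combined with the separating hyperplane theorem, the event $\{0\notin H\}$ coincides (up to a null set) with the existence of a vector $v\in\R^d\setminus\{0\}$ such that $\langle v,Z\rangle>0$ for every vector $Z$ appearing on the right-hand side of~\eqref{eq:joint_convex_hull_walks_bridges}. Equivalently, the central hyperplane arrangement $\{Z^\perp\}$ in $\R^d$ admits a chamber in which every $Z$ evaluates positively, so $\P[0\notin H]$ is the probability that such a chamber exists.

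Next I would exploit the invariance \eqref{eq:exchangeability_joint_walks_bridges} under the product group
\[
G=\prod_{i=1}^{s}B_{n_i}\times\prod_{j=1}^{r}\mathrm{Sym}(m_j),
\]
of order exactly $|G|=2^{n_1}n_1!\cdots 2^{n_s}n_s!\,m_1!\cdots m_r!$, which is precisely the normalizing constant in the theorem statement. Averaging the indicator of $\{0\notin H\}$ over $G$-orbits and using the distributional invariance yields
\[
\P[0\notin H]=\frac{1}{|G|}\,\E\!\left[\,\#\{g\in G:g\cdot Z\text{ lies in an open half-space through }0\}\right].
\]
On a generic realization, the cardinality inside the expectation admits a transparent combinatorial description. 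Fix a generic test direction $v$ and set $a_k^{(i)}:=\langle v,X_k^{(i)}\rangle$ and $b_\ell^{(j)}:=\langle v,Y_\ell^{(j)}\rangle$. For the $i$-th walk, a signed permutation $g\in B_{n_i}$ contributes iff it sends $(a_1^{(i)},\ldots,a_{n_i}^{(i)})$ to a sequence whose partial sums are all positive; for the $j$-th bridge, a permutation $g\in\mathrm{Sym}(m_j)$ contributes iff it permutes $(b_1^{(j)},\ldots,b_{m_j}^{(j)})$ so that all proper partial sums are positive. These are precisely the statistics appearing in the classical Sparre Andersen theorem.

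The second main step is to identify the generating function of these counts. For a single signed permutation the number with a prescribed cycle statistic is encoded by the coefficients of $(t+1)(t+3)\cdots(t+2n_i-1)$ (the type-$B$ characteristic polynomial of~\eqref{eq:def_stirling1b}), while for a single permutation the coefficients of $(t+1)(t+2)\cdots(t+m_j-1)$ give the analogous type-$A$ count~\eqref{eq:def_stirling1_polynomial}. Since the symmetry group factors as a direct product, the joint count is encoded coefficient-wise by the product polynomial $P(t)$ of the statement. The final step is to translate this count, valid for a single generic test direction $v$, into the $d$-dimensional absorption probability: integrating over the Grassmannian of directions (or, equivalently, summing over all chambers of the full reflection arrangement in $\R^{n_1+\cdots+n_s+m_1+\cdots+m_r}$ that meet the random $d$-dimensional image of the block matrix of increments) selects the coefficients $P(d-1), P(d-3),\ldots$ and supplies the factor $2$ from the $v\leftrightarrow -v$ symmetry.

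I expect the main obstacle to be this final parity-selection step: one must rigorously show that cutting the $n$-dimensional product reflection arrangement by a generic $d$-dimensional subspace (the column span of the random matrix) picks out precisely the coefficients indexed by $d-1,d-3,\ldots$ rather than all coefficients of $P(t)$. In the framework of~\cite{KVZ17}, this is handled either by a Zaslavsky/Crofton-type identity on the Grassmannian, or by an inductive reduction on $s+r$ that reduces the general statement to the single-walk and single-bridge cases (which are themselves classical). Keeping track of parities through the induction, and ensuring that general position survives the reduction, are the subtle points that the argument must address carefully.
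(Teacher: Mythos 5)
First, a point of reference: the paper does not prove this statement at all --- it is imported verbatim as Theorem~2.1 of~\cite{KVZ17}, so the comparison here is against the known proof in that reference rather than against anything in the present text. Your overall strategy does match that proof in outline: characterize $\{0\notin H\}$ by the existence of a strictly separating direction, average over the product group $G$ of signed permutations and permutations under which the law is invariant by~\eqref{eq:exchangeability_joint_walks_bridges}, observe that $\#\{g\in G: 0\notin H_g\}$ is almost surely a deterministic constant equal to a number of Weyl chambers of the product reflection arrangement met by a generic $d$-dimensional subspace, and identify that constant via the characteristic polynomial $P(t)$.

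There are, however, two genuine problems. The first is a quantifier error in your middle step: after fixing a single generic test direction $v$ and setting $a_k^{(i)}=\langle v,X_k^{(i)}\rangle$, the count of group elements making all partial sums positive \emph{in that fixed direction} is the one-dimensional Sparre Andersen statistic, which is the $d=1$ answer, not the quantity you need. What must be counted is the number of $g$ for which \emph{there exists some} $v$ (depending on $g$) separating the $g$-transformed partial sums from the origin; equivalently, the number of chambers of the product arrangement of type $B_{n_1}\times\cdots\times B_{n_s}\times A_{m_1-1}\times\cdots\times A_{m_r-1}$ whose interior meets the $d$-dimensional subspace $L=\{(\langle v,X_k^{(i)}\rangle,\langle v,Y_\ell^{(j)}\rangle): v\in\R^d\}$. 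Your closing paragraph names this correct object, but the proposed repair (``integrating over the Grassmannian of directions'') does not recover the existential quantifier and is not how the argument closes. The second problem is that the step you yourself flag as ``the main obstacle'' --- that the number of chambers meeting a generic $d$-dimensional section equals $2\bigl(P(d-1)+P(d-3)+\cdots\bigr)$, where $P$ is the product of the (unsigned) characteristic polynomials of the factors --- is precisely the mathematical content of the theorem, and it is asserted rather than proved. The known proof supplies it via a Klivans--Swartz-type formula for the characteristic polynomial of a generic section together with Zaslavsky's theorem (and the multiplicativity of characteristic polynomials over products of arrangements); without that ingredient, and without verifying that the bridges must be treated in the essentialized hyperplane $\{y_1+\cdots+y_{m_j}=0\}$ so that the $A_{m_j-1}$ factor contributes $(t+1)(t+2)\cdots(t+m_j-1)$ of degree $m_j-1$, the argument is a plan rather than a proof.
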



We will now state and prove several consequences of Theorem~\ref{theorem:absorpt_prob_joint_convex hull}. We say that two linear subspaces $L,L'\subseteq \R^d$ are in \textit{general position} if $\dim (L\cap L')=\max\{0,\dim L+\dim L'-d\}$. The next corollary gives a formula for the probability that a linear subspace (which satisfies certain general position assumptions) intersects the joint positive hull $P$ from~\eqref{eq:joint_pos_hull_walks_bridges} non-trivially.

\begin{corollary}\label{cor:prob_walkbridge_intersects_V}
Suppose that~\eqref{eq:exchangeability_joint_walks_bridges} holds and that any $d$ random vectors from the list on the right-hand side of~\eqref{eq:joint_pos_hull_walks_bridges} are linearly independent with probability $1$.
Let $k\in\{0,\ldots,d-1\}$ and $V\in G(d,d-k)$ be a deterministic $(d-k)$-dimensional linear subspace of $\R^d$ which is a.s.\ in general position to each linear hull $\lin\{Z_{1},\ldots,Z_{k}\}$, where $Z_{1},\ldots,Z_{k}$ are any $k$ vectors from the list on the right hand-side of~\eqref{eq:joint_pos_hull_walks_bridges}. Then, it holds that
\begin{align*}
\bP[P\cap V\neq\{0\}]=\frac{2(P(k+1)+P(k+3)+\ldots)}{2^{n_1}n_1!\cdot\ldots\cdot 2^{n_s}n_s!m_1!\cdot\ldots\cdot m_r!},
\end{align*}
where the $P(k)$'s are the coefficients defined in Theorem~\ref{theorem:absorpt_prob_joint_convex hull}.
\end{corollary}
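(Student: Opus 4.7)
The plan is to reduce the statement to Theorem~\ref{theorem:absorpt_prob_joint_convex hull} by projecting every random vector onto the $k$-dimensional subspace $V^\perp$. Let $\pi\colon \R^d \to V^\perp$ denote the orthogonal projection, and, for brevity, let $Z_1,\ldots,Z_N$ denote all the random vectors appearing on the right-hand side of~\eqref{eq:joint_pos_hull_walks_bridges}, so that $P=\pos\{Z_1,\ldots,Z_N\}$.

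The first step is to establish the almost sure equivalence
\[
\{P \cap V \neq \{0\}\} \;=\; \{0 \in \conv\{\pi(Z_1),\ldots,\pi(Z_N)\}\}.
\]
The implication $(\Rightarrow)$ is immediate: any nonzero $v=\sum_i \lambda_i Z_i \in V$ with $\lambda_i \ge 0$ (not all zero) satisfies $\sum_i \lambda_i \pi(Z_i) = \pi(v) = 0$, which after normalizing $\sum_i \lambda_i = 1$ places $0$ in the convex hull of the $\pi(Z_i)$. Conversely, given $\mu_i\ge0$ with $\sum_i \mu_i = 1$ and $\sum_i \mu_i \pi(Z_i) = 0$, the vector $v := \sum_i \mu_i Z_i$ belongs to $V$; if $v \neq 0$ we are done, and otherwise $0 \in \conv\{Z_1,\ldots,Z_N\}$, in which case the general position hypothesis on the $Z_i$'s together with the classical fact~\cite[p.~167]{Zeigler_LecturesOnPolytopes} forces $P = \R^d$, so $P \cap V = V \neq \{0\}$ since $k < d$.

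The second step is to verify that the projected increments $\pi(X_\cdot^{(\cdot)})$ and $\pi(Y_\cdot^{(\cdot)})$, regarded as driving a collection of $s$ random walks and $r$ random bridges inside $V^\perp \cong \R^k$, satisfy the hypotheses of Theorem~\ref{theorem:absorpt_prob_joint_convex hull} in ambient dimension $k$. The joint distributional invariance~\eqref{eq:exchangeability_joint_walks_bridges} is preserved under any deterministic linear map, hence under $\pi$; the bridge property in the projected setting follows by linearity, since $\pi\big(Y_1^{(j)}+\ldots+Y_{m_j}^{(j)}\big)=\pi(0)=0$; finally, any $k$ of the vectors $\pi(Z_{i_1}),\ldots,\pi(Z_{i_k})$ are linearly independent in $V^\perp$, because the assumption of the corollary forces $V \cap \lin\{Z_{i_1},\ldots,Z_{i_k}\}=\{0\}$ a.s., which is exactly the statement that $\pi$ restricted to $\lin\{Z_{i_1},\ldots,Z_{i_k}\}$ is injective.

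With the hypotheses in place, the third step is a direct application of Theorem~\ref{theorem:absorpt_prob_joint_convex hull} to the projected system in dimension $k$: since the polynomial whose coefficients are the $P(j)$'s depends only on $n_1,\ldots,n_s,m_1,\ldots,m_r$ (and not on the ambient dimension), one obtains
\[
\bP\big[0 \in \conv\{\pi(Z_1),\ldots,\pi(Z_N)\}\big] \;=\; \frac{2\big(P(k+1)+P(k+3)+\ldots\big)}{2^{n_1}n_1!\cdot\ldots\cdot 2^{n_s}n_s! \, m_1!\cdot\ldots\cdot m_r!},
\]
which combined with the equivalence from Step~1 yields the claim. The only subtle point in the argument is the degenerate case $v=0$ in Step~1; this is the step where the general position assumption on the $Z_i$'s (inherited from the hypothesis of the corollary) does genuine work, via the Ziegler-type equivalence between $P=\R^d$ and $0 \in \conv\{Z_1,\ldots,Z_N\}$.
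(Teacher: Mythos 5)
Your proposal is correct and follows essentially the same route as the paper's proof: the same a.s.\ equivalence between $P\cap V\neq\{0\}$ and $0\in\conv$ of the projections onto $V^\perp$ (including the same treatment of the degenerate case via the Ziegler-type fact that general position plus $0\in\conv$ forces $P=\R^d$), the same verification that the projected increments inherit the invariance and general position hypotheses, and the same application of Theorem~\ref{theorem:absorpt_prob_joint_convex hull} in the ambient space $V^\perp$. No gaps.
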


\begin{remark}\label{remark:prob_walkbridge_intersects_V}
Note that in the case $s=1$, $r=0$, (i.e.\ for one random walk of length $n=n_1$) we obtain
\begin{align*}
\bP[C_n^B\cap V\neq\{0\}]=\P[\pos\{S_1,\ldots,S_n\}\cap V\neq \{0\}]=\frac{2}{2^nn!}\sum_{r=0}^\infty \stirlingb{n}{k+2r+1}.
\end{align*}
Similarly, for $s=0$ and $r=1$ (i.e.\ for one random bridge of length $n=m_1$) we get
\begin{align*}
\bP[C_n^A\cap V\neq\{0\}]=\P[\pos\{S_1,\ldots,S_{n-1}\}\cap V\neq \{0\}]=\frac{2}{n!}\sum_{r=1}^\infty \stirling{n}{k+2r}.
\end{align*}
\end{remark}

\begin{proof}[Proof of Corollary~\ref{cor:prob_walkbridge_intersects_V}]
To keep the notation simple, we only prove the case when  $P$ is the joint positive hull of $s=1$ random walk, denoted by $S_1,\ldots,S_n$, and $r=1$ random bridge, denoted by $R_1,\ldots,R_m$, from which the general case is evident.
Fix $k\in\{0,\ldots,d-1\}$. Our aim is to show that
\begin{equation}\label{eq:pos_huelle_wird_geschnitten}
\pos\{S_1,\ldots,S_n,R_1,\ldots,R_{m-1}\}\cap V\neq \{0\}
\end{equation}
if and only if
\begin{equation}\label{eq:projektion_enthaelt_null}
0\in\conv\{S_1|V^\perp,\ldots,S_n|V^\perp,R_1|V^\perp,\ldots,R_{m-1}|V^\perp\}.
\end{equation}
After that, we can apply the formula for the absorption probability from Theorem~\ref{theorem:absorpt_prob_joint_convex hull} in the ambient linear subspace $V^\perp$ to the projected random walk $S_1|V^\perp,\ldots,S_n|V^\perp$ and the projected random bridge $R_1|V^\perp,\ldots,R_{m}|V^\perp$. Here, $x|V^\bot$ denotes the orthogonal projection of a vector $x\in\R^d$ to $V^\bot$.

Suppose that $\pos\{S_1,\ldots,S_n,R_1,\ldots,R_{m-1}\}\cap V\neq \{0\}$, or equivalently, there is a point $x\in V\bsl \{0\}$ and numbers $\lambda_1,\ldots,\lambda_{n+m-1}\ge 0$ that do not vanish simultaneously such that
\begin{align*}
\lambda_1S_1+\ldots+\lambda_nS_n+\lambda_{n+1}R_1+\ldots+\lambda_{n+m-1}R_{m-1}=x.
\end{align*}
By projecting on $V^\perp$, this implies that there are numbers $\lambda_1,\ldots,\lambda_{n+m-1}\ge 0$ that do not vanish simultaneously such that
\begin{align}\label{eq:sum_0}
\lambda_1(S_1|V^\perp)+\ldots+\lambda_n(S_n|V^\perp)+\lambda_{n+1}(R_1|V^\perp)+\ldots+\lambda_{n+m-1}(R_{m-1}|V^\perp)=0.
\end{align}
Dividing by the sum of the $\lambda_i$'s proves that~\eqref{eq:projektion_enthaelt_null} holds.
To prove the inverse implication, suppose that there are $(\lambda_1,\ldots,\lambda_{n+m-1})\in\R^{n+m-1}_{\ge 0}\backslash \{0\}$ such that $\lambda_1S_1|V^\perp+\ldots+\lambda_{n+m-1}R_{m-1}|V^\perp=0$. Using the linearity of the projection map  $x\mapsto x|V^\perp$, this implies that $\lambda_1S_1+\ldots+\lambda_nS_n+\lambda_{n+1}R_1+\ldots+\lambda_{n+m-1}R_{m-1}\in V$. If this vector is not $0$, then~\eqref{eq:pos_huelle_wird_geschnitten} holds.  If it is $0$, the linear independence assumption of Corollary~\ref{cor:prob_walkbridge_intersects_V} implies that
\begin{align*}
\pos\{S_1,\ldots,S_n,R_1,\ldots,R_{m-1}\}=\R^d,
\end{align*}
see, e.g., Lemma~2.13 from~\cite{GK2020_WeylCones}, and, again, ~\eqref{eq:pos_huelle_wird_geschnitten} holds.  This proves that~\eqref{eq:pos_huelle_wird_geschnitten} and~\eqref{eq:projektion_enthaelt_null} are equivalent.

Note that the invariance assumption~\eqref{eq:exchangeability_joint_walks_bridges}  easily translates to the increments of the projected random walks and bridges
\begin{equation}\label{eq:proj_walk_bridges}
S_1|V^\perp,\ldots,S_n|V^\perp,R_1|V^\perp,\ldots,R_{m-1}|V^\perp.
\end{equation}
To be able to apply Theorem~\ref{theorem:absorpt_prob_joint_convex hull}
in the ambient $k$-dimensional linear subspace $V^\perp$, we need to prove that each $k$ of the projected vectors~\eqref{eq:proj_walk_bridges}
are linearly independent with probability $1$.  
To this end, we take any $k$ vectors, denoted by $Z_1,\ldots,Z_k$, from the list $S_1,\ldots,S_n,R_1,\ldots,R_{m-1}$, assume that their projections $Z_1|V^\perp,\ldots,Z_k|V^\perp$ are linearly dependent and show that this event has probability $0$.
Without loss of generality,  let
\begin{align*}
Z_1|V^\perp=\lambda_2(Z_2|V^\perp)+\ldots+\lambda_k(Z_k|V^\perp)=(\lambda_2Z_2+\ldots+\lambda_kZ_k)|V^\perp
\end{align*}
for some numbers $\lambda_2,\ldots,\lambda_k\in\R$. This already implies that $Z_1-\lambda_2Z_2-\ldots-\lambda_kZ_k$ belongs to the linear subspace $V$. Since $Z_1-\lambda_2Z_2-\ldots-\lambda_kZ_k\neq 0$ (due to their linear independence), this means that
$$
\lin\{Z_1,\ldots,Z_k\}\cap V\neq\{0\}.
$$
However, this event has probability $0$ since $\dim\lin\{Z_1,\ldots,Z_k\}=k$  and $V$ is assumed to be in general position with respect to $\lin\{Z_1,\ldots,Z_k\}$, with probability $1$. This completes the proof.
\end{proof}

Now, we want to evaluate the so-called face probability, i.e.\ the probability that, for given indices $1\le i_1<\ldots<i_k\le n$, the cone  $\pos\{S_{i_1},\ldots,S_{i_k}\}$ is a $k$-face of the positive hull $C_n^B$ of a random walk $S_1,\ldots,S_n$ (and, similarly, for bridges).   The basic idea is that the projection of $C_n^B$ onto the linear subspace $M:=\lin^\perp\{S_{i_1},\ldots,S_{i_k}\}$ yields a number of random walks and bridges inside $M$. Furthermore, the face probability coincides with the non-absorption probability of the joint convex hull of these random walks and bridges in $M$ which can be evaluated by means of  Theorem~\ref{theorem:absorpt_prob_joint_convex hull}. For convex hulls of random walks instead of positive hulls, the face probability has already been evaluated in~\cite[Theorems~1.6, 1.11]{KVZ17}. 

\begin{remark}\label{remark:faces_simplicial}
Let $k\in\{1,\ldots,d-1\}$. We claim that under the assumption (GP) on $X_1,\ldots,X_n$, each $k$-face $F$ of $C_n^B$ is of the form
$$
F=\pos\{S_{i_1},\ldots,S_{i_k}\}
$$
for some indices $1\le i_1<\ldots<i_k\le n$. Indeed,  it is known that $F$ is of the form $F=\pos\{S_{i_1},\ldots,S_{i_m}\}$ for some $m\in\{k,\ldots,n\}$ and $1\le i_1<\ldots<i_m\le n$. Assumption (GP) then implies that $\dim F=\min\{m,d\}$. But since $\dim F=k$ this already implies that $k=m$. 

Similarly, under the assumptions (Br) and (GP') on $X_1,\ldots,X_n$, each $k$-face $G$ of $C_n^A$ is of the form
$$
G=\pos\{S_{i_1},\ldots,S_{i_k}\}
$$
for some indices $1\le i_1<\ldots<i_k\le n-1$.
\end{remark}

\begin{corollary}\label{cor:face_probabilities_pos_walk}
Let $S_1,\ldots,S_n$ be a random walk in $\R^d$ whose increments $X_1,\ldots,X_n$ satisfy assumption $\textup{(GP)}$. Fix some $k\in\{1,\ldots,d-1\}$ and let $1\le i_1<\ldots<i_k\le n$ be any indices. Suppose that the distribution of $(X_1,\ldots,X_n)$ stays invariant under arbitrary simultaneous permutations of the elements inside the blocks $(X_1,\ldots,X_{i_1}),(X_{i_1+1},\ldots,X_{i_2}),\ldots,(X_{i_{k-1}+1},\ldots,X_{i_k})$ and   signed permutations inside the block $(X_{i_k+1},\ldots,X_n)$. Then, it holds that
\begin{align}
\bP[\pos(S_{i_1},\ldots,S_{i_k})\in\cF_k(C_n^B)]&=2\sum_{r=0}^{\lfloor(d-k-1)/2\rfloor}\frac{P_{i_1,i_2-i_1\ldots,i_k-i_{k-1}} ^{(n)}(d-k-2r-1)}{i_1!(i_2-i_1)!\cdot\ldots\cdot(i_k-i_{k-1})!(n-i_k)! 2^{n-i_k}},\label{eq:face_probab1}\\
\bP[\pos(S_{i_1},\ldots,S_{i_k})\notin\cF_k(C_n^B)]&=2\sum_{r=0}^{\lfloor(n-d-1)/2\rfloor}\frac{P_{i_1,i_2-i_1\ldots,i_k-i_{k-1}} ^{(n)}(d-k+2r+1)}{i_1!(i_2-i_1)!\cdot\ldots\cdot(i_k-i_{k-1})!(n-i_k)! 2^{n-i_k}},\label{eq:face_probab2}
\end{align}
where the coefficients $P_{j_1,\ldots,j_k} ^{(n)}(r)$ are defined by
\begin{multline}\label{eq:coefficients_joint_convex_hull}
(t+1)(t+3)\cdot\ldots\cdot(t+2(n-(j_1+\ldots+j_k))-1)\\\times\prod_{l=1}^k(t+1)(t+2)\cdot\ldots\cdot(t+j_l-1)=\sum_{r=0}^{n-k}P_{j_1,\ldots,j_k}^{(n)}(r)t^r.
\end{multline}
By convention, we set $P_{j_1,\ldots,j_k}^{(n)}(r)=0$ for $r<0$ or $r>n-k$.
\end{corollary}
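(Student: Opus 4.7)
The plan is to reduce the face probability to a non-absorption probability of a joint convex hull of $k$ random bridges and one random walk via orthogonal projection, and then apply Theorem~\ref{theorem:absorpt_prob_joint_convex hull}.

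\textbf{Step 1 (face characterization via projection).} Set $F:=\pos\{S_{i_1},\ldots,S_{i_k}\}$ and $M:=(\lin\{S_{i_1},\ldots,S_{i_k}\})^\perp$. By the general position assumption (GP) together with Remark~\ref{remark:faces_simplicial}, $\dim F=k$ almost surely, so $\dim M=d-k$ and $S_{i_j}|M=0$ for all $j$. I will argue that $F\in\cF_k(C_n^B)$ if and only if there exists a linear functional $\phi\in M\setminus\{0\}$ vanishing on $\lin F$ and having a constant sign (say $\langle\phi,S_l\rangle\ge 0$) on all remaining vectors $S_l$ with $l\notin\{i_1,\ldots,i_k\}$. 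Since $\phi\in M$, this is equivalent to $\pos\{S_l|M:l\notin\{i_1,\ldots,i_k\}\}\neq M$, and in turn, by the standard convex/positive hull duality together with the (to be verified) general position of the projected vectors, almost surely equivalent to $0\notin\conv\{S_l|M:l\notin\{i_1,\ldots,i_k\}\}$.

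\textbf{Step 2 (structure of the projected vectors).} I will identify the projected vectors block by block. Setting $i_0:=0$ and using $S_{i_j}|M=0$, for each $j\in\{1,\ldots,k\}$ and $l\in\{i_{j-1}+1,\ldots,i_j-1\}$ one has $S_l|M=(X_{i_{j-1}+1}+\ldots+X_l)|M$, so the projected points from block $j$ together with $0=S_{i_j}|M$ form a random bridge of length $m_j:=i_j-i_{j-1}$ in $M$, whose increments are $X_{i_{j-1}+1}|M,\ldots,X_{i_j}|M$. The projections of $S_{i_k+1},\ldots,S_n$ form an ordinary random walk of length $n-i_k$ in $M$ with increments $X_{i_k+1}|M,\ldots,X_n|M$. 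The assumed block-wise permutation (respectively signed permutation) invariance of $(X_1,\ldots,X_n)$ transfers directly to the increments of these $k$ bridges and one walk, so assumption~\eqref{eq:exchangeability_joint_walks_bridges} holds for the projected sequences in the ambient space $M\cong\R^{d-k}$.

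\textbf{Step 3 (application of Theorem~\ref{theorem:absorpt_prob_joint_convex hull}).} One still has to check that any $d-k$ of the projected vectors are linearly independent a.s.; this is a straightforward consequence of (GP) applied to $S_{i_1},\ldots,S_{i_k}$ together with the projected vectors, arguing as in the proof of Corollary~\ref{cor:prob_walkbridge_intersects_V}. With $s=1$, $n_1=n-i_k$, $r=k$ and $m_j=i_j-i_{j-1}$, the polynomial appearing in Theorem~\ref{theorem:absorpt_prob_joint_convex hull} is precisely the one defining $P^{(n)}_{j_1,\ldots,j_k}(r)$ in~\eqref{eq:coefficients_joint_convex_hull} with $j_l=i_l-i_{l-1}$. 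Replacing the ambient dimension $d$ in Theorem~\ref{theorem:absorpt_prob_joint_convex hull} by $\dim M=d-k$ and reading off the non-absorption probability yields~\eqref{eq:face_probab1}, and reading off the absorption probability yields~\eqref{eq:face_probab2}.

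\textbf{Main obstacle.} The one point requiring genuine care is the equivalence in Step~1 between ``$F$ is a face'' and ``$0\notin\conv\{S_l|M:l\notin\{i_1,\ldots,i_k\}\}$'', because the first is expressed via non-strict separation (closed half-spaces) while the second is a strict condition. I expect this to be bridged by showing that under the general position assumption (GP) the boundary event (some projected point lies on the separating hyperplane through $0$) has probability zero; this is the same type of argument that reduces $\pos$-non-degeneracy to $\conv$-non-absorption in Lemma~2.13 of~\cite{GK2020_WeylCones} and in the proof of Corollary~\ref{cor:prob_walkbridge_intersects_V} above, so no new idea is needed, only careful bookkeeping.
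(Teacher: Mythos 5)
Your proposal is correct, but it takes a more self-contained route than the paper's actual proof. The paper disposes of this corollary in a few lines: it observes the equivalence $\pos\{S_{i_1},\ldots,S_{i_k}\}\in\cF_k(C_n^B)\Leftrightarrow \conv\{0,S_{i_1},\ldots,S_{i_k}\}\in\cF_k(\conv\{0,S_1,\ldots,S_n\})$, cites the face-probability formula of Theorem~1.6 of~\cite{KVZ17} for the latter event (noting that the weaker block-wise invariance suffices for that proof), and obtains~\eqref{eq:face_probab2} simply by checking that the two right-hand sides sum to $1$ (substitute $t=\pm1$ in~\eqref{eq:coefficients_joint_convex_hull}). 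What you do instead is essentially re-prove the cited theorem: the projection onto $M=\lin^\perp\{S_{i_1},\ldots,S_{i_k}\}$, the face characterization $F\in\cF_k(C_n^B)\Leftrightarrow C_n^B|M\neq M$, the identification of the projected points as $k$ bridges plus one walk satisfying~\eqref{eq:exchangeability_joint_walks_bridges}, and the application of Theorem~\ref{theorem:absorpt_prob_joint_convex hull} in ambient dimension $d-k$. This is exactly the machinery the paper itself formalizes later in Lemmas~\ref{lem:side_characterization_proj} and~\ref{lemma:projection_walk} (for use in the proofs of Theorems~\ref{theorem:gen_angle_sums_quermass_A} and~\ref{theorem:gen_angle_sums_quermass_B}), so your argument is sound and all the pieces you flag as needing care — the strict-versus-non-strict separation issue, the general position of the projected vectors, the transfer of the block-wise invariance — are handled there in just the way you anticipate. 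The trade-off: your version is longer but makes the corollary independent of the external reference; the paper's version is shorter but leans on~\cite{KVZ17} and on the complementarity trick for~\eqref{eq:face_probab2}, which you instead read off directly from the absorption probability (equivalent, since the two formulas of Theorem~\ref{theorem:absorpt_prob_joint_convex hull} already sum to $1$).
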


\begin{proof}
It is easy to check that, for indices $k\in\{1,\ldots,d-1\}$ and $1\le i_1<\ldots<i_k\le n$, we have the following equivalence:
\begin{align*}
\pos\{S_{i_1},\ldots,S_{i_k}\}\in\cF_k(C_n^B)\;\;\Leftrightarrow\;\; \conv\{0,S_{i_1},\ldots,S_{i_k}\}\in\cF_k(\conv\{0,S_1,\ldots,S_n\}).
\end{align*}
Under slightly less general invariance assumptions on $X_1,\ldots,X_n$, the probability of the latter event was computed in~\cite[Theorem~1.6]{KVZ17}. However, it is easy to see that the invariance assumptions we imposed in the present corollary are sufficient for the proof of Theorem~1.6 of~\cite{KVZ17} to be valid. This proves~\eqref{eq:face_probab1}. To prove~\eqref{eq:face_probab2} we need to check that  the right-hand sides of~\eqref{eq:face_probab1} and~\eqref{eq:face_probab2} sum to  $1$. This can easily be  seen by taking $t= \pm 1$ in~\eqref{eq:coefficients_joint_convex_hull}.
\end{proof}

\begin{corollary}\label{cor:face_probabilities_pos_bridge}
Let $S_1,\ldots,S_n$ be a random bridge in $\R^d$ whose increments $X_1,\ldots,X_n$ satisfy assumptions $\textup{(Br)}$ and $\textup{(GP')}$. Fix some $k\in \{1,\ldots,d-1\}$ and some indices $1\le i_1<\ldots<i_k\le n-1$.  Also, suppose that the distribution of the random vector $(X_1,\ldots,X_n)$ stays invariant under arbitrary simultaneous permutations of the elements inside the blocks $(X_1,\ldots,X_{i_1}),\ldots,(X_{i_{k-1}+1},\ldots,X_{i_k})$, $(X_{i_{k}+1},\ldots,X_{n})$.  Then, it holds that
\begin{align*}
\bP[\pos(S_{i_1},\ldots,S_{i_k})\in\cF_k(C_n^A)]&=2\sum_{r=0}^{\lfloor(d-k-1)/2\rfloor}\frac{ Q_{i_1,i_2-i_1\ldots,i_k-i_{k-1}} ^{(n)}(d-k-2r-1)}{i_1!(i_2-i_1)!\cdot\ldots\cdot(i_k-i_{k-1})!(n-i_k)! },\\
\bP[\pos(S_{i_1},\ldots,S_{i_k})\notin\cF_k(C_n^A)]&=2\sum_{r=0}^{\lfloor(n-d-2)/2\rfloor}\frac{ Q_{i_1,i_2-i_1\ldots,i_k-i_{k-1}} ^{(n)}(d-k+2r+1)}{i_1!(i_2-i_1)!\cdot\ldots\cdot(i_k-i_{k-1})!(n-i_k)! },
\end{align*}
where the coefficients $Q_{j_1,\ldots,j_k} ^{(n)}(r)$ are defined by
\begin{align}\label{eq:coefficients_joint_convex_hull_bridge}
\prod_{l=1}^{k+1}(t+1)(t+2)\cdot\ldots\cdot(t+j_l-1)=\sum_{r=0}^{n-k-1}Q_{j_1,\ldots,j_k}^{(n)}(r)t^r.
\end{align}
By convention, we set $j_{k+1}:=n-(j_1+\ldots+j_k)$, and also $Q_{j_1,\ldots,j_k}^{(n)}(r)=0$ for $r<0$ or $r>n-k-1$.
\end{corollary}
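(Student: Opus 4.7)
The plan is to follow the scheme of Corollary~\ref{cor:face_probabilities_pos_walk}, but adapted to the bridge setting. First I would establish the equivalence
\begin{align*}
\pos\{S_{i_1},\ldots,S_{i_k}\} \in \cF_k(C_n^A) \;\Longleftrightarrow\; \conv\{0,S_{i_1},\ldots,S_{i_k}\} \in \cF_k(\conv\{0,S_1,\ldots,S_{n-1}\}),
\end{align*}
which is immediate since, under (GP'), $C_n^A$ is either $\R^d$ or a pointed cone (Remark~\ref{remark:pointed}), and the vectors $S_{i_1},\ldots,S_{i_k}$ are a.s.\ linearly independent. Under somewhat less general invariance assumptions, the probability of the event on the right-hand side was computed in Theorem~1.11 of~\cite{KVZ17}; its proof relies only on the block-permutation invariance we have assumed here and therefore carries over verbatim. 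This already yields the first formula.

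Alternatively, and more instructively, the result can be derived directly. Project onto $M := (\lin\{S_{i_1},\ldots,S_{i_k}\})^\perp$, which has dimension $d-k$. Under this projection the vectors $S_{i_l}$, together with $S_0=S_n=0$ (using (Br)), all vanish, so the projected sequence $(S_j|M)_{0\le j\le n}$ splits into $k+1$ consecutive random bridges in $M$ of lengths $j_l := i_l - i_{l-1}$, with the convention $i_0:=0$ and $i_{k+1}:=n$. A separating hyperplane argument, parallel to the one in Corollary~\ref{cor:prob_walkbridge_intersects_V}, then shows that
\begin{align*}
\pos\{S_{i_1},\ldots,S_{i_k}\} \in \cF_k(C_n^A) \;\Longleftrightarrow\; 0 \notin \conv\{S_j|M : 1 \le j \le n-1,\; j \notin \{i_1,\ldots,i_k\}\}.
\end{align*}

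The main step is then to verify the hypotheses of Theorem~\ref{theorem:absorpt_prob_joint_convex hull} for these $k+1$ projected bridges: the joint invariance~\eqref{eq:exchangeability_joint_walks_bridges} under independent permutations inside each of the $k+1$ increment blocks is exactly what is assumed in the corollary, while the required linear independence of any $d-k$ of the projected partial sums follows from (GP') by the same argument as in Corollary~\ref{cor:prob_walkbridge_intersects_V}, using that $\lin\{S_{i_1},\ldots,S_{i_k}\}$ is a.s.\ in general position with respect to the remaining $S_j$'s. Applying Theorem~\ref{theorem:absorpt_prob_joint_convex hull} with $s=0$, $r=k+1$, bridge lengths $j_1,\ldots,j_{k+1}$ and ambient dimension $d-k$ then produces the first formula, since the generating polynomial in Theorem~\ref{theorem:absorpt_prob_joint_convex hull} coincides with the one defining $Q_{j_1,\ldots,j_k}^{(n)}(r)$ in~\eqref{eq:coefficients_joint_convex_hull_bridge}.

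The second formula is the corresponding absorption probability from the same application of Theorem~\ref{theorem:absorpt_prob_joint_convex hull}. That the two expressions sum to $1$ reduces to the identity obtained by evaluating the polynomial in~\eqref{eq:coefficients_joint_convex_hull_bridge} at $t=\pm 1$; the parity ranges $0 \le r \le \lfloor (d-k-1)/2\rfloor$ and $0 \le r \le \lfloor (n-d-2)/2\rfloor$ are precisely what one obtains after noting that $\deg Q_{j_1,\ldots,j_k}^{(n)} = n-k-1$. The main technical obstacle I anticipate is bookkeeping the transfer of (GP') to the projected data and ensuring that the block-invariance hypothesis translates cleanly to the joint invariance required by Theorem~\ref{theorem:absorpt_prob_joint_convex hull}; both are routine but mirror the projection argument of Corollary~\ref{cor:prob_walkbridge_intersects_V}.
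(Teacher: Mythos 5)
Your first paragraph is exactly the paper's proof: reduce the face event to $\conv\{0,S_{i_1},\ldots,S_{i_k}\}$ being a face of $\conv\{0,S_1,\ldots,S_{n-1}\}$, invoke Theorem~1.11 of~\cite{KVZ17} under the (sufficient) block-permutation invariance, and obtain the second formula by noting the two expressions sum to $1$ via $t=\pm 1$ in~\eqref{eq:coefficients_joint_convex_hull_bridge}. Your alternative projection argument is correct as well, but it is essentially the internal mechanism of the cited theorem (and of Lemma~\ref{lemma:projection_bridge}), so the proposal matches the paper's approach.
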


\begin{proof}
This follows from Theorem~1.11 of~\cite{KVZ17} (replacing $(i_1,\ldots,i_{k+1})$ by $(0,i_1,\ldots,i_k)$) with the same argumentation as in the proof of Corollary~\ref{cor:face_probabilities_pos_walk}.
\end{proof}

\subsection{Expected size functionals \texorpdfstring{$Y_{m,l}$}{Y\_\{m,l\}}: Proof of Theorems~\ref{theorem:exp_size_functionals_A} and~\ref{theorem:exp_size_functionals_B}}\label{sec:proof_size_functionals}

Before proving the theorems, we need another lemma  which states that the event that
$\pos\{S_{i_1},\ldots,S_{i_m}\}$ is a face of $C_n^B$ is independent of any functional of this face.

\begin{lemma}\label{lemma:independence}
Let $S_1,\ldots,S_n$ be a random walk whose increments $X_1,\ldots,X_n$ satisfy the assumptions $\textup{($\pm$Ex)}$ and $\textup{(GP)}$. Fix $1\le m\le d-1$ and indices $1\le i_1<\ldots<i_m\le n$ and take a measurable function $\varphi:\cC^d\to[0,\infty]$, where $\cC^d$ denotes the space of all polyhedral cones in $\R^d$. Then, it holds that
\begin{multline*}
\bE\big[\varphi(\pos\{S_{i_1},\ldots,S_{i_m}\})\ind_{\{\pos\{S_{i_1},\ldots,S_{i_m}\}\in\cF_m(C_n^B)\}}\big]\\
 =\bE\big[\varphi(\pos\{S_{i_1},\ldots,S_{i_m}\})\big]\cdot\P\big[\pos\{S_{i_1},\ldots,S_{i_m}\}\in\cF_m(C_n^B)\big].
\end{multline*}
\end{lemma}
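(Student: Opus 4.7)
\emph{Plan of the proof.}
The plan is to condition on $\cG:=\sigma(S_{i_1},\ldots,S_{i_m})$ and show that the conditional probability of the face event is a deterministic constant $p$. Since $F:=\pos\{S_{i_1},\ldots,S_{i_m}\}$ is $\cG$-measurable, the tower property then gives
\[
\E\bigl[\varphi(F)\ind_{\{F\in\cF_m(C_n^B)\}}\bigr]=\E\bigl[\varphi(F)\cdot p\bigr]=\E[\varphi(F)]\cdot\P[F\in\cF_m(C_n^B)],
\]
which is the claim (taking $\varphi\equiv 1$ identifies $p$).

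First, I would rewrite the face event as an absorption problem in $L^\perp$, where $L:=\lin\{S_{i_1},\ldots,S_{i_m}\}$; by $\textup{(GP)}$ this is $m$-dimensional. Letting $\pi\colon\R^d\to L^\perp$ denote the orthogonal projection, a supporting-hyperplane argument combined with $\textup{(GP)}$ and the equivalence between $\pos=\R^{d-m}$ and $0\in\conv$ under general position (see~\cite[Lemma~2.13]{GK2020_WeylCones}) yields
\[
\{F\in\cF_m(C_n^B)\}=\bigl\{0\notin\conv\{\pi(S_j):j\in\{1,\ldots,n\}\setminus\{i_1,\ldots,i_m\}\}\bigr\}.
\]
Setting $i_0:=0$ and $i_{m+1}:=n$, and using that $S_{i_k}\in L$, one checks that $\pi(S_j)=\pi(X_{i_{k-1}+1}+\ldots+X_j)$ whenever $i_{k-1}<j\le i_k$. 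Consequently, after projection, the non-zero vectors split into $m$ projected random bridges (one for each block $k\le m$ of length $i_k-i_{k-1}$, the endpoint being $\pi(S_{i_k}-S_{i_{k-1}})=0$) together with one projected random walk of length $n-i_m$ coming from the last block.

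Second, I would verify the hypotheses of Theorem~\ref{theorem:absorpt_prob_joint_convex hull} conditionally on $\cG$. By $\textup{($\pm$Ex)}$, the joint law of $(X_1,\ldots,X_n)$ is invariant under the subgroup $H\subseteq\text{Sym}(n)\times\{\pm1\}^n$ consisting of within-block permutations (for all $m+1$ blocks) together with arbitrary sign changes inside the last block---precisely the stabilizer of $(S_{i_1},\ldots,S_{i_m})$ in the full symmetry group. Hence the conditional law given $\cG$ remains $H$-invariant, and so does its push-forward under the linear projection $\pi$; the resulting invariance of the projected increments in $L^\perp$ is exactly the symmetry~\eqref{eq:exchangeability_joint_walks_bridges} for $r=m$ bridges of lengths $i_k-i_{k-1}$ and $s=1$ walk of length $n-i_m$. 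The general position assumption of Theorem~\ref{theorem:absorpt_prob_joint_convex hull} in the $(d-m)$-dimensional ambient space $L^\perp$ transfers from $\textup{(GP)}$, because adjoining any $d-m$ projected vectors $\pi(S_{j_1}),\ldots,\pi(S_{j_{d-m}})$ to $S_{i_1},\ldots,S_{i_m}$ yields $d$ linearly independent vectors in $\R^d$. Applying Theorem~\ref{theorem:absorpt_prob_joint_convex hull} conditionally on $\cG$ therefore expresses $\P[F\in\cF_m(C_n^B)\mid\cG]$ by a closed-form formula depending only on $d$, $m$, and the block lengths $i_1,i_2-i_1,\ldots,n-i_m$; in particular it is a deterministic constant $p$, as needed.

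The main obstacle I anticipate is the invariance step: after conditioning on $\cG$, one must pin down exactly which elements of $\text{Sym}(n)\times\{\pm1\}^n$ preserve $\cG$ and then match the surviving symmetries to the asymmetric hypothesis~\eqref{eq:exchangeability_joint_walks_bridges}, which requires pure within-block permutations for the bridge parts and full signed permutations for the walk part. The matching succeeds by design: the bridge hypothesis forbids sign changes---exactly what conditioning on each $S_{i_k}$ precludes inside the first $m$ blocks---while the last block has no constraint on its sum and therefore retains the sign-change freedom demanded of the walk.
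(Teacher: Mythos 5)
Your proof is correct, and it rests on the same underlying observation as the paper's --- after the face $\pos\{S_{i_1},\ldots,S_{i_m}\}$ is pinned down, the law of the increments retains enough within-block symmetry to force the face event's probability to be a constant --- but the implementation is genuinely different. The paper avoids regular conditional distributions altogether: it introduces auxiliary independent uniform permutations of each block (and uniform signs on the last block), uses \textup{($\pm$Ex)} to argue that the block-resampled walk has the same joint law as the original, and then conditions on the full data $(X_1,\ldots,X_n)$, at which point the face is deterministic and the face-event probability over the artificial randomization is given by the pre-derived face-probability formula of Corollary~\ref{cor:face_probabilities_pos_walk}, independently of the data. You instead condition directly on $\cG=\sigma(S_{i_1},\ldots,S_{i_m})$, invoke the standard fact that a conditional law given an $H$-invariant statistic is itself $H$-invariant (unproblematic here, since the group is finite and the spaces are Polish), and apply Theorem~\ref{theorem:absorpt_prob_joint_convex hull} conditionally in $L^\perp$ after converting the face event into a non-absorption event by projection --- in effect re-deriving inline the content of Lemma~\ref{lem:side_characterization_proj} and of Corollary~\ref{cor:face_probabilities_pos_walk}. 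Your route makes the ``sufficiency'' structure explicit and is arguably more conceptual; the paper's resampling device buys the same factorization with only elementary manipulations and reuses Corollary~\ref{cor:face_probabilities_pos_walk} as a black box. Two small points to tidy up: the identification of $\{F\in\cF_m(C_n^B)\}$ with the non-absorption event should be stated as holding outside a null event, and blocks of length one produce degenerate one-increment bridges that Theorem~\ref{theorem:absorpt_prob_joint_convex hull} formally excludes ($m_j\neq 1$); these contribute no points to the convex hull and a trivial factor to the generating polynomial, so they should simply be discarded, exactly as the paper's formulation does implicitly.
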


\begin{proof}
Let $\varsigma_1,\ldots,\varsigma_{m+1}$ be uniformly distributed permutations of the sets $\{1,\ldots,i_1\},\ldots,\{i_{m-1}+1,\ldots,i_m\},\{i_m+1,\ldots,n\}$, respectively. Furthermore, let $\epsilon$ be a uniformly distributed vector of signs on $\{\pm 1\}^{n-i_m}$. By extending the probability space, we can assume each random permutation and the vector of signs to be independent of each other and of the increments $X_1,\ldots,X_n$. We then define $S^*_1,\ldots,S^*_n$ to be the random walk with increments
\begin{align}\label{eq:signed_permut_increments}
X_{\varsigma_1(1)},\ldots,X_{\varsigma_1(i_1)},
\;\;\ldots,\;\;
X_{\varsigma_{m}(i_{m-1}+1)},\ldots,X_{\varsigma_{m}(i_m)},
\epsilon_1X_{\varsigma_{m+1}(i_m+1)},\ldots,\epsilon_{n-i_m}X_{\varsigma_{m+1}(n)}.
\end{align}
We set $(C_n^B)^*:=\pos\{S^*_1,\ldots,S^*_n\}$. Using assumption ($\pm$Ex) and the independence of the random permutations and random signs from the random walk $S_1,\ldots,S_n$, we obtain that
\begin{multline}\label{eq:exp_equal}
\bE\big[\varphi(\pos\{S_{i_1},\ldots,S_{i_m}\})\ind_{\{\pos\{S_{i_1},\ldots,S_{i_m}\}\in\cF_m(C_n^B)\}}\big]\\
=	\bE\big[\varphi(\pos\{S^*_{i_1},\ldots,S^*_{i_m}\})\ind_{\{\pos\{S^*_{i_1},\ldots,S^*_{i_m}\}\in\cF_m((C_n^B)^*)\}}\big].
\end{multline}
Let $(x_1,\ldots,x_n)$ be some realization of $(X_1,\ldots,X_n)$. The corresponding realization of the partial sums $(S_1,\ldots,S_n)$ is denoted by $(s_1,\ldots,s_n)$.
Now, we condition on the event $(X_1,\ldots,X_n) = (x_1,\ldots,x_n)$, while still allowing $\varsigma_1,\ldots,\varsigma_{m+1}$ and $\epsilon$ to be random. The corresponding values of $(S_1^*,\ldots,S_n^*)$ are denoted by $(s_1^*,\ldots,s_n^*)$. By using the independence  and the fact that $s_{i_1}^*=s_{i_1},\ldots,s_{i_m}^*=s_{i_m}$, we obtain
\begin{align*}
&\bE\big[\varphi(\pos\{S^*_{i_1},\ldots,S^*_{i_m}\})\ind_{\{\pos\{S^*_{i_1},\ldots,S^*_{i_m}\}\in\cF_m((C_n^B)^*)\}}\big]\\
&	\quad=\int_{(\R^d)^n}\E\big[\varphi(\pos\{S^*_{i_1},\ldots,S^*_{i_m}\})\ind_{\{\pos\{S^*_{i_1},\ldots,S^*_{i_m}\}\in\cF_m((C_n^B)^*)\}}\big|X_1=x_1,\ldots,X_n=x_n\big]\\
&	\qquad\qquad\qquad\times\bP_{(X_1,\ldots,X_n)}(\dint(x_1,\ldots,x_n))\\
&	\quad=\int_{(\R^d)^n} \varphi(\pos\{s_{i_1},\ldots,s_{i_m}\})\P\big[\pos\{s^*_{i_1},\ldots,s^*_{i_m}\}\in\cF_m\big((c_n^B)^*\big)\big]\,\bP_{(X_1,\ldots,X_n)}(\dint(x_1,\ldots,x_n)).
\end{align*}
Here, we defined $(c_n^B)^*:=\pos\{s^*_1,\ldots,s^*_n\}$. Observe that $s^*_1,\ldots,s^*_n$ are random variables because they depend on the random permutations and random signs introduced above.
As constructed, the random walk $s^*_1,\ldots,s^*_n$ satisfies the invariance assumption of Corollary~\ref{cor:face_probabilities_pos_walk} and also the  general position assumption (GP) for $\bP_{(X_1,\ldots,X_n)}$-almost every  vector $(x_1,\ldots,x_n)$. Thus, we can apply Corollary~\ref{cor:face_probabilities_pos_walk} to the probability under the integral. Observe that this probability does not depend on $(x_1,\ldots,x_n)$.  Taking~\eqref{eq:exp_equal} into account we obtain
\begin{align*}
&\bE\big[\varphi(\pos\{S_{i_1},\ldots,S_{i_m}\})\ind_{\{\pos\{S_{i_1},\ldots,S_{i_m}\}\in\cF_m(C_n^B)\}}\big]\\
&	\quad= \frac{2 \sum_{r=0}^{\lfloor(d-m-1)/2\rfloor} P_{i_1,i_2-i_1,\ldots,i_m-i_{m-1}} ^{(n)}(d-m-2r - 1)}{i_1!(i_2-i_1)!\cdot\ldots\cdot(i_m-i_{m-1})!(n-i_m)! 2^{n-i_m}}\cdot\bE\big[\varphi(\pos\{S_{i_1},\ldots,S_{i_m}\})\big]\\
&	\quad=\bE\big[\varphi(\pos\{S_{i_1},\ldots,S_{i_m}\})\big]\bP\big[\pos\{S_{i_1},\ldots,S_{i_m}\}\in\cF_m(C_n^B)\big],
\end{align*}
which completes the proof.
\end{proof}

The analogous result in the $A$-case is  the following lemma. It is proven in the same way using Corollary~\ref{cor:face_probabilities_pos_bridge}.

\begin{lemma}\label{lemma:independence_bridge}
Let $S_1,\ldots,S_n$ be a random bridge whose increments $X_1,\ldots,X_n$ satisfy the assumptions $\textup{(Ex)}$, $\textup{(Br)}$ and $\textup{(GP')}$. Fix $1\le m\le d-1$ and indices $1\le i_1<\ldots<i_m\le n-1$ and take a measurable function $\varphi:\cC^d\to[0,\infty]$, where $\cC^d$ denotes the space of all polyhedral cones in $\R^d$. Then, it holds that
\begin{multline*}
\bE\big[\varphi(\pos\{S_{i_1},\ldots,S_{i_m}\})\ind_{\{\pos\{S_{i_1},\ldots,S_{i_m}\}\in\cF_m(C_n^A)\}}\big]\\
 =\bE\big[\varphi(\pos\{S_{i_1},\ldots,S_{i_m}\})\big]\cdot\P\big[\pos\{S_{i_1},\ldots,S_{i_m}\}\in\cF_m(C_n^A)\big].
\end{multline*}
\end{lemma}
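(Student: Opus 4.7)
The plan is to mirror the proof of Lemma~\ref{lemma:independence} almost verbatim, simply replacing the signed symmetric structure of the walk case by the unsigned exchangeability structure of the bridge case, and invoking Corollary~\ref{cor:face_probabilities_pos_bridge} in place of Corollary~\ref{cor:face_probabilities_pos_walk}. Concretely, I enlarge the probability space to carry $m+1$ independent uniformly distributed random permutations $\varsigma_1,\ldots,\varsigma_{m+1}$ of the blocks
$$\{1,\ldots,i_1\},\;\{i_1+1,\ldots,i_2\},\;\ldots,\;\{i_{m-1}+1,\ldots,i_m\},\;\{i_m+1,\ldots,n\},$$
taken independent of $(X_1,\ldots,X_n)$, and define a new sequence of increments $X_1^*,\ldots,X_n^*$ by applying $\varsigma_j$ inside the $j$-th block. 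The partial sums of these rearranged increments are denoted $S_1^*,\ldots,S_n^*$ and the corresponding cone by $(C_n^A)^*:=\pos\{S_1^*,\ldots,S_{n-1}^*\}$. Since the concatenated permutation lies in $\mathrm{Sym}(n)$, assumption (Ex) yields that $(X_1^*,\ldots,X_n^*)\eqdistr(X_1,\ldots,X_n)$, and permuting within blocks preserves the total sum, so the bridge property (Br) is untouched. Moreover, by construction $S_{i_j}^*=S_{i_j}$ for every $j=1,\ldots,m$, so
$$\bE\!\left[\varphi(\pos\{S_{i_1},\ldots,S_{i_m}\})\ind_{\{\pos\{S_{i_1},\ldots,S_{i_m}\}\in\cF_m(C_n^A)\}}\right]=\bE\!\left[\varphi(\pos\{S_{i_1}^*,\ldots,S_{i_m}^*\})\ind_{\{\pos\{S_{i_1}^*,\ldots,S_{i_m}^*\}\in\cF_m((C_n^A)^*)\}}\right].$$

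Next I condition on a realisation $(X_1,\ldots,X_n)=(x_1,\ldots,x_n)$, so that on the inner expectation only the permutations $\varsigma_1,\ldots,\varsigma_{m+1}$ remain random. Writing $(s_1,\ldots,s_n)$ for the corresponding partial sums and $(s_1^*,\ldots,s_n^*)$, $(c_n^A)^*$ for the permuted analogues, the identity $s_{i_j}^*=s_{i_j}$ extracts the factor $\varphi(\pos\{s_{i_1},\ldots,s_{i_m}\})$ out of the conditional expectation, leaving
$$\bE\big[\varphi(\pos\{S_{i_1}^*,\ldots,S_{i_m}^*\})\ind_{\{\cdots\}}\,\big|\,X=x\big]=\varphi(\pos\{s_{i_1},\ldots,s_{i_m}\})\cdot\P\!\left[\pos\{s_{i_1}^*,\ldots,s_{i_m}^*\}\in\cF_m((c_n^A)^*)\right].$$
The conditional random walk of sums $s_1^*,\ldots,s_n^*$ satisfies, by the independence of the $\varsigma_j$'s, the block-wise exchangeability hypothesis of Corollary~\ref{cor:face_probabilities_pos_bridge}, and by (GP') it fulfils the general position requirement for $\bP_{(X_1,\ldots,X_n)}$-almost every $(x_1,\ldots,x_n)$. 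Hence Corollary~\ref{cor:face_probabilities_pos_bridge} applies and gives a value for the conditional face probability depending only on the block sizes $i_1,i_2-i_1,\ldots,i_m-i_{m-1},n-i_m$ and thus independent of $(x_1,\ldots,x_n)$.

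Integrating back against $\bP_{(X_1,\ldots,X_n)}$ factors the constant face probability out of the integral, and since this constant equals $\bP[\pos\{S_{i_1},\ldots,S_{i_m}\}\in\cF_m(C_n^A)]$ (apply the same argument with $\varphi\equiv 1$), the asserted product formula follows. The only point that requires any genuine care, rather than routine transcription from the proof of Lemma~\ref{lemma:independence}, is the verification that (GP') continues to hold almost surely for $S_1^*,\ldots,S_{n-1}^*$ conditionally on $(X_1,\ldots,X_n)$; this is immediate because the permuted sums are, up to relabelling inside blocks, linear combinations of the original $S_i$'s with coefficients in $\{-1,0,1\}$ of a form that never collapses a generic $d$-tuple (or alternatively, because (GP') for the unordered multiset of increments passes through the concatenated permutation by (Ex)).
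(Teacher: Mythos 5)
Your proposal is correct and follows essentially the same route as the paper, which proves this lemma by transcribing the argument for Lemma~\ref{lemma:independence} with the signed block-permutations replaced by plain block-permutations and Corollary~\ref{cor:face_probabilities_pos_walk} replaced by Corollary~\ref{cor:face_probabilities_pos_bridge}. One small caveat: of the two justifications you offer for the conditional validity of (GP'), only the second (passing the almost-sure general position through each of the finitely many concatenated block-permutations via (Ex)) is sound --- permuting increments inside a block genuinely changes the intermediate partial sums, so the ``coefficients in $\{-1,0,1\}$'' argument does not work --- but since you supply the correct alternative, the proof stands.
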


Now, we are finally able to prove Theorem~\ref{theorem:exp_size_functionals_B}.

\begin{proof}[Proof of Theorem~\ref{theorem:exp_size_functionals_B}]
Let $0\le l<m\le d-1$ be given. It holds that
\begin{align}\label{eq:sum1}
\E Y_{m,l}(C_n^B)
&	=\E\sum_{F\in\cF_m(C_n^B)}U_l(F)\notag\\
&	=\sum_{1\le i_1<\ldots<i_m\le n}\E\left[U_l\big(\pos\{S_{i_1},\ldots,S_{i_m}\}\big)\1_{\{\pos\{S_{i_1},\ldots,S_{i_m}\}\in\cF_m(C_n^B)\}}\right],
\end{align}
where we used Remark~\ref{remark:faces_simplicial} in the last step. Because of (GP),
$\pos\{S_{i_1},\ldots,S_{i_m}\}$  is  not a linear subspace. Thus, we can use the definition of $U_l$ given in~\eqref{eq:def_U} and Fubini's theorem to rewrite the sum on the right-hand side of~\eqref{eq:sum1} as follows:
\begin{align*}
\int_{G(d,d-l)}\frac{1}{2}\sum_{1\le i_1<\ldots<i_m\le n}\P\big[\pos\{S_{i_1},\ldots,S_{i_m}\}\cap V\neq\{0\},\pos\{S_{i_1},\ldots,S_{i_m}\}\in\cF_m(C_n^B)\big]\,\nu_{d-l}(\dint V).
\end{align*}
In order to prove~\eqref{eq:size_funct_B_unconditioned}, we show that for $\nu_{d-l}$-a.e.~$V\in G(d,d-l)$, it holds that
\begin{align}\label{eq:to_prove_size_funct}
&\sum_{1\le i_1<\ldots<i_m\le n}\P\big[\pos\{S_{i_1},\ldots,S_{i_m}\}\cap V\neq\{0\},\pos\{S_{i_1},\ldots,S_{i_m}\}\in\cF_m(C_n^B)\big]\notag\\
&	\quad=\frac{4}{2^nn!}\left(\sum_{r=0}^\infty \stirlingb{m}{l+2r+1}\right)\left(\sum_{r=0}^\infty \stirlingb{n}{d-2r-1}\stirlingsecb{d-2r-1}{m}\right).
\end{align}
To this end, we use Lemma~\ref{lemma:independence} with $\varphi(C):=\1\{C\cap V\neq\{0\}\}$ to obtain
\begin{align}\label{eq:sum2}
&\sum_{1\le i_1<\ldots<i_m\le n}\P\big[\pos\{S_{i_1},\ldots,S_{i_m}\}\cap V\neq\{0\},\pos\{S_{i_1},\ldots,S_{i_m}\}\in\cF_m(C_n^B)\big]\notag\\
&	\quad=\sum_{1\le i_1<\ldots<i_m\le n}\P\big[\pos\{S_{i_1},\ldots,S_{i_m}\}\cap V\neq\{0\}\big]\P\big[\pos\{S_{i_1},\ldots,S_{i_m}\}\in\cF_m(C_n^B)\big]\notag\\
&	\quad=\sum_{\substack{j_1,\ldots,j_m\in\N:\\j_1+\ldots+j_m\le n}}\bP\big[\pos\{S_{j_1},\ldots,S_{j_1+\ldots+j_m}\}\cap V\neq\{0\}\big]\bP\big[\pos\{S_{j_1},\ldots,S_{j_1+\ldots+j_m}\}\in\cF_m(C_n^B)\big],
\end{align}
where used the transformation of indices $j_1:=i_1,j_2:=i_2-i_1,\ldots,j_m:=i_m-i_{m-1}$ in the last equation. Now, we want to apply Corollary~\ref{cor:face_probabilities_pos_walk} to the second probability, and Remark~\ref{remark:prob_walkbridge_intersects_V} to the first probability. Since the latter is not directly applicable we need to invest some further work. Therefore, fix a permutation $\sigma\in\text{Sym}(m)$.
For each tuple $(j_1,\ldots,j_m)\in\N^m$ satisfying $j_1+\ldots+j_m\le n$, the tuple $(j_{\sigma(1)},\ldots,j_{\sigma(m)})$ also satisfies $j_{\sigma(1)}+\ldots+j_{\sigma(m)}\le n$.
Thus, the sum in~\eqref{eq:sum2} does not change if we replace $(j_1,\ldots,j_m)$ by $(j_{\sigma(1)},\ldots,j_{\sigma(m)})$ inside the sum. It follows that the sum in~\eqref{eq:sum2} can be rewritten as follows:
\begin{align}\label{eq:sum3}
&\frac{1}{m!}\sum_{\sigma\in\text{Sym}(m)}\sum_{\substack{j_1,\ldots,j_m\in\N:\\j_1+\ldots+j_m\le n}}\bP\big[\pos\{S_{j_{\sigma(1)}},\ldots,S_{j_{\sigma(1)}+\ldots+j_{\sigma(m)}}\}\cap V\neq\{0\}\big]\notag\\
&\qquad\qquad\times\bP\big[\pos\{S_{j_{\sigma(1)}},\ldots,S_{j_{\sigma(1)}+\ldots+j_{\sigma(m)}}\}\in\cF_m(C_n^B)\big].
\end{align}
We claim that the last probability takes the same value for all permutations $\sigma\in\text{Sym}(n)$. Indeed, Corollary~\ref{cor:face_probabilities_pos_walk} yields that
\begin{align*}
&\bP\big[\pos\{S_{j_{\sigma(1)}},\ldots,S_{j_{\sigma(1)}+\ldots+j_{\sigma(m)}}\}\in\cF_m(C_n^B)\big]\\
&	\quad=\frac{2(P_{j_{\sigma(1)},j_{\sigma(2)},\ldots,j_{\sigma(m)}} ^{(n)}(d-m-1)+P_{j_{\sigma(1)},j_{\sigma(2)},\ldots,j_{\sigma(m)}} ^{(n)}(d-m-3)+\ldots)}{j_{\sigma(1)}!j_{\sigma(2)}!\cdot\ldots\cdot j_{\sigma(m)}!(n-(j_1+\ldots+j_m))! 2^{n-(j_1+\ldots+j_m)}}\\
&	\quad=\frac{2(P_{j_1,j_2,\ldots,j_m} ^{(n)}(d-m-1)+P_{j_1,j_2,\ldots,j_m} ^{(n)}(d-m-3)+\ldots)}{j_1!j_2!\cdot\ldots\cdot j_m!(n-(j_1+\ldots+j_m))! 2^{n-(j_1+\ldots+j_m)}}\\
&	\quad=\bP\big[\pos\{S_{j_1},\ldots,S_{j_1+\ldots+j_m}\}\in\cF_m(C_n^B)\big],
\end{align*}
which follows from the definition the coefficients $P_{j_1,\ldots,j_k}^{(n)}(r)$ in~\eqref{eq:coefficients_joint_convex_hull}.
Furthermore, we introduce the random variables
$$
T_1:=S_{j_1},T_2:=S_{j_1+j_2}-S_{j_1},\ldots,T_m:=S_{j_1+\ldots+j_{m}}-S_{j_1+\ldots+j_{m-1}}
$$
that are the increments of $S_{j_1},S_{j_1+j_2},\ldots,S_{j_1+\ldots+j_m}$. Additionally, let $\varsigma$ be a  random and uniform permutation of the set $\{1,\ldots,m\}$ which is independent of the random walk $S_1,\ldots,S_n$. Using the exchangeability of $X_1,\ldots,X_n$, we can write
\begin{align*}
&\frac{1}{m!}\sum_{\sigma\in\text{Sym}(m)}\bP\big[\pos\{S_{j_{\sigma(1)}},\ldots,S_{j_{\sigma(1)}+\ldots+j_{\sigma(m)}}\}\cap V\neq\{0\}\big]\\
&	\quad=\sum_{\sigma\in\text{Sym}(m)}\frac{\bP[\pos\{T_{\sigma(1)},T_{\sigma(1)}+T_{\sigma(2)},\ldots,T_{\sigma(1)}+\ldots+T_{\sigma(m)}\}\cap V\neq\{0\}]}{m!}\\
&	\quad=\bP\big[\pos\{T_{\varsigma(1)},T_{\varsigma(1)}+T_{\varsigma(2)},\ldots,T_{\varsigma(1)}+\ldots+T_{\varsigma(m)}\}\cap V\neq \{0\}\big].
\end{align*}
The increments $T_{\varsigma(1)},\ldots,T_{\varsigma(m)}$ are symmetrically exchangeable by construction and their
partial sums are a.s.~linearly independent due to (GP). It is well known~\cite[Lemma~13.2.1]{SW08} that  $\nu_{d-l}$-a.e~$V\in G(d,d-l)$ is in general position to any deterministic linear subspace. It follows that we can apply Corollary~\ref{cor:prob_walkbridge_intersects_V}, or rather Remark~\ref{remark:prob_walkbridge_intersects_V}, to the increments $T_{\varsigma(1)},\ldots,T_{\varsigma(m)}$ instead of $X_1,\ldots,X_n$ and obtain
\begin{align*}
\bP\big[\pos\{T_{\varsigma(1)},T_{\varsigma(1)}+T_{\varsigma(2)},\ldots,T_{\varsigma(1)}+\ldots+T_{\varsigma(m)}\}\cap V\neq \{0\}\big]=\frac{2}{2^mm!}\sum_{r=0}^\infty\stirlingb{m}{l+2r+1}.
\end{align*}
Taking all these results into consideration, we arrive at
\begin{align*}
&\sum_{1\le i_1<\ldots<i_m\le n}\P\big[\pos\{S_{i_1},\ldots,S_{i_m}\}\cap V\neq\{0\},\pos\{S_{i_1},\ldots,S_{i_m}\}\in\cF_m(C_n^B)\big]\\
&	\quad=\sum_{\substack{j_1,\ldots,j_m\in\N:\\j_1+\ldots+j_m\le n}}\bP\big[\pos\{S_{j_1},\ldots,S_{j_1+\ldots+j_m}\}\in\cF_m(C_n^B)\big]\frac{2}{2^mm!}\sum_{r=0}^\infty\stirlingb{m}{l+2r+1}\\
&	\quad=\frac{2}{2^mm!}\sum_{r=0}^\infty\stirlingb{m}{l+2r+1}\sum_{\substack{j_1,\ldots,j_m\in\N:\\j_1+\ldots+j_{m}\le n}}\frac{2(P_{j_1,j_2,\ldots,j_m} ^{(n)}(d-m-1)+P_{j_1,j_2,\ldots,j_m} ^{(n)}(d-m-3)+\ldots)}{j_1!j_2!\cdot\ldots\cdot j_{m}!(n-(j_1+\ldots+j_m))! 2^{n-(j_1+\ldots+j_m)}}\\
&	\quad=\left(\frac{4}{2^mm!}\sum_{r=0}^\infty\stirlingb{m}{l+2r+1}\right)\Bigg(\sum_{r=0}^\infty\: \sum_{\substack{j_1,\ldots,j_m\in\N,j_{m+1}\in\N_0:\\j_1+\ldots+j_{m+1}= n}}\frac{P_{j_1,j_2,\ldots,j_m} ^{(n)}(d-m-2r-1)}{j_1!j_2!\cdot\ldots\cdot j_{m+1}! 2^{j_{m+1}}}\Bigg).
\end{align*}
To prove~\eqref{eq:to_prove_size_funct}, which would yield~\eqref{eq:size_funct_B_unconditioned}, we need to verify that
\begin{align}\label{eq:to_prove2_size_funct}
\sum_{\substack{j_1,\ldots,j_m\in\N,j_{m+1}\in\N_0:\\j_1+\ldots+j_{m+1}= n}}\frac{P_{j_1,j_2,\ldots,j_m} ^{(n)}(d-m-2r-1)}{j_1!j_2!\cdot\ldots\cdot j_{m+1}! 2^{j_{m+1}}}=\frac{m!}{2^{n-m}n!}\stirlingb{n}{d-2r-1}\stirlingsecb{d-2r-1}{m}
\end{align}
holds for all admissible $r$.
This is done similarly to the proof of Theorem~1.2 in~\cite{KVZ17}. Using the notation $t^{\overline{j}}:=t(t+1)\cdot\ldots(t+j-1)$ for the rising factorial, and  $[t^N]f(t)=\frac{1}{N!}f^{(N)}(0)$ for the coefficient of $t^N$ in the Taylor expansion of a function $f$ around $0$, we obtain
\begin{align*}
&\sum_{\substack{j_1,\ldots,j_m\in\N,j_{m+1}\in\N_0:\\j_1+\ldots+j_{m+1}= n}}\frac{P_{j_1,j_2,\ldots,j_m} ^{(n)}(d-m-2r-1)}{j_1!j_2!\cdot\ldots\cdot j_{m+1}! 2^{j_{m+1}}}\notag\\
&\quad=	\big[t^{d-m-2r-1}\big]\sum_{\substack{j_1,\ldots,j_m\in\N,j_{m+1}\in\N_0:\\j_1+\ldots+j_{m+1}= n}}\left(\frac{(t+1)(t+3)\cdot\ldots\cdot(t+2j_{m+1}-1)}{2^{j_{m+1}}j_{m+1}!}\cdot\frac{t^{\overline{j_1}}}{t\cdot j_1!}\cdot\ldots\cdot\frac{t^{\overline{j_m}}}{t\cdot j_m!}\right)\notag\\
&\quad=	\big[t^{d-2r-1}\big]\sum_{\substack{j_1,\ldots,j_m\in\N,j_{m+1}\in\N_0:\\j_1+\ldots+j_{m+1}= n}}\left(\frac{(t+1)(t+3)\cdot\ldots\cdot(t+2j_{m+1}-1)}{2^{j_{m+1}}j_{m+1}!}\cdot\frac{t^{\overline{j_1}}}{j_1!}\cdot\ldots\cdot\frac{t^{\overline{j_m}}}{ j_m!}\right),
\end{align*}
which follows directly from the definition of the coefficients $P_{j_1,\ldots,j_m}^{(n)}(r)$ in~\eqref{eq:coefficients_joint_convex_hull}. However, this sum was already evaluated in~\cite{GK20_Schlaefli_orthoschemes} in the proof of Proposition~3.8, or more precisely in (4.5) (with $k$ replaced by $d-2r-1$, $b$ replaced by $1$, and $j$ replaced by $m$) combined with (2.23) in~\cite{GK20_Schlaefli_orthoschemes}. This completes the proof.
\end{proof}

Theorem~\ref{theorem:exp_size_functionals_A} is proven similarly to Theorem~\ref{theorem:exp_size_functionals_B} using the corresponding results for random bridges instead of random walks. We only sketch the proof and point out the essential differences. 

\begin{proof}[Proof of Theorem~\ref{theorem:exp_size_functionals_A}]
Let $0\le l<m\le d-1$ be given. Using Remark~\ref{remark:faces_simplicial} and Lemma~\ref{lemma:independence_bridge}, we obtain
\begin{multline*}
\E Y_{m,l}(C_n^A)
	=\int_{G(d,d-l)}\sum_{1\le i_1<\ldots<i_m\le n-1}\frac{1}{2}\bP[\pos\{S_{i_1},\ldots,S_{i_m}\}\cap V\neq \{0\}]\\ \times\bP[\pos\{S_{i_1},\ldots,S_{i_m}\}\in\cF_m(C_n^A)]\,\nu_{d-l}(\dint V),\notag
\end{multline*}
in the same way as in the proof of Theorem~\ref{theorem:exp_size_functionals_B}. From now on, we ignore the integral since the sum under the integral will turn out to be constant for $\nu_{d-l}$-a.e.~$V\in G(d,d-l)$. Introducing the summation indices $j_1:= i_1$, $j_2:= i_2-i_1,\ldots,j_{m}:= i_{m}-i_{m-1}$, $j_{m+1} = n-i_m$, and recalling that $S_n=0$ a.s.\ we can rewrite the above sum as
\begin{align}\label{eq:sum_65}
&\sum_{\substack{j_1,\ldots,j_{m+1}\in\N:\\j_1+\ldots+j_{m+1} = n}}\frac 12\bP[\pos\{S_{j_1},\ldots,S_{j_1+\ldots+j_{m+1}}\}\cap V\neq\{0\}]\bP[\pos\{S_{j_1},\ldots,S_{j_1+\ldots+j_m}\}\in\cF_m(C_n^A)]\notag\\
&	\quad=\frac{1}{(m+1)!}\sum_{\sigma\in\text{Sym}(m+1)}\sum_{\substack{j_1,\ldots,j_{m+1}\in\N:\\j_1+\ldots+j_{m+1} = n}}\frac 12\bP[\pos\{S_{j_{\sigma(1)}},\ldots,S_{j_{\sigma(1)}+\ldots+j_{\sigma(m+1)}}\}\cap V\neq\{0\}]\\
&	\quad\quad\quad\times\bP[\pos\{S_{j_{\sigma(1)}},\ldots,S_{j_{\sigma(1)}+\ldots+j_{\sigma(m)}}\}\in\cF_m(C_n^A)],\notag
\end{align}
where we used that for each tuple $(j_1,\ldots,j_{m+1})\in\N^{m+1}$ with $j_1+\ldots+j_{m+1} =  n$ the tuple $(j_{\sigma(1)},\ldots,j_{\sigma(m+1)})$ satisfies the same condition. In the same way as in the proof of Theorem~\ref{theorem:exp_size_functionals_B}, we can use Corollary~\ref{cor:face_probabilities_pos_bridge} to observe that the probability $\bP[\pos\{S_{j_{\sigma(1)}},\ldots,S_{j_{\sigma(1)}+\ldots+j_{\sigma(m)}}\}\in\cF_m(C_n^A)]$ does not depend on the permutation $\sigma$. Introduce the random variables
\begin{align*}
T_1:=S_{j_1},T_2:=S_{j_1+j_2}-S_{j_1},\ldots,T_m:=S_{j_1+\ldots+j_m}-S_{j_1+\ldots+j_{m-1}},T_{m+1}=S_n-S_{j_1+\ldots+j_m},
\end{align*}
and a random and uniformly distributed permutation $\varsigma$ of $\{1,\ldots,m+1\}$ which is independent of $X_1,\ldots,X_n$. Using the exchangeability of $X_1,\ldots,X_n$, we observe that
\begin{align*}
\lefteqn{\frac{1}{(m+1)!}\sum_{\sigma\in\text{Sym}(m+1)}
\bP[\pos\{S_{j_{\sigma(1)}},\ldots,S_{j_{\sigma(1)}+\ldots+j_{\sigma(m+1)}}\}\cap V\neq\{0\}]}\\
&=\bP\big[\pos\{T_{\varsigma(1)}, T_{\varsigma(1)}+T_{\varsigma(2)},\ldots,T_{\varsigma(1)}+\ldots+T_{\varsigma(m+1)}\}\cap V\neq\{0\}\big]\\
&=\bP\big[\pos\{T_{\varsigma(1)}, T_{\varsigma(1)}+T_{\varsigma(2)},\ldots,T_{\varsigma(1)}+\ldots+T_{\varsigma(m)}\}\cap V\neq\{0\}\big]\\
&=\frac{2}{(m+1)!}\sum_{r=1}^\infty\stirling {m+1}{l+2r}.
\end{align*}
Note that we applied Remark~\ref{remark:prob_walkbridge_intersects_V} (with $n$ replaced by $m+1$) in the last equation. Note also that Remark~\ref{remark:prob_walkbridge_intersects_V}  was applicable (for $\nu_{d-l}$-a.e~$V\in G(d,d-l)$) since $T_{\varsigma(1)},\ldots,T_{\varsigma(m+1)}$ satisfy the conditions (Ex), (Br) and (GP') (again with $n$ replaced by $m+1$).  Combining all of these results and inserting them into~\eqref{eq:sum_65} yields
\begin{align*}
\E Y_{m,l}(C_n^A)
&	=\frac 12\left(\frac{2}{(m+1)!}\sum_{r=1}^\infty\stirling {m+1}{l+2r}\right)\sum_{\substack{j_1,\ldots,j_{m+1}\in\N:\\j_1+\ldots+j_{m+1}= n}}\bP[\pos\{S_{j_1},\ldots,S_{j_1+\ldots+j_{m}}\}\in\cF_m(C_n^A)]\\
&	=\left(\frac{2}{(m+1)!}\sum_{r=1}^\infty\stirling {m+1}{l+2r}\right)\sum_{r=0}^\infty\sum_{\substack{j_1,\ldots,j_{m+1}\in\N:\\j_1+\ldots+j_{m+1}=n}}\frac{ Q_{j_1,\ldots,j_m}^{(n)}(d-m-2r-1)}{j_1!\cdot\ldots\cdot j_{m+1}! },
\end{align*}
where we used Corollary~\ref{cor:face_probabilities_pos_bridge} in the last equation, while the coefficients  $Q_{j_1,\ldots,j_m}^{(n)}$ are defined in~\eqref{eq:coefficients_joint_convex_hull_bridge}.
In order to prove~\eqref{eq:size_fct_A}, it remains to show that
\begin{align}\label{eq:to_prove_A}
\sum_{\substack{j_1,\ldots,j_{m+1}\in\N:\\j_1+\ldots+j_{m+1}=n}}\frac{ Q_{j_1,\ldots,j_m}^{(n)}(d-m-2r-1)}{j_1!\cdot\ldots\cdot j_{m+1}! }
	=\frac{(m+1)!}{n!} \stirling n{d-2r}\stirlingsec{d-2r}{m+1}
\end{align}
holds for all admissible $r$. Again, let   $t^{\overline{j}}=t(t+1)\cdot\ldots\cdot (t+j-1)$ denote the rising factorial. Then, using the definition of $Q_{j_1,\ldots,j_m}^{(n)}$ we have
\begin{align*}
\sum_{\substack{j_1,\ldots,j_{m+1}\in\N:\\j_1+\ldots+j_{m+1}=n}}\frac{ Q_{j_1,\ldots,j_m}^{(n)}(d-m-2r-1)}{j_1!\cdot\ldots\cdot j_{m+1}! }
&	=\big[t^{d-m-2r-1}\big]\sum_{\substack{j_1,\ldots,j_{m+1}\in\N:\\j_1+\ldots+j_{m+1}=n}}\frac{t^{\overline{j_1}}}{t\cdot j_1!}\ldots \frac{t^{\overline{j_{m+1}}}}{t\cdot j_{m+1}!}\notag\\
&	=\big[t^{d-2r}\big]\sum_{\substack{j_1,\ldots,j_{m+1}\in\N:\\j_1+\ldots+j_{m+1}=n}}\frac{t^{\overline{j_1}}}{ j_1!}\ldots \frac{t^{\overline{j_{m+1}}}}{j_{m+1}!}.
\end{align*}
Again, this sum was already evaluated in the proof of Propositions 3.8 in~\cite{GK20_Schlaefli_orthoschemes}. In particular,~\eqref{eq:to_prove_A} follows from~\cite[Eq.~(4.4)]{GK20_Schlaefli_orthoschemes} (with $k$ replaced by $d-2r$, $b$ replaced by $0$, and $j$ replaced by $m+1$).
This completes the proof.
\end{proof}

\subsection{Expected size functionals \texorpdfstring{$Z_{j,k}$}{Z\_\{j,k\}}: Proof of Theorems~\ref{theorem:gen_angle_sums_quermass_A} and~\ref{theorem:gen_angle_sums_quermass_B}}\label{sec:proof_gen_angle_sums}

In this section, we prove Theorems~\ref{theorem:gen_angle_sums_quermass_A} and~\ref{theorem:gen_angle_sums_quermass_B} on the conic quermassintegral sums over the tangent cones of $C_n^A$ and $C_n^B$. We need to state several lemmas first.

\begin{lemma}\label{lem:side_characterization_proj}
Let $s_1,\ldots,s_n\in \R^d$, $n\geq d$, be vectors in general position (meaning that any $d$ of them are linearly independent) and let $C:= \pos\{s_1,\ldots,s_n\}$ be their positive hull.   Fix some $k\in\{1,\ldots,d-1\}$, let $1\le i_1<\ldots<i_k\le n$ be any indices and define $M:=\lin^\perp\{s_{i_1},\ldots,s_{i_k}\}$.
Then,
\begin{align}\label{eq:equivalence_proj_walk_special}
\pos\{s_{i_1},\ldots,s_{i_k}\}\in\cF_k(C)\;\;\Leftrightarrow\;\; C|M \neq M.
\end{align}
\end{lemma}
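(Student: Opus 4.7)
My plan is to translate both sides of the equivalence into statements about the orthogonal projection $\pi:\R^d\to M$, and then exploit the general position hypothesis via Remark~\ref{remark:pointed}.

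First I would set up the projection picture. Since $\pi(s_{i_l})=0$ for $l=1,\dots,k$, we have $C|M=\pi(C)=\pos\{\pi(s_j):j\notin\{i_1,\dots,i_k\}\}$, a positive hull of $n-k$ vectors inside the $(d-k)$-dimensional space $M$. The preliminary step is to show these projected vectors are themselves in general position in $M$: if $\sum_l\alpha_l\pi(s_{j_l})=0$ for some $d-k$ of them, then $\sum_l\alpha_ls_{j_l}\in M^\perp=\lin\{s_{i_1},\dots,s_{i_k}\}$, which forces $d$ of the original vectors $s_{j_1},\dots,s_{j_{d-k}},s_{i_1},\dots,s_{i_k}$ to be linearly dependent, contradicting general position. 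By Remark~\ref{remark:pointed} applied inside $M$, this dichotomy holds: $C|M$ is either pointed or equal to $M$. As a bonus from general position, any $k+1\le d$ of the $s_j$'s are linearly independent, so $\pi(s_j)\ne 0$ for $j\notin\{i_1,\dots,i_k\}$, and the $s_{i_l}$'s are themselves linearly independent, so $\dim F=k$.

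For the direction ($\Rightarrow$), suppose $F:=\pos\{s_{i_1},\dots,s_{i_k}\}$ is a $k$-face of $C$. By definition of a face of a cone, there exists $v\ne 0$ with $\langle v,s_j\rangle\le 0$ for all $j$ and $F=v^\perp\cap C$. Because $F\subseteq v^\perp$, $v$ annihilates $\lin F$, so $v\in M\setminus\{0\}$. Since $\langle v,\cdot\rangle$ is constant on $M^\perp$-cosets, $\langle v,\pi(s_j)\rangle=\langle v,s_j\rangle\le 0$ for every $j$, so $v$ is a nonzero element of the dual (in $M$) of $\pi(C)$; hence $\pi(C)\ne M$.

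For the direction ($\Leftarrow$), assume $C|M\ne M$. By the dichotomy above, $\pi(C)$ is a pointed cone inside $M$, so its dual cone in $M$ has nonempty interior. Choose $v$ in the relative interior of that dual; then $v\in M\setminus\{0\}$ and $\langle v,\pi(s_j)\rangle<0$ for every $j\notin\{i_1,\dots,i_k\}$ (using that $\pi(s_j)\ne 0$ there). Consequently $\langle v,s_j\rangle<0$ for $j\notin\{i_1,\dots,i_k\}$ while $\langle v,s_{i_l}\rangle=0$ for $l=1,\dots,k$. Therefore $v^\perp$ is a supporting hyperplane of $C$ with $v^\perp\cap C=\pos\{s_{i_1},\dots,s_{i_k}\}=F$, which exhibits $F$ as a face of $C$, necessarily of dimension $k$.

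The only slightly delicate point is the general-position transfer to $M$ and the existence of a strictly interior functional in the dual of the pointed cone $\pi(C)$; once those are in hand the two implications are essentially bookkeeping. I do not foresee any genuine obstacle beyond organizing these observations cleanly.
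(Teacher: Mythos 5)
Your proof is correct and follows essentially the same route as the paper: project onto $M$, transfer general position to the projected generators, and invoke the pointed-or-subspace dichotomy of Remark~\ref{remark:pointed}. The only (harmless) variation is in the reverse direction, where you take $v$ in the relative interior of the dual of $C|M$ to get strict inequalities $\langle v,s_j\rangle<0$ directly, whereas the paper takes a supporting hyperplane at the $0$-face of $C|M$ and rules out extra generators on it by the same general-position fact.
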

\begin{proof}
First suppose that $\pos\{s_{i_1},\ldots,s_{i_k}\}\in\cF_k(C)$.
Thus, there is a supporting hyperplane $H=u^\perp$ for some $u\in\R^d\backslash\{0\}$ such that $u^-:=\{y\in \R^d:\langle y, u\rangle \leq 0\} \supseteq C$ and $H\cap C=\pos\{s_{i_1},\ldots,s_{i_k}\}$.
This implies that $u\in M$, and thus, for all $x\in C$
\begin{align*}
\langle u,(x|M)\rangle=\langle u,(x|M) -x\rangle+\langle u,x\rangle =\langle u,x\rangle \le 0.
\end{align*}
This implies $C|M\neq M$.

To prove the converse implication suppose that $C|M\neq M$. Observe that $C|M=\pos \{s_i|M:i\notin\{i_1,\dots,i_k\}\}$ and each $d-k$ of the vectors $s_i|M$, $i\notin\{i_1,\dots,i_k\}$ are linearly independent. Hence,  $C|M\neq M$ implies that $C|M$ is pointed; see Remark~\ref{remark:pointed}. Thus, $\{0\}$ is a $0$-face of $C|M$. This yields a supporting hyperplane $H = u^\bot\cap M \subseteq M$ with $u\in M$ such that $H\cap (C|M) =\{0\}$ and $u^-\cap M \supseteq  C|M$. We claim that $u^\perp = H + M^\bot \in G(d,d-1)$ is a supporting hyperplane of $C$.
Indeed, we have
$$
u^- = (u^-\cap M) + M^\bot \supseteq (C|M)+M^\perp\supseteq C.
$$
It remains to show that $u^\bot \cap C = \pos\{s_{i_1},\dots,s_{i_k}\}$.
Clearly, $u^\bot\cap C\supseteq \pos\{s_{i_1},\dots,s_{i_k}\}$. Assume that this inclusion is strict.   Since $u^\bot$ is a supporting hyperplane of $C$, $u^\bot\cap C$ defines a face of $C$. Thus, by the same argument as in Remark~\ref{remark:faces_simplicial}, there is an index $j\in\{1,\dots,n\}\backslash\{i_1,\dots,i_k\}$ such that $s_j\in u^\bot\cap C$. But this implies $s_j|M\in (C|M)\cap H=\{0\}$ and therefore $s_j\in M^\perp$. This is a contradiction to the general position assumption since $s_{i_1},\dots,s_{i_k}$ also belong to the $k$-dimensional linear subspace $M^\bot$, and they are all linearly independent. This completes the proof.
\end{proof}

\begin{lemma}\label{lemma:projection_walk}
Let $S_1,\ldots,S_n$ be a random walk in $\R^d$ whose increments $X_1,\ldots,X_n$ satisfy assumptions $\textup{(GP)}$ and $\textup{($\pm$Ex)}$. Fix some $k\in\{1,\ldots,d-1\}$, let $1\le i_1<\ldots<i_k\le n$ be any indices and define $M:=\lin^\perp\{S_{i_1},\ldots,S_{i_k}\}$. Then, outside an event of probability $0$, it holds that
\begin{align}\label{eq:equivalence_proj_walk}
\pos\{S_{i_1},\ldots,S_{i_k}\}\in\cF_k(C_n^B)\;\;\Leftrightarrow\;\; C_n^B|M\neq M.
\end{align}
Also, projecting $S_1,\ldots,S_n$ on $M$ yields $k$ random bridges and one random walk in $M$ in the sense of Section~\ref{sec:prob_positive_hulls}. More precisely, the increments of the $k$ random bridges are given by
\begin{align}\label{eq:proj_bridges}
&Y_1^{(1)}=X_{1}|M, \;\; Y_2^{(1)}=X_{2}|M, \;\; \ldots,\;\; Y_{i_1}^{(1)}=X_{i_1}|M,\notag\\
&\hspace*{4cm}\vdots\\
&Y_1^{(k)}=X_{i_{k-1}+1}|M, \;\; Y_2^{(k)}=X_{i_{k-1}+2}|M, \;\; \ldots, \;\; Y_{i_k-i_{k-1}}^{(k)}=X_{i_{k}}|M,\notag
\end{align}
while the increments of the random walk are given by
\begin{align}\label{eq:proj_walk}
X_1^{(1)}=X_{i_{k}+1}|M,\;\; X_2^{(1)}=X_{i_{k}+2}|M, \;\; \ldots,\;\; X_{n-i_k}^{(1)}=X_{n}|M.
\end{align}
The random bridges and the random walk in $M$ corresponding to these increments satisfy the exchangeability
condition~\eqref{eq:exchangeability_joint_walks_bridges} and the general position assumption of Theorem~\ref{theorem:absorpt_prob_joint_convex hull}. The latter means  that each $d-k$ of the vectors $S_i|M$, $i\in\{1,\dots,n\}\backslash\{i_1,\dots,i_k\}$ are linearly independent a.s.
\end{lemma}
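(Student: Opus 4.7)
The plan is to handle the three assertions in sequence. For the equivalence~\eqref{eq:equivalence_proj_walk}, I would simply appeal to the deterministic Lemma~\ref{lem:side_characterization_proj}, applied pointwise to realizations of $S_1,\ldots,S_n$. Assumption~(GP) guarantees that, outside an event of probability zero, any $d$ of the $S_i$'s are linearly independent, which is precisely the general-position hypothesis required by that lemma.

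For the claim that projecting onto $M$ produces $k$ bridges followed by one walk, the key observation is that, by the definition of $M$, we have $S_{i_j}\in M^{\perp}$ and hence $S_{i_j}|M=0$ for every $j\in\{1,\ldots,k\}$. Setting $i_0:=0$, the sum of the projected increments inside the $j$-th block therefore equals $(S_{i_j}-S_{i_{j-1}})|M = 0$, which is the bridge property~(Br) for each of the first $k$ blocks. The remaining increments $X_{i_k+1}|M,\ldots,X_n|M$ are unconstrained and form the single random walk.

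The step I expect to be the main obstacle is verifying the symmetry assumption~\eqref{eq:exchangeability_joint_walks_bridges} for the projected increments, because the subspace $M$ onto which we project depends on $X_1,\ldots,X_n$. The idea is to exploit the fact that permuting \emph{within} the first $k$ blocks, and signed-permuting within the last block, leaves the partial sums $S_{i_1},\ldots,S_{i_k}$ (and hence $M$) invariant. Concretely, pick permutations $\pi^{(j)}$ of the $j$-th block for $j=1,\ldots,k+1$ and signs $\epsilon_{i_k+1},\ldots,\epsilon_n\in\{\pm1\}$, set $\epsilon_i:=1$ for $i\le i_k$, and let $X_i'$ denote the correspondingly rearranged increments with partial sums $S_i'$. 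Since blocks $1,\ldots,j$ together form $\{1,\ldots,i_j\}$, one checks that $S_{i_j}'=S_{i_j}$ for $j=1,\ldots,k$, so $M(X')=M(X)=:M$. Assumption $(\pm\textup{Ex})$ gives $(X_1',\ldots,X_n')\eqdistr(X_1,\ldots,X_n)$, and applying the measurable map $(x_1,\ldots,x_n)\mapsto(x_1|M(x),\ldots,x_n|M(x))$ to both sides yields the desired invariance of the projected increments.

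Finally, the general-position requirement of Theorem~\ref{theorem:absorpt_prob_joint_convex hull}---that any $d-k$ of the projected partial sums $S_i|M$ with $i\notin\{i_1,\ldots,i_k\}$ are linearly independent---I would obtain by pulling back. A non-trivial linear relation $\sum_\ell\lambda_\ell(S_{j_\ell}|M)=0$ is equivalent to $\sum_\ell\lambda_\ell S_{j_\ell}\in M^{\perp}=\lin\{S_{i_1},\ldots,S_{i_k}\}$, which exhibits a non-trivial linear dependence among $d$ vectors from the list $S_1,\ldots,S_n$, an event of probability zero by~(GP).
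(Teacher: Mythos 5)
Your proposal is correct and follows essentially the same route as the paper: the equivalence~\eqref{eq:equivalence_proj_walk} is obtained exactly as in the paper by specializing Lemma~\ref{lem:side_characterization_proj}, while the remaining assertions (the block sums vanishing on $M$, the invariance of $M$ under within-block permutations and signed permutations of the last block, and the pull-back of linear dependences to $M^\perp=\lin\{S_{i_1},\ldots,S_{i_k}\}$) are precisely the arguments the paper delegates to the proof of Theorem~1.6 in~\cite{KVZ17}. You have simply written out in full what the paper cites.
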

\begin{proof}
The equivalence~\eqref{eq:equivalence_proj_walk} is a special case of Lemma~\ref{lem:side_characterization_proj}.
The other assertions were shown in the proof of Theorem~1.6 of~\cite{KVZ17}.
\end{proof}

A similar claim in the $A$-case follows from Theorem~1.11 from~\cite{KVZ17} and reads as follows.

\begin{lemma}\label{lemma:projection_bridge}
Let $S_1,\ldots,S_n$ be a random bridge in $\R^d$ whose increments $X_1,\ldots,X_n$ satisfy assumptions $\textup{(Br)}$, $\textup{(GP')}$ and $\textup{(Ex)}$. Fix some $k\in\{1,\ldots,d-1\}$, let $1\le i_1<\ldots<i_k\le n-1$ be any indices and define $M:=\lin^\perp\{S_{i_1},\ldots,S_{i_k}\}$. Then, outside an event of probability $0$ it holds that
\begin{align}\label{eq:equivalence_proj_bridge}
\pos\{S_{i_1},\ldots,S_{i_k}\}\in\cF_k(C_n^A)\;\;\Leftrightarrow\;\; C_n^A|M\neq M.
\end{align}
Also, projecting $S_1,\ldots,S_n$ on $M$ yields $k+1$ random bridges in $M$ in the sense of
Section~\ref{sec:prob_positive_hulls}. More precisely, the increments of these random bridges are given by
\begin{align*}
&Y_1^{(1)}=X_{1}|M,\;\; Y_2^{(1)}=X_{2}|M, \;\; \ldots,\;\; Y_{i_1}^{(1)}=X_{i_1}|M,\notag\\
&\hspace*{4cm}\vdots\\
&Y_1^{(k)}=X_{i_{k-1}+1}|M, \;\; Y_2^{(k)}=X_{i_{k-1}+2}|M, \;\; \ldots,\;\; Y_{i_k-i_{k-1}}^{(k)}=X_{i_{k}}|M,\notag,\\
&Y_1^{(k+1)}=X_{i_{k}+1}|M, \;\; Y_2^{(k+1)}=X_{i_{k}+2}|M, \;\; \ldots,\;\; Y_{n-i_k}^{(k+1)}=X_{n}|M.
\end{align*}
The random bridges in $M$ corresponding to these increments satisfy the exchangeability
condition~\eqref{eq:exchangeability_joint_walks_bridges} and the general position assumption of Theorem~\ref{theorem:absorpt_prob_joint_convex hull}. The latter means  that each $d-k$ of the vectors $S_i|M$, $i\in\{1,\dots,n-1\}\backslash\{i_1,\dots,i_k\}$ are linearly independent a.s.
\end{lemma}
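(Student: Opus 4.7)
The plan is to mirror the strategy that, according to the excerpt, yields Lemma~\ref{lemma:projection_walk} from Theorem~1.11 of~\cite{KVZ17}, specialized to the bridge setting where the final $B$-type walk-block of~\eqref{eq:proj_walk} is replaced by one more $A$-type bridge-block (since now $S_n=0$ a.s.\ by~\textup{(Br)}).

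First, I would dispose of the equivalence~\eqref{eq:equivalence_proj_bridge}. This is a direct application of Lemma~\ref{lem:side_characterization_proj} to the spanning vectors $S_1,\ldots,S_{n-1}$ of $C_n^A$, which by~\textup{(GP')} are in general position with probability one; the conclusion of Lemma~\ref{lem:side_characterization_proj} is then literally~\eqref{eq:equivalence_proj_bridge}. Next, to identify the projection as $k+1$ bridges, I would use that $S_{i_1},\dots,S_{i_k}\in M^\perp$ by definition of $M$ and $S_n=0$ by~\textup{(Br)}. Setting $i_0:=0$ and $i_{k+1}:=n$, this shows that the projected trajectory $S_0|M,S_1|M,\ldots,S_n|M$ vanishes precisely at the times $i_0,i_1,\ldots,i_{k+1}$, so within every block $\{i_{j-1}+1,\ldots,i_j\}$ the projected increments $X_{i_{j-1}+1}|M,\ldots,X_{i_j}|M$ sum to zero in $M$. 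This yields exactly the $k+1$ bridges listed in the statement.

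For the exchangeability condition~\eqref{eq:exchangeability_joint_walks_bridges} (restricted to its bridge factors, since there is no walk-factor here), the key observation is that any permutation $\sigma$ of $(X_1,\ldots,X_n)$ that acts \emph{separately} within each of the $k+1$ blocks leaves the partial sums $S_{i_1},\ldots,S_{i_k},S_n$ pointwise unchanged, hence leaves $M$ unchanged. By~\textup{(Ex)} the joint distribution of $(X_1,\ldots,X_n)$ is invariant under $\sigma$, so the distribution of the projected block-increments is invariant under arbitrary block-wise permutations, which is exactly the bridge-factor part of~\eqref{eq:exchangeability_joint_walks_bridges}.

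Finally, I would verify the general position claim. Assume that some $d-k$ of the projected vectors, say $S_{j_1}|M,\ldots,S_{j_{d-k}}|M$ with $j_1<\dots<j_{d-k}$ in $\{1,\dots,n-1\}\setminus\{i_1,\dots,i_k\}$, were linearly dependent. Then there would exist scalars $\lambda_1,\ldots,\lambda_{d-k}$, not all zero, such that $\sum_\ell\lambda_\ell S_{j_\ell}\in M^\perp=\lin\{S_{i_1},\ldots,S_{i_k}\}$, giving a non-trivial linear relation among at most $d$ vectors picked from $S_1,\ldots,S_{n-1}$, which contradicts~\textup{(GP')} almost surely. This last step is the only one requiring any care, and is fully parallel to the argument already used in the proof of Corollary~\ref{cor:prob_walkbridge_intersects_V}; I do not anticipate a genuine obstacle.
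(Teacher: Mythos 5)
Your proposal is correct and follows essentially the same route as the paper: the equivalence \eqref{eq:equivalence_proj_bridge} is obtained as a special case of Lemma~\ref{lem:side_characterization_proj} (using that $S_1,\ldots,S_{n-1}$ are a.s.\ in general position by \textup{(GP')}), while the identification of the $k+1$ bridges, the block-wise exchangeability, and the general position of the projections are exactly the assertions the paper delegates to the proof of Theorem~1.11 of the cited reference, and your explicit verifications of them (block sums vanish since $S_{i_1},\ldots,S_{i_k},S_n$ project to $0$; block-wise permutations fix $M$; a dependence among $d-k$ projections would force a nontrivial relation among $d$ of the $S_i$'s) are sound.
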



\begin{proof}[Proof of Theorem~\ref{theorem:gen_angle_sums_quermass_B}]
We divide this proof into three separate cases, the first case being $j=0$ and $k\in\{0,\ldots,d\}$, the second case $1\le j\le k\le d-1$, and the last case $j\in\{1,\ldots,d\}$ and $k=d$.

\vspace*{2mm}
\noindent
\textsc{Case 1.}
For $j=0$ and $k\in\{0,\ldots,d\}$, we are interested in the quantity
\begin{align*}
\E\sum_{F\in\cF_0(C_n^B)}U_k\big(T_F(C_n^B)\big)=\E\big[ U_k(C_n^B)\1_{\{C_n^B\neq\R^d\}}\big].
\end{align*}
By Remark~\ref{remark:pointed}, $C_n^B$ is pointed (and, in particular, not a linear subspace)
on the event that $C_n^B\neq\R^d$. For $k=d$, we have
\begin{align*}
\E\big[U_d(C_n^B)\1_{\{C_n^B\neq\R^d\}}\big]
=\frac 12 \E\big[\1_{\{C_n^B\cap\{0\}\neq\{0\}\}}\1_{\{C_n^B\neq\R^d\}}\big]=0.
\end{align*}
This proves Theorem~\ref{theorem:gen_angle_sums_quermass_B} in the case $j=0$, $k=d$. For $j=0$ and $k\in\{0,\ldots,d-1\}$, we obtain that
\begin{align}\label{eq:exp_ind_1}
\E[U_k(C_n^B)\1_{\{C_n^B\neq\R^d\}}]
&	=\frac 12\E\left[\int_{G(d,d-k)}\1_{\{C_n^B\cap V\neq\{0\}\}}\,\nu_{d-k}(\dint V)\1_{\{C_n^B\neq\R^d\}}\right]\notag\\
&	=\frac 12\int_{G(d,d-k)}\bP\big[C_n^B\cap V\neq\{0\},C_n^B\neq\R^d\big]\,\nu_{d-k}(\dint V)\notag\\
&	=\frac 12\int_{G(d,d-k)}\bP\big[C_n^B\cap V\neq\{0\}\big]\nu_{d-k}(\dint V)-\frac 12\bP[C_n^B=\R^d],
\end{align}
where we used that
$
\{C_n^B=\R^d\big\}\subseteq \{C_n^B\cap V\neq \{0\}\}.
$
Next, we can apply Remark~\ref{remark:prob_walkbridge_intersects_V}, and Equation~\eqref{eq:abs_prob_walk} to arrive at
\begin{align}\label{eq:U_l_with_ind}
\E[U_k(C_n^B)\1_{\{C_n^B\neq\R^d\}}]
&	=\frac{1}{2^nn!}\left(\sum_{r=0}^\infty\stirlingb{n}{k+2r+1}-\sum_{r=0}^\infty\stirlingb{n}{d+2r+1}\right)\notag\\
&	=\frac{1}{2^nn!}\left(\sum_{r=0}^\infty\stirlingb{n}{d-2r-1}-\sum_{r=0}^\infty\stirlingb{n}{k-2r-1}\right),
\end{align}
where in the second equality we used that
$$
\stirlingb n1+\stirlingb n3+\ldots=\stirlingb n0+\stirlingb n2+\ldots=2^{n-1}n!.
$$
Note that Remark~\ref{remark:prob_walkbridge_intersects_V} was applicable since $\nu_{d-k}$-a.e.~$V\in G(d,d-k)$ is in general position with respect to any fixed linear subspace; see~\cite[Lemma~13.2.1]{SW08}.

\vspace*{2mm}
\noindent
\textsc{Case 2.}
Now, let $1\le j\le k\le d-1$ and let $F\in\cF_j(C_n^B)$ be a $j$-face of $C_n^B$. Then, $F=\pos\{S_{i_1},\ldots,S_{i_j}\}$ for some $1\le i_1<\ldots<i_j\le n$ due to Remark~\ref{remark:faces_simplicial}. Let $M:=\lin^\perp\{S_{i_1},\ldots,S_{i_j}\}$. It follows from~\eqref{eq:tangent_cone_decomposition} that the tangent cone of $C_n^B$ at $F$ is given by
\begin{align*}
T_F\big(C_n^B\big)=M^\perp + \big(C_n^B|M\big).
\end{align*}
Using this relation, we obtain
\begin{align*}
\E\sum_{F\in\cF_j(C_n^B)}U_k\big(T_F(C_n^B)\big)
	&=\E\sum_{1\le i_1<\ldots<i_j\le n}U_k\big(T_{\pos\{S_{i_1},\ldots,S_{i_j}\}}\big(C_n^B\big)\big)
		\1_{\{\pos\{S_{i_1},\ldots,S_{i_j}\}\in\cF_j(C_n^B)\}}\\
	&=\E\sum_{1\le i_1<\ldots<i_j\le n}U_k\big(M^\perp+(C_n^B|M))\1_{\{\pos\{S_{i_1},\ldots,S_{i_j}\}\in\cF_j(C_n^B)\}}.
\end{align*}
For every cone $C\subset M$, the conic Crofton formula~\eqref{eq:crofton} and the properties of the conic intrinsic volumes~\cite[(2.9)]{amelunxen_edge} yield
$$
U_k(M^\perp+C)
=
\sum_{r=0}^\infty \upsilon_{k + 2r + 1} (M^\perp+C)
=
\sum_{r=0}^\infty \upsilon_{k-j + 2r + 1} (C)
=
U_{k-j} (C).
$$
In particular, $U_k(M^\perp+(C_n^B|M))=U_{k-j}(C_n^B|M)$ holds true. Using~\eqref{eq:equivalence_proj_walk}  yields
\begin{align}
&	\E\sum_{F\in\cF_j(C_n^B)}U_k\big(T_F(C_n^B)\big)\notag\\
&	\quad=\E\sum_{1\le i_1<\ldots<i_j\le n}U_{k-j}\big(C_n^B|M\big)\1_{\{C_n^B|M\neq M\}}\notag\\
&	\quad=\E\sum_{1\le i_1<\ldots<i_j\le n}\frac 12\int_{G(d,d-k+j)}\1_{\{(C_n^B|M)\cap V\neq\{0\}\}}\,\nu_{d-k+j}(\dint V) \1_{\{C_n^B|M\neq M\}}\notag\\
&	\quad=\frac 12\int_{G(d,d-k+j)}\sum_{1\le i_1<\ldots<i_j\le n}\P\big[(C_n^B|M)\cap V\neq\{0\}\big]\,\nu_{d-k+j}(\dint V)\label{eq:sums_gamma}\\
&	\quad\quad-\frac 12\int_{G(d,d-k+j)}\sum_{1\le i_1<\ldots<i_j\le n}\P\big[(C_n^B|M)\cap V\neq\{0\}, (C_n^B|M)= M\big]\,\nu_{d-k+j}(\dint V).\label{eq:sums_gamma1}
\end{align}
Lemma~\ref{lemma:projection_walk} yields that $C_n^B|M$ is the joint positive hull of $j$ random bridges (of lengths $i_1,i_2-i_1,\ldots,i_j-i_{j-1}$) and $1$ random walk (of length $n-i_j$) in $M$. Thus, using Corollary~\ref{cor:prob_walkbridge_intersects_V}, the term in~\eqref{eq:sums_gamma} can be rewritten as follows:
\begin{align*}
\sum_{1\le i_1<\ldots<i_j\le n}\sum_{r=0}^\infty \frac{P_{i_1,i_2-i_1,\ldots,i_j-i_{j-1}}^{(n)}(k-j+2r+1)}{i_1!(i_2-i_1)!\cdot\ldots\cdot (i_j-i_{j-1})!(n-i_j)!2^{n-i_j}},
\end{align*}
where we used that $\nu_{d-k+j}$-a.e. linear subspace $V$ satisfies the general position assumption from Corollary~\ref{cor:prob_walkbridge_intersects_V}.
Here, the coefficients $P_{l_1,\ldots,l_j} ^{(n)}(r)$ are as defined in Corollary~\ref{cor:face_probabilities_pos_walk}.
We can simplify the above sums to get
\begin{align*}
\sum_{r=0}^\infty\:\sum_{\substack{l_1,\ldots,l_j\in\N,l_{j+1}\in\N_0\\l_1+\ldots+l_{j+1}=n}}\frac{P^{(n)}_{l_1,\ldots,l_j}(k-j+2r+1)}{l_1!\cdot\ldots\cdot l_{j+1}!2^{l_{j+1}}}=\frac{j!}{2^{n-j}n!}\sum_{r=0}^\infty\stirlingb n{k+2r+1}\stirlingsecb{k+2r+1}j,
\end{align*}
where the last equation follows from~\eqref{eq:to_prove2_size_funct}.

Since $k<d$, we have $\dim (M\cap V)=d-k>0$ for $\nu_{d-k+j}$-a.e~$V\in G(d,d-k+j)$. Thus, together with the equivalence~\eqref{eq:equivalence_proj_walk}, the term in~\eqref{eq:sums_gamma1} can be rewritten as
\begin{align*}
\frac 12\sum_{1\le i_1<\ldots<i_j\le n} \P\big[(C_n^B|M) = M\big]
& =\frac 12\sum_{1\le i_1<\ldots<i_j\le n} \P[\pos\{S_{i_1},\ldots,S_{i_j}\}\notin\cF_j(C_n^B)]\\
&	=	\sum_{\substack{l_1,\ldots,l_j\in\N,l_{j+1}\in\N_0:\\l_1+\ldots+l_{j+1}=n}}\sum_{r=0}^\infty\frac{P_{l_1,\ldots,l_j} ^{(n)}(d-j+2r+1)}{l_1!\cdot\ldots\cdot l_{j+1}! 2^{l_{j+1}}}		\\
&	=\frac{j!}{2^{n-j}n!}\sum_{r=0}^\infty \stirlingb n{d+2r+1}\stirlingsecb{d+2r+1}j.
\end{align*}
Note that the second equality follows from Corollary~\ref{cor:face_probabilities_pos_walk} and the last equality follows again from~\eqref{eq:to_prove2_size_funct}. Taking all into consideration, this yields
\begin{align*}
&\E\sum_{F\in\cF_j(C_n^B)}U_k\big(T_F(C_n^B)\big)\\
&\quad=\frac{j!}{2^{n-j} n!}\left(\sum_{r=0}^\infty\stirlingb n{k+2r+1}\stirlingsecb {k+2r+1}j
-
\sum_{r=0}^\infty\stirlingb n{d+2r+1}\stirlingsecb {d+2r+1}j\right).
\end{align*}
To complete the proof of Theorem~\ref{theorem:gen_angle_sums_quermass_B} for $1\le j\le k\le d-1$, we rewrite this  using the fact that
\begin{align*}
\sum_{k=0}^n(-1)^k\stirlingb nk\stirlingsecb kj=0, \quad \sum_{k=0}^n\stirlingb nk\stirlingsecb kj=\frac{2^nn!}{2^jj!}\binom nj,
\end{align*}
which was proven, for example, in~\cite[Corollary~3.13, Eq.~(2.23)]{GK20_Schlaefli_orthoschemes}.

\vspace*{2mm}
\noindent
\textsc{Case 3.}
For $k=d$ and $j\in\{1,\ldots,d-1\}$ the claim follows from the fact that $U_d(C) = 0$ for every cone $C\subseteq \R^d$.
\end{proof}

The proof of Theorem~\ref{theorem:gen_angle_sums_quermass_A} is similar to that of Theorem~\ref{theorem:gen_angle_sums_quermass_B} and we will only sketch it.

\begin{proof}[Proof of Theorem~\ref{theorem:gen_angle_sums_quermass_A}]
We divide this proof into the same three different cases as in the $B$-case.

\vspace*{2mm}
\noindent
\textsc{Case 1.}
For $j=0$ and $k=d$, the claim is easily checked. For $j=0$ and $k\in\{0,\ldots,d-1\}$, we follow the arguments of the $B$-case to obtain
\begin{align*}
\E\sum_{F\in\cF_0(C_n^A)}U_k\big(T_F(C_n^A)\big)
&	=\E\big[ U_k(C_n^A)\1_{\{C_n^A\neq\R^d\}}\big]\\
&	=\frac 12\int_{G(d,d-k)}\bP\big[C_n^A\cap V\neq\{0\}\big]\nu_{d-k}(\dint V)-\frac 12\bP[C_n^A=\R^d]
\end{align*}
Applying~\eqref{eq:abs_prob_bridge} and Remark~\ref{remark:prob_walkbridge_intersects_V}, for $\nu_{d-k}$-a.e. $V$ yields
\begin{align*}
\E\sum_{F\in\cF_0(C_n^A)}U_k\big(T_F(C_n^A)\big)=\frac{1}{n!}\left(\sum_{r=1}^\infty\stirling n{l+2r}-\sum_{r=1}^\infty \stirling n{d+2r}\right),
\end{align*}
which proves the claim for $C_n^A$.

\vspace*{2mm}
\noindent
\textsc{Case 2.}
Now, let $1\le j\le k\le d-1$ and let $F\in\cF_j(C_n^A)$ be a $j$-face of $C_n^A$. Then, $F=\pos\{S_{i_1},\ldots,S_{i_j}\}$ for some $1\le i_1<\ldots<i_j\le n-1$. Also, for $M:=\lin^\perp\{S_{i_1},\ldots,S_{i_j}\}$, the tangent cone of $C_n^A$ at $F$ is again given as the orthogonal sum
\begin{align*}
T_F\big(C_n^A\big)=M^\perp + \big(C_n^A|M\big).
\end{align*}
This yields
\begin{align*}
\E\sum_{F\in\cF_j(C_n^A)}U_k\big(T_F(C_n^A)\big)
	&=\E\sum_{1\le i_1<\ldots<i_j\le n-1}U_k\big(M^\perp+(C_n^A|M)\big)\1_{\{\pos\{S_{i_1},\ldots,S_{i_j}\}\in\cF_j(C_n^A)\}}.
\end{align*}
On the event that $\pos\{S_{i_1},\ldots,S_{i_j}\}\in\cF_j(C_n^A)$, which is equal to the event $\{C_n^A|M\neq M\}$ by Lemma~\ref{lemma:projection_bridge}, the tangent cone $T_{\{\pos\{S_{i_1},\ldots,S_{i_j}\}}(C_n^A)$ is not a linear subspace. Thus, we can use the reasoning of the $B$-case to obtain
\begin{align}
&\E\sum_{F\in\cF_j(C_n^A)}U_k\big(T_F(C_n^A)\big)\notag\\
&	\quad=\E\sum_{1\le i_1<\ldots<i_j\le n-1}U_{k-j}\big(C_n^A|M\big)\1_{\{C_n^A|M\neq M\}}\notag\\
&	\quad=\int_{G(d,d-k+j)}\sum_{1\le i_1<\ldots<i_j\le n-1}\frac 12\P\big[(C_n^A|M)\cap V\neq\{0\}\big]\,\nu_{d-k+j}(\dint V)\label{eq:sums_gamma_A}\\
&	\quad\quad-\int_{G(d,d-k+j)}\sum_{1\le i_1<\ldots<i_j\le n-1}\frac 12\P\big[(C_n^A|M)\cap V\neq\{0\}, C_n^A|M= M\big]\,\nu_{d-k+j}(\dint V).\label{eq:sums_gamma1_A}
\end{align}
Due to Lemma~\ref{lemma:projection_bridge}, the projection $C_n^A|M$ is the joint positive hull of $j+1$ random bridges (of lengths $i_1,i_2-i_1,\ldots,i_j-i_{j-1}, n-i_j$) in $M$. Thus, using Corollary~\ref{cor:prob_walkbridge_intersects_V}, the integral in line~\eqref{eq:sums_gamma_A} can be rewritten as
\begin{align*}
\sum_{1\le i_1<\ldots<i_j\le n-1}\:\sum_{r=0}^\infty \frac{Q_{i_1,i_2-i_1,\ldots,i_j-i_{j-1}}^{(n)}(k-j+2r+1)}{i_1!(i_2-i_1)!\cdot\ldots\cdot (i_j-i_{j-1})!(n-i_j)!},
\end{align*}
for $\nu_{d-k+j}$-a.e. $V$. Here, the coefficients $Q_{j_1,\ldots,j_k} ^{(n)}(r)$ are as defined in Corollary~\ref{cor:face_probabilities_pos_bridge}.
We can simplify the above sums to get
\begin{align*}
\sum_{r=0}^\infty\:\sum_{\substack{l_1,\ldots,l_{j+1}\in\N\\l_1+\ldots+l_{j+1}=n}}\frac{Q^{(n)}_{l_1,\ldots,l_j}(k-j+2r+1)}{l_1!\cdot\ldots\cdot l_{j+1}!}=\frac{(j+1)!}{n!}\sum_{r=1}^\infty\stirling n{k+2r}\stirlingsec{k+2r}{j+1}
\end{align*}
where the last step follows from~\eqref{eq:to_prove_A}. In the same way as in the $B$-case, we can argue that the integral in line~\eqref{eq:sums_gamma1_A} simplifies to
\begin{align*}
\frac 12\sum_{1\le i_1<\ldots<i_j\le n-1} \P[\pos\{S_{i_1},\ldots,S_{i_j}\}\notin\cF_j(C_n^A)]
&	=	\sum_{\substack{l_1,\ldots,l_{j+1}\in\N:\\l_1+\ldots+l_{j+1}=n}}\:\sum_{r=0}^\infty\frac{Q_{l_1,\ldots,l_j} ^{(n)}(d-j+2r+1)}{l_1!\cdot\ldots\cdot l_{j+1}! }		\\
&	=\frac{(j+1)!}{n!}\sum_{r=1}^\infty \stirling n{d+2r}\stirlingsec{d+2r}{j+1},
\end{align*}
where the first equality follows from Corollary~\ref{cor:face_probabilities_pos_bridge} and the second equality again from~\eqref{eq:to_prove_A}.
Taking all into consideration, this yields
\begin{align*}
\E\sum_{F\in\cF_j(C_n^A)}U_k\big(T_F(C_n^A)\big)
&	=\frac{(j+1)!}{n!}\bigg(\sum_{r=1}^\infty\stirling n{k+2r}\stirlingsec{k+2r}{j+1}-\sum_{r=1}^\infty \stirling n{d+2r}\stirlingsec{d+2r}{j+1}\bigg)\\
&	=\frac{(j+1)!}{n!}\bigg(\sum_{r=0}^\infty \stirling n{d-2r}\stirlingsec{d-2r}{j+1}-\sum_{r=0}^\infty\stirling n{k-2r}\stirlingsec{k-2r}{j+1}\bigg),
\end{align*}
where the last equation follows from the identities
\begin{align*}
\sum_{k=0}^n(-1)^k\stirling nk\stirlingsec k{j+1}=0, \quad \sum_{k=0}^n\stirling nk\stirlingsec k{j+1}=\frac{n!}{(j+1)!}\binom{n-1}{j},
\end{align*}
which follow from~\cite[Corollary~3.13, Eq.~(2.23)]{GK20_Schlaefli_orthoschemes}. This proves the claim for $C_n^A$.

\vspace*{2mm}
\noindent
\textsc{Case 3.}
For $k=d$ and $j\in\{1,\ldots,d\}$, we obtain
\begin{align*}
\E\sum_{F\in\cF_j(C_n^B)}U_d\big(T_F(C_n^B)\big)
	=0,\quad \E\sum_{F\in\cF_j(\widetilde C_n^B)}U_d\big(T_F(\widetilde C_n^B)\big)
	=0
\end{align*}
in the same way as in the $B$-case. This completes the proof.
\end{proof}

\subsection{Conic intrinsic volume sums: Proofs of Corollaries~\ref{cor:Y_intvol_A}, \ref{corollary:gen_angle_sums_int_vol_A}, \ref{cor:Y_intvol_B} and~\ref{corollary:gen_angle_sums_int_vol_B}}
\label{sec:conic_intrinsic_vol_sums_proofs}
Corollaries~\ref{corollary:gen_angle_sums_int_vol_A} and~\ref{corollary:gen_angle_sums_int_vol_B} on the generalized intrinsic volume sums follow from Theorems~\ref{theorem:gen_angle_sums_quermass_A} and~\ref{theorem:gen_angle_sums_quermass_B}, respectively. For the proof, we use the relations
\begin{align}\label{eq:conic_Crofton}
\upsilon_d(C)=U_{d-1}(C),\quad \upsilon_{d-1}(C)=U_{d-2}(C),\quad\upsilon_k(C)=U_{k-1}(C)-U_{k+1}(C),
\end{align}
for $k\in\{1,\ldots,d-2\}$,
which hold for every cone $C\subseteq \R^d$ and follow from the conic Crofton formula~\eqref{eq:crofton}. We only prove the $B$-case (Corollary~\ref{corollary:gen_angle_sums_int_vol_B}) since the $A$-case is analogous.

\begin{proof}[Proof of Corollary~\ref{corollary:gen_angle_sums_int_vol_B}]
The case $k\in\{j+1,\ldots,d-1\}$ directly follows from~\eqref{eq:conic_Crofton} together with Theorem~\ref{theorem:gen_angle_sums_quermass_B}. For $k=d$, we obtain
\begin{align*}
&\E\sum_{F\in\cF_j(C_n^B)}\upsilon_d\big(T_F(C_n^B)\big)\\
&	\quad=\E\sum_{F\in\cF_j(C_n^B)}U_{d-1}\big(T_F(C_n^B)\big)\\
&	\quad=\frac{j!}{2^{n-j}n!}\bigg(\sum_{r=0}^\infty\stirlingb n{d-2r-1}\stirlingsecb{d-2r-1}{j}-\sum_{r=0}^\infty\stirlingb n{d-2r-2}\stirlingsecb{d-2r-2}{j}\bigg)\\
&	\quad=\frac{j!}{2^{n-j}n!}\sum_{r=0}^\infty (-1)^{r}\stirlingb n{d-1-r}\stirlingsecb{d-1-r}{j}.
\end{align*}
For $k=j\in\{0,\ldots,d-1\}$, we use that $T_F(C_n^B)$ has a $j$-dimensional lineality space, defined as the linear subspace of maximal dimension contained in $T_F(C_n^B)$. Thus, we have $\upsilon_j(T_F(C_n^B))+\ldots+\upsilon_d(T_F(C_n^B))=1$, which yields
\begin{align*}
&\E\sum_{F\in\cF_j(C_n^B)}\upsilon_j\big(T_F(C_n^B)\big)\\
&	\quad=\E f_j(C_n^B)-\sum_{k=j+1}^d\E\sum_{F\in\cF_j(C_n^B)}\upsilon_k\big(T_F(C_n^B)\big)\\
&	\quad=\frac{2\cdot j!}{2^{n-j}n!}\sum_{r=0}^\infty\stirlingb n{d-2r-1}\stirlingsecb {d-2r-1}{j}-\frac{j!}{2^{n-j}n!}\sum_{k=j+1}^{d-1}\stirlingb n{k}\stirlingsecb{k}{j}\\
&	\quad\quad-\frac{j!}{2^{n-j}n!}\sum_{r=0}^\infty (-1)^{r}\stirlingb n{d-1-r}\stirlingsecb{d-1-r}{j}\\
&	\quad=\frac{j!}{2^{n-j}n!}\stirlingb nj \stirlingsecb jj.
\end{align*}
Note that the second equation follows from the previous cases and Corollary~\ref{cor:exp_faces_A}.
\end{proof}

Corollaries~\ref{cor:Y_intvol_A} and~\ref{cor:Y_intvol_B} follow from Theorems~\ref{theorem:exp_size_functionals_A} and~\ref{theorem:exp_size_functionals_B}, respectively, using the relation~\eqref{eq:conic_Crofton} as above.

\subsection{Sufficient condition for general position}\label{sec:proof_sufficient_condition}
We now state and prove a natural condition on the distribution of $(X_1,\ldots,X_n)$ under which the general position assumptions
$\textup{(GP)}$ and, in some sense, also $\textup{(GP')}$ hold. In~\cite[Lemma~5.1]{GK2020_WeylCones}, we proved that the same condition implies the general position assumptions $\textup{(GP*)}$ and $\textup{(GP**)}$.

\begin{lemma}\label{lemma:sufficent_condition}
Let $\mu$ be a $\sigma$-finite Borel measure on $\R^d$ that assigns measure zero to each affine hyperplane, i.e.~ each $(d-1)$-dimensional affine subspace. Furthermore, let $X_1,\ldots,X_n$ be random vectors in $\R^d$ having a joint $\mu^n$-density on $(\R^d)^n$. Then $X_1,\ldots,X_n$ satisfy the general position assumption $\textup{(GP)}$ provided $n\geq d$.

Furthermore, let $\widetilde X_i:=X_i-\frac 1n (X_1+\ldots+X_n)$, for $i=1,\ldots,n$ and let $n\geq d+1$.  Then, the partial sums
$$
\widetilde S_i := \widetilde X_1 + \ldots +  \widetilde X_i = S_i-\frac i	n S_n,\quad i=1,\ldots,n,
$$
satisfy the general position assumption $\textup{(GP')}$, and also, the bridge property $\textup{(Br)}$.
\end{lemma}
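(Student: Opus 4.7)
For $\textup{(GP)}$, I would proceed by induction on $d\ge 1$. The base case $n\ge 1$, $d=1$ asks that $S_{i_1}\ne 0$ a.s.\ for any $1\le i_1\le n$; conditioning on $(X_j)_{j\ne i_1}$ and using the joint $\mu^n$-density, the event $\{S_{i_1}=0\}$ places $X_{i_1}$ at the single point $-\sum_{j<i_1}X_j$, which is contained in any hyperplane through it and hence has $\mu$-measure zero, yielding the claim by Fubini. For the inductive step, the hypothesis gives linear independence of $S_{i_1},\ldots,S_{i_{d-1}}$ almost surely; after conditioning on $X_1,\ldots,X_{i_d-1}$, the span $H:=\lin\{S_{i_1},\ldots,S_{i_{d-1}}\}$ becomes a deterministic linear hyperplane on the inductive-hypothesis event, and $\{S_{i_d}\in H\}$ translates into $\{X_{i_d}\in H-S_{i_d-1}\}$, an affine hyperplane, which is $\mu$-null. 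Fubini closes the induction. The bridge property is immediate from $\widetilde S_n=S_n-(n/n)S_n=0$.

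For $\textup{(GP')}$, the plan is to reformulate the linear-independence condition in terms of affine independence of ``block averages''. Fix $1\le i_1<\ldots<i_d\le n-1$, set $i_0:=0$, $i_{d+1}:=n$, $w_k:=i_k-i_{k-1}\ge 1$, $\Sigma_k:=X_{i_{k-1}+1}+\ldots+X_{i_k}$, and $\bar X^{(k)}:=\Sigma_k/w_k$ for $k=1,\ldots,d+1$, so that the $\bar X^{(k)}$ depend on pairwise disjoint blocks of the variables. I claim the equivalence
\begin{equation*}
\widetilde S_{i_1},\ldots,\widetilde S_{i_d}\text{ are linearly dependent in }\R^d\iff\bar X^{(1)},\ldots,\bar X^{(d+1)}\text{ are affinely dependent in }\R^d.
\end{equation*}
Granted this, affine independence of the $d+1$ block averages follows by a second induction on the number $k\in\{1,\ldots,d+1\}$ of averages included. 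Given that $\bar X^{(1)},\ldots,\bar X^{(k)}$ are a.s.\ affinely independent, take $j^*:=i_{k+1}$ (the last index in block $k+1$) and condition on $(X_j)_{j\ne j^*}$: the first $k$ averages are deterministic, spanning an affine subspace $A\subset\R^d$ of dimension $k-1\le d-1$ that is contained in a hyperplane, while $\bar X^{(k+1)}$ is an affine function of $X_{j^*}$ with nonzero slope $1/w_{k+1}$, so $\{\bar X^{(k+1)}\in A\}$ places $X_{j^*}$ in an affine subspace of $\R^d$ of dimension at most $d-1$, which is $\mu$-null by hypothesis.

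The main obstacle is establishing the equivalence above. Expanding using $S_{i_m}=\Sigma_1+\ldots+\Sigma_m$ and $S_n=\Sigma_1+\ldots+\Sigma_{d+1}$ gives
\begin{equation*}
\sum_{m=1}^d\lambda_m\widetilde S_{i_m}=\sum_{k=1}^{d+1}c^{(k)}\Sigma_k,\qquad c^{(k)}=\sum_{m=k}^d\lambda_m-\frac{1}{n}\sum_{m=1}^d\lambda_mi_m\quad(k\le d),\qquad c^{(d+1)}=-\frac{1}{n}\sum_{m=1}^d\lambda_mi_m.
\end{equation*}
A direct check shows that the linear map $\lambda\mapsto(c^{(1)},\ldots,c^{(d+1)})$ is a bijection from $\R^d$ onto the hyperplane $\{c\in\R^{d+1}:\sum_k w_kc^{(k)}=0\}$ (injectivity follows from $\lambda_m=c^{(m)}-c^{(m+1)}$, and the image lies in the hyperplane by a direct computation; both sides have dimension $d$). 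Hence the existence of nontrivial $\lambda$ with $\sum_m\lambda_m\widetilde S_{i_m}=0$ is equivalent to the existence of nontrivial $c$ satisfying both $\sum_k c^{(k)}\Sigma_k=0$ and $\sum_k w_kc^{(k)}=0$, i.e., to linear dependence of the vectors $(\Sigma_k,w_k)^\top\in\R^{d+1}$, or equivalently, after dividing by the positive weights $w_k$, to affine dependence of $\bar X^{(1)},\ldots,\bar X^{(d+1)}$ in $\R^d$. Once this equivalence is in place, the second induction above completes the proof.
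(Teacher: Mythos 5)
Your proposal is correct and follows essentially the same route as the paper: both proofs reduce $\textup{(GP')}$ to affine independence of the block averages $(X_{i_{k-1}+1}+\ldots+X_{i_k})/(i_k-i_{k-1})$ via the weighted relation $\sum_k w_k c^{(k)}=0$, and both dispose of the resulting events by conditioning on all increments outside one block and using that affine hyperplanes are $\mu$-null (the paper packages this last step as its Lemma~\ref{lemma:prob_in_hyperplane_0}). Your add-one-point-at-a-time induction and the explicit coefficient bijection are a tidy reorganization of the paper's ``largest index with nonzero coefficient'' selection, not a genuinely different argument.
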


Note that if we additionally assume $X_1,\ldots,X_n$ to be exchangeable, then also the increments $\widetilde X_1,\ldots,\widetilde X_n$ are exchangeable. In order to prove Lemma~\ref{lemma:sufficent_condition}, we need another result.

\begin{lemma}\label{lemma:prob_in_hyperplane_0}
Let $X_1,\ldots,X_n$ be random vectors in $\R^d$ as above. Then, for any affine hyperplane $H\subseteq \R^d$ and any numbers $a_1,\ldots,a_n\in\R$ (not all equal to $0$), it holds that
\begin{align*}
\bP\big[a_1X_1+\ldots+a_nX_n\in H\big]=0.
\end{align*}
\end{lemma}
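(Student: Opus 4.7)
The plan is to reduce the statement to the hypothesis that $\mu$ charges no affine hyperplane, by conditioning on all but one of the $X_i$'s and applying Fubini. After relabeling indices, I may assume $a_n\neq 0$. Write the affine hyperplane as $H=\{y\in\R^d:\langle v,y\rangle=c\}$ for some $v\in\R^d\setminus\{0\}$ and $c\in\R$. Then the event $\{a_1X_1+\ldots+a_nX_n\in H\}$ is the event
\[
\Big\langle v,X_n\Big\rangle=\frac{1}{a_n}\bigg(c-\sum_{i=1}^{n-1}a_i\langle v,X_i\rangle\bigg),
\]
that is, $X_n$ must lie in the (random) affine hyperplane
\[
H(X_1,\ldots,X_{n-1}):=\Big\{y\in\R^d:\langle v,y\rangle=a_n^{-1}\big(c-\textstyle\sum_{i=1}^{n-1}a_i\langle v,X_i\rangle\big)\Big\},
\]
which is indeed an affine hyperplane since $v\neq 0$.

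Next I would apply Fubini's theorem to the joint $\mu^n$-density $f$ of $(X_1,\ldots,X_n)$, writing
\[
\bP\big[a_1X_1+\ldots+a_nX_n\in H\big]=\int_{(\R^d)^{n-1}}\bigg(\int_{H(x_1,\ldots,x_{n-1})}f(x_1,\ldots,x_n)\,\mu(\dint x_n)\bigg)\mu^{n-1}(\dint(x_1,\ldots,x_{n-1})).
\]
For each fixed $(x_1,\ldots,x_{n-1})$ the set $H(x_1,\ldots,x_{n-1})$ is an affine hyperplane in $\R^d$, hence has $\mu$-measure zero by the hypothesis on $\mu$. Therefore the inner integral vanishes identically, and the whole expression equals zero.

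There is essentially no obstacle here; the only small point to verify is that the resulting random hyperplane $H(X_1,\ldots,X_{n-1})$ is genuinely an affine hyperplane of $\R^d$ for every realization of $X_1,\ldots,X_{n-1}$, which is clear from $v\neq 0$. Everything else is a routine application of Fubini and the hypothesis that $\mu$ charges no affine hyperplane.
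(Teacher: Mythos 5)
Your proof is correct and follows essentially the same route as the paper: both arguments single out one index with nonzero coefficient, integrate out the remaining variables (you via Fubini on the joint $\mu^n$-density, the paper via the conditional $\mu$-density, which amounts to the same thing), and observe that the remaining variable is forced into an affine hyperplane, which is $\mu$-null by hypothesis.
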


\begin{proof}
Suppose that $H=\{v\in\R^d:\langle v,w\rangle = z\}$ for some $w\in\R^d\backslash\{0\}$ and $z\in\R$. Now, since $(X_1,\ldots,X_n)$ has a joint $\mu^n$-density, the conditional $\mu$-density of $X_j$, conditioned on the event that
\begin{align*}
(X_1,\ldots,X_{j-1},X_{j+1},\ldots,X_n)=(x_1,\ldots,x_{j-1},x_{j+1},\ldots,x_n),
\end{align*}
exists and we denote it by $f(x_j|x^{(j)})$, for any $j\in\{1,\ldots,n\}$. For ease of notation we denote the vector on the left-hand side by $X^{(j)}$ and the vector on the right hand side by $x^{(j)}$. Now, choose $j\in\{1,\ldots,n\}$ such that $a_j\neq 0$. We obtain
\begin{align*}
\bP\big[a_1X_1+\ldots+a_nX_n\in H\big]
&=\bP\bigg[\Big\langle \sum_{i\neq j}a_iX_i,w\Big\rangle+\langle X_j,w\rangle=z\bigg]\\
&=\int_{(\R^d){n-1}}\bP\bigg[\langle X_j,w\rangle=z-\Big\langle \sum_{i\neq j}a_ix_i,w\Big\rangle\Big|X^{(j)}=x^{(j)}\bigg]\,\mu^{n-1}(\dint(x^{(j)})).
\end{align*}
Defining $y(x^{(j)}):=-\langle \sum_{i\neq j}a_ix_i,w\rangle$ and the affine hyperplane $H'(x^{(j)}):=\{v\in\R^d:\langle v,w\rangle = z-y(x^{(j)})\}$ yields
\begin{align*}
\bP\big[a_1X_1+\ldots+a_nX_n\in H\big]
=\int_{(\R^d){n-1}}\int_{H'(x^{(j)})}f\big(x_j|x^{(j)}\big)\,\mu(\dint x_j) \,\mu^{n-1}(\dint(x^{(j)}))=0,
\end{align*}
where we used that $\mu$ assigns measure $0$ to any affine hyperplane. This completes the proof.
\end{proof}

\begin{proof}[Proof of Lemma~\ref{lemma:sufficent_condition}]
In order to prove that $X_1,\ldots,X_n$, or rather their partial sums $S_1,\ldots,S_n$, satisfy the general position assumption (GP), we fix indices $1\le i_1<\ldots<i_d\le n$ and aim to prove that  $S_{i_1},\ldots,S_{i_d}$ are linearly independent with  probability $1$. If $S_{i_1},\ldots,S_{i_d}$ are linearly dependent, there exist numbers $\lambda_1,\ldots,\lambda_d\in\R$ that do not vanish simultaneously and satisfy
\begin{align*}
0=\lambda_{1}S_{i_1}+\ldots+\lambda_{d}S_{i_d}.
\end{align*}
Equivalently, we can say that there exist numbers $\lambda_1,\ldots,\lambda_d$ that do not vanish simultaneously such that
\begin{multline} \label{eq:blocks}
0=(\lambda_{1}+\ldots+\lambda_{d})(X_1+\ldots+X_{i_1})+(\lambda_{2}+\ldots+\lambda_{d})(X_{i_1+1}+\ldots+X_{i_2})\\+\ldots+\lambda_{d}(X_{i_{d-1}+1}+\ldots+X_{i_d}).
\end{multline}
Defining $k=\max\{j\in\{1,\ldots,d\}:\lambda_{j}\neq 0\}$, the last equation can be rewritten as
\begin{multline*}
X_{i_{k-1}+1}+\ldots+X_{i_k}=-\frac{\lambda_{1}+\ldots+\lambda_{k}}{\lambda_{k}}(X_1+\ldots+X_{i_1})-\frac{\lambda_{2}+\ldots+\lambda_{k}}{\lambda_{k}}(X_{i_1+1}+\ldots+X_{i_2}) \\-\ldots-\frac{\lambda_{{k-1}}+\lambda_{k}}{\lambda_{k}}(X_{i_{k-2}+1}+\ldots+X_{i_{k-1}}).
\end{multline*}
Thus, it follows that
\begin{align}\label{eq:event_probab_zero}
X_{i_{k-1}+1}+\ldots+X_{i_k}\in \lin\{X_1+\ldots+X_{i_1},\ldots,X_{i_{k-2}+1}+\ldots+X_{i_{k-1}}\},
\end{align}
where the dimension of the linear hull on the right-hand side is not greater than $k-1\le d-1$.
Conditioned on $X_1,\ldots, X_{i_{k-1}}$, the tuple $(X_{i_{k-1}+1},\ldots,X_{i_k})$ has a joint $\mu^{i_k-i_{k-1}}$-density on $(\R^d)^{i_k-i_{k-1}}$ and we can apply Lemma~\ref{lemma:prob_in_hyperplane_0}. Integrating, we see that the probability of the event~\eqref{eq:event_probab_zero} is $0$.

To prove the second claim, we follow the above reasoning with $X_i$ replaced by $\widetilde X_i$, for $i=1,\dots,n$.
To this end, suppose that there are indices $1\le i_1<\ldots<i_d\le n-1$ such that $\widetilde S_{i_1},\dots,\widetilde S_{i_d}$ are linearly dependent. By inserting the $\widetilde X_i$'s instead of the $X_i$'s into~\eqref{eq:blocks} and applying some elementary transformations, we infer that there exist $\lambda_1,\dots,\lambda_d\in\R$ that do not vanish simultaneously such that
\begin{align*}
0=(X_1+\ldots+X_{i_1})\mu_1+(X_{i_1+1}+\ldots+X_{i_2})\mu_2+\ldots+(X_{i_{d}+1}+\ldots+X_n)\mu_{d+1},
\end{align*}
where
\begin{align*}
\mu_{d+1}:=-\frac{\lambda_1i_1+\ldots+\lambda_di_d}{n},
\quad
\mu_i:=\lambda_i+\lambda_{i+1}+\ldots+\lambda_d-\frac{\lambda_1i_1+\ldots+\lambda_di_d}{n},\quad i=1,\dots,d.
\end{align*}
It is easy to check that the coefficients $\mu_1,\dots,\mu_{d+1}$ do not vanish simultaneously and satisfy the linear relation
\begin{align*}
a_1\mu_1+a_2\mu_2+\ldots+a_{d+1}\mu_{d+1}=0
\end{align*}
for the positive numbers $a_1:=i_1,a_2:=i_2-i_1,\dots,a_d:=i_d-i_{d-1},a_{d+1}:=n-i_d$. This yields
\begin{align*}
0=\frac{X_1+\ldots+X_{i_1}}{a_1}\cdot\mu_1a_1+\frac{X_{i_1+1}+\ldots+X_{i_2}}{a_2}\cdot\mu_2a_2+\ldots+\frac{X_{i_{d}+1}+\ldots+X_n}{a_{d+1}}\cdot\mu_{d+1}a_{d+1},
\end{align*}
which implies that there is a $j\in\{1,\dots,d+1\}$ such that
\begin{align}\label{eq:event}
\frac{X_{i_{j-1}+1}+\ldots+X_{i_j}}{a_j}\in\aff\left\{\frac{X_{i_{k-1}+1}+\ldots+X_{i_k}}{a_k}:k\in\{1,\dots,d+1\}\backslash\{j\} \right\},
\end{align}
with the convention that $i_0:=1$ and $i_{d+1}:=n$. Note that $\aff M$ denotes the affine hull of a set $M\subseteq\R^d$ and in our case the affine hull on the right-hand side is at most of dimension $d-1$. Following the arguments of the previous case, we can condition on all values of $X_i$ for $i\in\{1,\dots,n\}\backslash\{i_{j-1}+1,\dots,i_j\}$ and then apply Lemma~\ref{lemma:prob_in_hyperplane_0} to conclude that the event in~\eqref{eq:event} has probability $0$. This completes the proof.
\end{proof}

\section*{Acknowledgement}
Supported by the German Research Foundation under Germany's Excellence Strategy  EXC 2044 -- 390685587, \textit{Mathematics M\"unster: Dynamics - Geometry - Structure}  and by the DFG priority program SPP 2265 \textit{Random Geometric Systems}.

\vspace*{0.5cm}
\bibliography{bib}
\bibliographystyle{abbrv}



\end{document}